 \numberwithin{equation}{section}
\newtheorem{Theorem}{Theorem}[section]
\newtheorem{thm}{Theorem}[section]
\newtheorem{prop}[Theorem]{Proposition}
\newtheorem{lem}[Theorem]{Lemma}
\newtheorem{cor}[Theorem]{Corollary}
\theoremstyle{definition}
\newtheorem{defn}[Theorem]{Definition}
\newtheorem{rem}[thm]{Remark}
\newcommand{\Imp}{\mbox{$\Longrightarrow$}}
\newcommand{\Iff}{\mbox{$\Longleftrightarrow$}}
\def\ie{\emph{i.e.}}
\def\Ie{\emph{I.e.}}
\def\pes{\emph{e.g.}}
\def\Pes{\emph{E.g.}}
\def\wlg{without loss of generality}
\def\A{{\mathcal{A}}}
\def\B{{\mathscr{B}}}
\def\D{{\mathcal D}}
\def\E{{\mathcal E}}
\def\F{{\mathcal F}}
\def\P{{\mathcal P}}
\def\S{{\mathscr S}}
\def\U{{\mathcal U}}
\def\QQ{{\mathcal{Q}}}
\def\Ql{{\mathcal{Q}}}
\def\FF{{\mathbb F}}
\def\EE{{\mathbb E}}
\def\N{{\mathbb N}}
\def\R{{\,\mathbb R}}
\def\Z{{\mathbb Z}}
\def\UU{{\mathbb{U}}}
\def\UU{{\mathbb{U}}}
\def\kN{\N^{*} }
\def\kZ{\Z{}^{*} }
\def\Ak{{\mathfrak A}}
\def\Nk{{\mathfrak N}}
\def\ag{{\alpha}}
\def\dg{{\delta}}
\def\bg{{\beta}}
\def\eg{{\varepsilon}}
\def\fg{{\varphi}}
\def\og{{\omega}}
\def\Og{{\Omega}}
\def\cg{{\gamma}}
\def\sg{{\sigma}}
\def\Sg{{\Sigma}}
\def\thg{{\theta}}
\def\zg{{\zeta}}
\def\ah{\aleph}
\def\max{\mbox{\rm max}\;}
\def\+#1{\vec{#1}}
\def\sp{{\mathsf {(sp)}}}
\def\fpp{{\mathsf {(fpp)}}}
\def\up{{\mathsf {(up)}}}
\def\dsp{{\mathsf {(dsp)}}}
\def\xib{{\mathbf \xi}}
\def\zgb{{\mathbf \zg}}
\def\ab{{\mathbf{a}}}
\def\xb{{\mathbf{x}}}
\def\yb{{\mathbf{y}}}
\def\zb{{\mathbf{z}}}
\def\ik{{\mathfrak{i}}}
\def\ak{{\mathfrak{a}}}
\def\sk{{\mathfrak{s}}}
\def\nk{{\mathfrak{n}}}
\def\Ik{{\mathfrak{I}}}
\def\Lk{{\mathfrak{L}}}
\def\Nk{{\mathfrak{N}}}
\def\*{\times}
\def\0{\emptyset}
\def\7{\setminus}
\def\_{\overline}
\def\<{\prec}
\def\o+{\bigoplus}
\def\incl{\subseteq}
\def\linc{\supseteq}
\def\pincl{\subset}
\def\qincl{\sqsubseteq}
\def\qinclp{\sqsubset}
\def\lincq{\sqsupseteq}
\def\la{\langle}
\def\ra{\rangle}
\def\ua{\uparrow}
\def\qed{\hfill $\Box$}
\def\RA{$\mathsf{RA} $}
\def\LA{$\mathsf{LA} $}
\def\CA{$\mathsf{CA} $}
\def\DA{$\mathsf{DA} $}
\def\PA{$\mathsf{PA} $}
\def\SA{$\mathsf{SA} $}
\def\EP{\textsf{(EP)}}
\def\HP{\textsf{(HP)}}
\def\FIP{\textsf{FIP}}
\begin{document}
%\begin{frontmatter}

%% Title, authors and addresses

%% use the tnoteref command within \title for footnotes;
%% use the tnotetext command for the associated footnote;
%% use the fnref command within \author or \address for footnotes;
%% use the fntext command for the associated footnote;
%% use the corref command within \author for corresponding author footnotes;
%% use the cortext command for the associated footnote;
%% use the ead command for the email address,
%% and the form \ead[url] for the home page:
%%
%% \title{Title\tnoteref{label1}}
%% \tnotetext[label1]{}
%% \author{Name\corref{cor1}\fnref{label2}}
%% \ead{email address}
%% \ead[url]{home page}
%% \fntext[label2]{}
%% \cortext[cor1]{}
%% \address{Address\fnref{label3}}
%% \fntext[label3]{}

\title{The Euclidean numbers}
%\tnoteref{f}}
%\tnotetext[f]{This research did not receive any specific grant from funding agencies in the public, commercial, or not-for-profit sectors.}

%% use optional labels to link authors explicitly to addresses:
%% \author[label1,label2]{<author name>}
%% \address[label1]{<address>}
%% \address[label2]{<address>}

\author{Vieri Benci\\
{\small Dipartimento di Matematica}\\
{\small Universit\`{a} di Pisa, Italy.}\\
{\small {\tt benci@dma.unipi.it}}
\and
Lorenzo Bresolin\\
{\small Scuola Normale Superiore, Pisa}\\
%{\small Universit\`{a} di Pisa, Italy.}\\
{\small {\tt lor.breso98@libero.it}}
\and
Marco Forti\\
{\small Dipartimento di Matematica}\\
{\small Universit\`{a} di Pisa, Italy.}\\
{\small {\tt marco.forti@unipi.it}}}

\date{}
\maketitle
%    \subjclass is required.
%\subjclass[2010]{Primary 26E35, 03H05;  secondary 03C20,   03E65, 12L99.}
%
%\date{}
%
%\dedicatory{}
%
%%    "Communicated by" -- provide editor's name; required.
%\commby{}

%    Abstract is required.

\begin{abstract}
We introduce axiomatically a Nonarchimedean field $\EE$,   called the field of the Euclidean numbers, where a transfinite sum 
 indicized  by   ordinal numbers less than the first inaccessible $\Og$ is defined.  
Thanks to this sum, $\EE$ becomes a saturated  hyperreal field isomorphic to the so called Keisler field of cardinality $\Og$, and there is  a natural  isomorphic embedding into $\EE$ of the semiring $\Og$ equipped by the natural ordinal sum and product. Moreover a notion of limit is introduced so as to  obtain that transfinite sums be  limits of suitable $\Og$-sequences of their finite subsums.

Finally a notion of numerosity satisfying all Euclidean common notions is given, whose values are nonnegative nonstandard integers of ${\mathbb{E}}$. Then $\EE$  can be charachterized as the hyperreal field
generated by the real numbers  together with the semiring of numerosities
 (and
this explains the name ``Euclidean" numbers).

\end{abstract}

\begin{description}
\item[ Keywords:] 
%% keywords here, in the form: keyword \sep keyword
Nonstandard Analysis,  Nonarchimedean fields,   Euclidean numerosities

%% MSC codes here, in the form: \MSC code \sep code
%% or \MSC[2008] code \sep code (2000 is the default)
\item[MSC[2010\!\!]]: 26E35, 03H05, 03C20,   03E65, 12L99
\end{description}

%\begin{description}
%\item[ Keywords:] 
%%% keywords here, in the form: keyword \sep keyword
%Nonstandard Analysis,  Nonarchimedean fields,   Euclidean numerosities
%
%%% MSC codes here, in the form: \MSC code \sep code
%%% or \MSC[2008] code \sep code (2000 is the default)
%\item[MSC[2010\!\!]]: 26E35, 03H05, 03C20,   03E65, 12L99
%\end{description}

%\maketitle
% 
% %    Text of article.

%\tableofcontents{}

\section*{Introduction}

In this paper we introduce a numeric field denoted by $\mathbb{E}$, which we
name the \emph{field
of the Euclidean numbers}. The theory
of the Euclidean numbers combines the Cantorian theory of ordinal numbers
with Non Standard Analysis (NSA). 

From the algebraic point of view, the Eucliean numbers are a non-Archimedean
field with a supplementary structure (the \emph{Euclidean structure}),
which characterizes it. This Euclidean structure is introduced axiomatically
by the operation of \emph{transfinite sum}: more precisely, in 
Section \ref{field} we 
introduce sums of the type $\sum_{k}a_{k}$
where the $a_{k}s$ are real numbers, while $k$ varies  
in $\Omega$, the set of 
all ordinals smaller than the first inaccessible cardinal. 

It is worth pointing out that the inaccessibility of $\Og$ is used only in order that $\EE$ be \emph{saturated}, as real closed field; all the remaining properties of $\EE$ stated in this paper remain valid if $\Og$ is any strong limit cardinal, so that the set $\B(\Og,\R)$ of all real-valued \emph{periodic} functions on $\Og$ has size $\Og$.

We give in 
Subsection \ref{axiom}
five natural axioms that rule the behaviour of these transfinite sums. 
We list the main peculiarities of the Euclidean numbers that we
deduce from the axioms on transfinite sums:
\begin{itemize}
\item Every Euclidean number can be obtained as a transfinite sum of real
numbers;
 more generally, a transfinite sum of Euclidean numbers is well defined
in Subsection \ref{axiom}, and it can be obtained as \emph{limit of 
ordinal-indexed partial sums}, under an appropriate notion of limit,
%is given when suitable topologies are given to 
%$\EE$ and to
%the ordinals 
given in Subsection \ref{count}.
\item Any accessible 
ordinal 
 $\alpha\in\Og$ can be identified with the transfinite
sum of $\alpha$ ones in $\EE$; this identification
is consistent with the so called \emph{natural ordinal operations} $+$ and $\cdot$
(see Subsection \ref{ordeucl}), so
the field of the Euclidean numbers can be considered as a sort of \emph{natural}
extension of the \emph{semiring} of the (accessible) ordinal numbers.
\item The field $\mathbb{E}$ is saturated with respect to the order 
relation, actually 
it is the \emph{unique saturated real closed field having the 
cardinality of the first inaccessible cardinal} $\Omega$ (see Subsection \ref{sat}).
This property implies that every ordered field having cardinality
less than or equal to $\Omega$ is (isomorphic to) a subfield of $\mathbb{E}$.
A model of the field $\EE$ is obtained in Section \ref{constr} as a limit 
 ultrapower of $\R$ modulo a suitable ultrafilter $\U$ on $\Og$. 
\item The Euclidean numbers are a \emph{hyperreal field}; more precisely 
$\EE$
is isomorphic to the hyperreal Keisler field introduced in \cite{keisler76};
the Keisler field is the unique saturated (in the sense of NSA) hyperreal
field having the cardinality of $\Omega$ (see Subsection \ref{Keis}).
 \item The Euclidean numbers are strictly related to the notion of 
\emph{numerosity}, introduced in \cite{benci95b,BDNlab,BDNF1} and devloped
in \cite{BDNFuniv,DNFtup,QSU,FM}, so as to save the five Euclidean common notions
(see Section \ref{num}). In fact, $\EE$ can be charachterized as
the hyperreal field generated by the real numbers and the semiring
of numerosities, provided that the numerosity is defined on \emph{a coherent
family of labelled sets containing the accessible ordinal 
numbers}.
The numerosity theory provided by the Euclidean numbers satisfies
the following properties which alltogether are not shared by other numerosity
theories (see Subsections  \ref{saveu}-\ref{eunum}):
\begin{itemize}
\item each set (in this theory) is equinumerous to a set of ordinals,
\item the set of numerosities $\mathfrak{N}$ is a positive subsemiring
of nonstandard integers, that generates the whole $\mathbb{Z}^{*}$.
\end{itemize}

\end{itemize}

We have chosen to call $\EE$ the field of the \textit{Euclidean}
numbers for two main reasons: firstly, this field arises inside of a numerosity
theory (including all bounded subsets of $\Omega$), whose main aim is to
save all the Euclidean common notions, including the fifth one:
%\vspace{-0.5cm}
%\begin{center}
    \emph{~``The whole is greater
    than the part",~ }
%\end{center}
% \vspace{-0.2cm}
 \noindent
in contrast to the Cantorian theory of cardinal numbers. 
    
The second reason is that, in our opinion, the field $\EE$ describes the
Euclidean continuum better than the \emph{real field} $\R$, at least 
when looking for a \emph{set theoretic} interpretation
of the \emph{Euclidean geometry}. This last point has been 
dealt with in \cite{BF16,BF19} and will be shortly outlined in the Appendix.

\section{Notation and preliminary notions}
\selectlanguage{english}

Let $\Omega$ be the least (strongly) \emph{inaccessible} cardinal. Or 
better, taking into account that in what follows the ordinals 
are  viewed \emph{``\`a la Cantor''} as \emph{atomic} numbers, which 
are not 
\emph{identified with}, but rather considered as the 
\emph{order types of} the set of all the smaller ordinals, 
let $\Og$ be the 
\emph{set of all accessible ordinals}, and in general let $\Og_\ag=\{\bg\in\Og\mid \bg<\ag\}$.

\subsection{Operations on $\Omega$}\label{opord}

Since we use the ordinary symbols $\cdot$ and $+$ for the operations on 
the Euclidean numbers that we shall define in section \ref{axiom}, 
and among them we shall include the ordinals, the \emph{usual ordinal 
mutiplication  and addition} on $\Omega$ will be denoted by $\odot$ and $\oplus$, 
respectively, whereas $\cdot$ and $+$ will correspond to the so 
called \emph{natural operations}, which coincide with the operations in $\EE$, see Theorem \ref{ordeu} below. 

Given ordinals $\ag, j\in\Omega$, there exist uniquely determined 
ordinals $k\in\Omega$ ed $s<2^{j},$ such that
\[
\ag=\left(2^{j}\odot k\right)\oplus s.
\]
%therefore, throughout the paper, we shall adopt the following notation:
%\[
%s=\left\langle x\right\rangle _{j};\,\, k=\left\langle x\right\rangle 
%^{j},
%\]
%so as to have
%\[
%x=\left(2^{j}\odot\left\langle x\right\rangle ^{j}\right)
%\oplus\left\langle x\right\rangle _{j}.
%\]
%

Recall that each ordinal has a unique \emph{base-$2$ normal form}
\[
\ag=\sum_{n=1}^{N}2^{j_{n}}
\]
where  $n_{1}<n_{2}\Rightarrow j
{}_{n_{1}}>j{}_{n_{2}}$.

As we identify the ordinals in $\Og$ with numbers of the field $\EE$, 
we shall simply write the normal form\
$\ag=\sum_{n=1}^{N}2^{j_{n}}$,\ 
%following the usual notation of 
%linear combinations,
independently of the ordering of the exponents. But one has to be 
careful: sum and product agree with the \emph{ordinary ordinal 
operations} only when \emph{the exponents are decreasing (and 
integer coefficients are put on the right side)}. On the other hand, the exponentiation between ordinals  is intended as the usual \emph{ordinal exponentiation}, and so it differs from the \emph{nonstandard extension of the real exponentiation} as defined in $\EE$.

In particular  $2^\og=\og$, and  the power $2^\ag=\og^\ag$ whenever $\ag= \og\odot\ag$. It follows that the fixed points of the function $\ag\mapsto 2^\ag$ are $\og$ and and the so called $\eg$-numbers $\eg$ such that $\og^\eg=\eg$.

\subsection{Finite sets of  ordinals}\label{fin}
The usual \emph{antilexicographic wellordering} of the finite sets of ordinals is defined by 
$$L_1 < L_2\ \  if\  and\ only\ if\ \ \max (L_1 \bigtriangleup L_2 ) \in L_2.$$

In this ordering $2^\ag$ is the order type of the set $\P_{fin}(\Og_\ag)$ of all finite sets of ordinals less than $\ag$, hence the family $\Lk=\P_{fin}(\Og)$ of all finite subsets of $\Og$ can be isomorphically indexed by $\Og$. Therefore we shall denote by $L_\ag$ the $\ag$th set of ordinals, namely
$$L_\ag =\{ \ag_1, ... ,\ag_n\}\ \  for \ \ \ag=\sum_1^n 2^{\ag_i}.$$ 
In particular 
$$L_0=\0,\ \ \ L_{2^\ag}=\{\ag\},\ \ \ and\ \ \ L_{2^\ag+\bg}=\{\ag\}\cup L_\bg\ \ for\ all\ \bg<2^\ag.$$
The order isomorphism $\alpha\mapsto L_\ag$ between $\Og$ and $\Lk$ allows to single out a restriction of the ordinal ordering on $\Og$, that will be basic in the following sections, namely
%the \emph{formal membership}   $\lessdot$  that corresponds to ordinary membership, and 
the \emph{formal inclusion}  $\sqsubseteq$, that
corresponds to ordinary inclusion:
\begin{defn}
%\begin{description}${~}$
%
%  \item[formal membership]  ${~}\ \ $ Given $\ag,\bg\in\Og$ we say that $\bg$ is a \emph{formal member} of $\ag$, written
%  $\bg\fin\ag$, if and only if $\bg\in L_\ag,$ or equivalently
%   if and only if $2^\bg$ appears in the base-$2$ normal form of $\ag$.\\  
%   Hence ${~~}\ \ \ \ \alpha =\sum_{\bg\fin\ag}2^\bg.$
%  
  %\medskip
   %\item [
   \textbf{(formal inclusion)}:${~}$\\
    Given $\ag,\bg\in\Og$ we say that $\ag$ is \emph{formally included} in $\bg$ (written $\ag\qincl\bg$) if and only if $L_\ag\incl L_\bg$.
    %     or equivalently
%  ${~~}  \bg\qincl\ag$ if and only if\ \ $\cg\qincl L_\bg$ implies $ \cg\qincl L_\ag.$ \\
  
  Hence\footnote{
~The name formal inclusion should  also recall that the respective 
base-$2$ normal forms are indeed 
contained one inside of the other one.}
 ${~~} \ \ \ \ 
\ag=\sum_{i\in I}2^{{i}}\ \sqsubseteq\ 
\bg=\sum_{h\in H}2^{{h}}
\Longleftrightarrow\ \, I\incl H.\ 
% \mathrm{and}\
% \og^{j_{n}}\mid  \og^{h_{ m}}.
$
%\end{description}
\end{defn}
\medskip
So 
%the formal members of $\ag$ are the ordinary members of $L_\ag$, while 
the formal inclusion $\bg\qinclp\ag$ reflects the ordinary inclusion between the corresponding finite sets $L_\bg\pincl L_\ag.$
%\footnote{
%~The name formal inclusion should  recall that also the respective 
%base-$2$ normal forms are indeed 
%contained one inside of the other one.}
In particular  the following useful properties hold:
\begin{itemize}
  \item $\,0\sqsubseteq \ag$
for all $\ag\in\Omega$. 
  \item  $|\{\bg\mid \bg\qincl \ag\}|=2^{|L_\ag|}$ 
  %and $|\{\xi\mid \xi\fin\ag\}|=|L_\ag|$ 
is \emph{finite} for all 
    $\ag\in\Omega$;
  \item 
The structure $(\Og,\sqsubseteq)$ is a
a  \emph{lattice} isomorphic to $(\Lk,\incl)$, where \emph{supremum} $\ag\vee\bg$ and \emph{infimum} $\ag\wedge\bg$ are defined by

 $~~~~~~~~~~~~~~~~~~L_{\ag\vee\bg}=L_\ag \cup L_\bg~$\ and\ 
$~L_{\ag\wedge\bg}=L_\ag \cap L_\bg$.
%\begin{itemize}
%  \item 
% 
%$\ag\vee \bg=\min\left\{ \xi\,|\, \xi\sqsupseteq \ag,\, \xi\sqsupseteq \bg\right\}=
%\sum_{n=1}^{N}\omega^{j_{n}}\odot c_{n} \ \ with \ \
%c_{n}=\max(a_{n},b_{n}),$
% \item 
% $a\wedge b=\max\left\{ x\,|\, x\sqsubseteq a,\, 
%x\sqsubseteq b\right\}=
%\sum_{n=1}^{N}\omega^{j_{n}}\odot d_{n} \ \ with \ \
%d_{n}=\min(a_{n},b_{n}).$
%\end{itemize}
 \item for all $h,k,\ag\in\Og$ one has\ $h,k\in L_\ag\ \ \Iff\ \ 2^h\vee 2^k\qincl \ag.$
%According to this definition we have for all $ x\in\Omega$ 
%\[
%x=(\omega^{j}\odot\left\langle x\right\rangle ^{j})\lor
%\left\langle x\right\rangle _{j}\,\ \mathrm{and}\ 
%\ (\omega^{j}\odot\left\langle x\right\rangle ^{j})\land
%\left\langle x\right\rangle _{j}=0.
%\]

%In particular one obtains for all $\ag,\bg\in\Og$ the following useful criterion:
%\begin{description}
%    \item[\textbf{(C)}] 

\item for $\eta>0$ and all $\ag,\bg\in\Og $  one has the following useful criterion:
\begin{description}
    \item[\textbf{(C)}] 
$\ \ k<2^{\eta}\ \ \Imp\ \ 
    (\,2^{\eta}\odot \bg+k\,\qincl 
    \ag\ \ \Iff \    \    2^{\eta}\odot \bg,\, k\ \qincl \ag\,)$ 
\end{description}
\end{itemize}

\smallskip
In order to deal with the Euclidean numbers, we single out  the following class of ordinals (or better of finite sets of ordinals)

\begin{defn}

An ordinal $\ag$ (and the corresponding set $L_\ag$)  is $(\eta,h)$-\emph{complete}  if %the following conditions hold: 
 %\begin{itemize}
 \ for\ all\ $\cg<2^\eta$\ $$\cg\qincl\ag\ \ \Iff\ \ ({2^\eta})^h\odot \cg\qincl \ag,\ \ \textrm{or\ equivalently}\ \ 
k\in L_\ag\ \ \Iff\ \ \eta\odot h\oplus k\in L_\ag$$
%In particular, 
% %\begin{center}
%if $\eta=2^\eta$, then 
%
%$~\ag$ is $(\eta,h)$-complete if and only if
%$~~\forall\bg<\eta\ \big( \bg\in L_\ag\ \Iff\ \eta\odot h\oplus\bg\in L_\ag\big).~~~~$
%\end{center}
\end{defn}
%\noindent
%\Pes, if $\eta=\og$, then
% $\ag$ is $\og$-complete if $\ \ag=\og^2\odot\bg+\og\odot m+m$ for all $\bg\in\Og$ and all $m\in\og$.
%An ordinal $\ag$ (and the corresponding set $L_\ag$)  is $\eta$-\emph{complete}  if %the following conditions hold: 
% %\begin{itemize}
% $$\ for\ all\ \bg<2^\eta\,\ \ \bg\qincl\ag\ \ \Iff\ \ {2^\eta}\odot \bg\qincl \ag.$$
%In particular, 
% %\begin{center}
%if $\eta=2^\eta$, then 
%
%$~\ag$ is $\eta$-complete if and only if
%$~~\forall\bg<\eta\ \big( \bg\in L_\ag\ \Iff\ \eta\oplus\bg\in L_\ag\big).~~~~$
%
%%\end{center}
%\end{defn}
%\noindent
%\Pes, if $\eta=\og$, then
% $\ag$ is $\og$-complete if $\ \ag=\og^2\odot\bg+\og\odot m+m$ for all $\bg\in\Og$ and all $m\in\og$.
 
By criterion $\textbf{(C)}$, the set of the $(\eta,h)$-complete ordinals is 
$$D(\eta,h)=\{\ag\in\Og\mid\forall\cg, k<2^\eta\,\big(\, \cg,k\qincl\ag\  \Iff\ (2^{\eta\odot h}\odot\cg)\vee k\qincl \ag\big)\}.$$
In particular, $D(\eta,h)=\Og$ when $\eta\odot h=0$, and 
$$D(\eta,1)=\{\ag\mid\forall k,\cg<2^\eta\,(2^\eta\odot \cg\oplus k\qincl\ag\ \Iff\ k,\cg\qincl\ag)\}.$$
\begin{lem}\label{compl}
Let $D(\eta,h)$
% $\eg=2^{\eg}$ be an $\eg$-number, let
 be the set of all $(\eta,h)$-complete ordinals, and let $$C(\bg)=
\{\ag\mid \bg\qincl\ag\}= \{\ag\mid L_\bg\incl L_\ag\} $$  be the \emph{cone} over $\bg$ with respect to formal inclusion. 

Then the family
 $~\D=\{D(\eta,h)\mid \eta,h\in\Og\,\}\cup\{C(\bg)\,\mid \bg\in\Og \,\}~$
enjoys the finite intersection property $\emph{ \FIP}.$

Hence $\D$ 
generates a filter $\QQ$ over $\Og$ (and correspondingly over $\Lk$).

\end{lem}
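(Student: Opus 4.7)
The plan is to reduce a finite intersection to a canonical form and then construct a witness explicitly. Since the formal inclusion makes $\Og$ a lattice with $L_{\bg_1\vee\bg_2}=L_{\bg_1}\cup L_{\bg_2}$, we have $C(\bg_1)\cap\cdots\cap C(\bg_m)=C(\bg)$ where $L_\bg=\bigcup_j L_{\bg_j}$; and $D(\eta,0)=\Og$ is trivial. So any finite intersection from $\D$ reduces to $C(\bg)\cap D(\eta_1,h_1)\cap\cdots\cap D(\eta_n,h_n)$ with $h_i\ge 1$, and the task is to exhibit a single $\ag\in\Og$ in this intersection.

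The construction proceeds by equivalence-class closure. Let $\sim$ be the smallest equivalence relation on $\Og$ containing $k\sim\eta_i\odot h_i+k$ for all $i\le n$ and $k<\eta_i$, and set $L_\ag:=\bigcup_{\gamma\in L_\bg}[\gamma]_\sim$, $\ag:=\sum_{\gamma\in L_\ag}2^\gamma$. Since $L_\bg\incl L_\ag$, we have $\bg\qincl\ag$, so $\ag\in C(\bg)$; and the required biconditionals $k\fin\ag\Iff\eta_i\odot h_i+k\fin\ag$ hold automatically, because the two ordinals are $\sim$-equivalent and therefore lie in $L_\ag$ together or not at all. The only point to verify is that $\ag$ itself lies in $\Og$, i.e., that $L_\ag$ is finite, which reduces to showing that each orbit $[\gamma]_\sim$ is finite.

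The finiteness of orbits is the combinatorial heart. Write $a_i:=\eta_i\odot h_i$. Because $\eta_i=2^{\eta_i}$, for every $k<\eta_i$ the ordinal sum $a_i+k$ coincides with the Hessenberg (natural) sum $a_i\oplus k$, and $f_i(k):=a_i+k$ is a bijection of $[0,\eta_i)$ onto $[a_i,a_i\oplus\eta_i)$. A purely forward chain $\delta_0\to f_{j_1}(\delta_0)\to f_{j_2}(\cdot)\to\cdots$ must have strictly increasing $\eta_{j_l}$'s: indeed, each $f_{j_l}$ brings the value into $[a_{j_l},+\infty)\incl[\eta_{j_l},+\infty)$, so the next step requires $\eta_{j_{l+1}}>\eta_{j_l}$. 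Consequently forward chains have length at most $n$. When backward moves $f_i^{-1}(\delta)=\delta\ominus a_i$ are interleaved, commutativity of $\oplus$ shows that every orbit element takes the form $\gamma_0\oplus\bigoplus_{i\in S^+}a_i\ominus\bigoplus_{i\in S^-}a_i$ for some disjoint $S^+,S^-\incl\{1,\ldots,n\}$, a set of at most $3^n$ elements.

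The main obstacle is making the Hessenberg-sum bookkeeping rigorous, in particular verifying that each backward move $f_i^{-1}$ is a well-defined Hessenberg subtraction on orbit elements. This is precisely where the fixed-point condition $\eta_i=2^{\eta_i}$ is crucial: it guarantees that $a_i$ has Cantor-normal-form support in positions $\ge\eta_i$, while any $\delta\in[a_i,a_i\oplus\eta_i)$ carries exactly $a_i$'s digits at those top positions and arbitrary digits below $\eta_i$, so $\delta\ominus a_i$ is defined and agrees with $f_i^{-1}(\delta)$. Once orbit finiteness is secured, $L_\ag$ is finite, $\ag\in\Og$, and $\ag\in C(\bg)\cap\bigcap_{i=1}^n D(\eta_i,h_i)$, proving that $\D$ enjoys the finite intersection property; the filter $\QQ$ generated by $\D$ over $\Og$ then exists as claimed.
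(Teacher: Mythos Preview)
Your strategy—reduce to a single cone $C(\bg)$, then set $L_\ag$ equal to the closure of $L_\bg$ under the partial bijections $f_i:k\mapsto\eta_i\odot h_i+k$ and their inverses—is correct and arguably cleaner than the paper's. The paper instead orders the fixed points as $\eta_1<\cdots<\eta_n$ and closes inductively one $\eta_i$ at a time, which requires checking that later closure steps do not destroy the earlier ones; your one-shot closure avoids that bookkeeping.

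There is, however, a gap in your finiteness argument. You correctly observe that purely forward chains have strictly increasing $\eta_{j_l}$'s and hence length at most $n$. But then you assert that ``commutativity of $\oplus$ shows that every orbit element takes the form $\gamma_0\oplus\bigoplus_{i\in S^+}a_i\ominus\bigoplus_{i\in S^-}a_i$''. Commutativity only tells you that a sequence of moves lands at $\gamma_0+\sum_i m_i a_i$ in the Grothendieck group of Hessenberg addition, with $m_i\in\mathbb Z$; it does not by itself force $m_i\in\{-1,0,1\}$. For instance, whenever $L_{a_2}\subseteq L_{a_1}$ and $\gamma_0$ is chosen so that $L_{\gamma_0}\cap[\eta_1,\eta_2)=L_{a_1}\cap[\eta_1,\eta_2)$, the chain
\[
\gamma_0\ \xrightarrow{f_2}\ \gamma_0\oplus a_2\ \xrightarrow{f_1^{-1}}\ \gamma_0\oplus a_2\ominus a_1\ \xrightarrow{f_2}\ \gamma_0\oplus 2a_2\ominus a_1
\]
is valid, and you would then need a separate argument (a linear relation among the $a_i$, say) to rewrite the endpoint with coefficients in $\{-1,0,1\}$. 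Your proof does not supply one.

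The fix is already implicit in what you proved about $f_i$ and $f_i^{-1}$. Because $\delta<\eta_i$ forces $L_\delta\subseteq[0,\eta_i)$ while $L_{a_i}\subseteq[\eta_i,\infty)$, applying $f_i$ sends $L_\delta$ to the \emph{disjoint union} $L_\delta\sqcup L_{a_i}$, and applying $f_i^{-1}$ sends $L_\delta$ to $L_\delta\setminus L_{a_i}$. Hence every orbit element $\delta$ satisfies
\[
L_\delta\ \subseteq\ T:=L_{\gamma_0}\cup\bigcup_{i\le n}L_{a_i},
\]
a finite set; so each orbit has at most $2^{|T|}$ elements and $L_\ag$ is finite. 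This replaces the $3^n$ bound and needs no Hessenberg bookkeeping beyond the disjoint-support observation you already made in your last paragraph.
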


\begin{proof}
Let $\ E,H,B\in\P_\og(\Og)$ be finite sets of ordinals, and let
%be a set of fixed points $\eta_i=2^{\eta_i}\in\Og$, and 
%let $H=\{h_1,...,h_m\}$ be a set of ordinals.
  $$D(E,H,B)=\{\ag\in\Og\mid\forall\eta\in E\,\forall h\in H\,\forall\bg\in B.\ \ag\in D(\eta,h)\cap C(\bg)\}.$$
%  that satisfy the following conditions for all $i\le n$ and all $j\le m$: 
% \begin{itemize}
%  \item $\ for\ all\ \ k<\eta_i \,.\ \ k\fin\ag\ \ \Iff\ \ {\eta_i}\odot h_j + k\fin \ag.$
%%  \item $\ for\ all\ \ k,\ell<\eta_i\ \ \ 2^{\eta_i}\odot \ell+ k\fin \ag\ \ \Iff\ \ 2^{\eta_i}\odot k + \ell\fin \ag.$
%\end{itemize}

 Since one has $C(\bg)\cap C(\bg')=C(\bg\vee\bg')$,  we may assume \wlg\ that $B=\{\bg\}$, and show that  $D(E,H;\{\bg\})$ is nonempty.
 
Let $\eta_1<...<\eta_n$, and let $E_i=\{\eta_1,\ldots,\eta_i\}$. 
%$H=\{h_0=0, h_1,\ldots,h_m\},$, $\bg=\sum_1^p 2^{b_k}$, and put 
%$$G=\bigcup_{i,j,k}\{ \eta_i\odot h_j\oplus b_k\mid 1\le i\le n,\ 0\le j\le m, 1\le k\le p\},$$ 
Put $\ag_0=\bg$, and
define  inductively the ordinals $\ag_i\in E_i$ as follows:
\begin{itemize}
\item let $L(i)=L_{\ag_{i-1}}\cup\{k<{\eta_i}\mid\exists h\in H\,({\eta_i}\odot h\oplus k\in L_{\ag_i-1})\}$, and pick the 

\noindent
ordinal $\ag_i$  such that  $~~L_{\ag_i}=L(i)\cup\{\eta_i\odot h\oplus k\mid h\in H, \eta_i> k\in L(i)\}$.

% If $\ag_{i}=\sum_{h\in H_i}(2^{\eta_i})^ h \odot \cg_{h}$,  put
%$$ \ag_{i+1}= \sum_{h\in H_{i+1}}(2^{\eta_{i+1}})^ h \odot \cg_h, \ \textrm{where}\ \ H_{i+1}=H\cup H_i,\ \textrm{ and } \ \cg_{h}=\bigvee_{h\in H_i} \cg_h.
%$$
%  $2^{\eta_1\cdot 2}\odot\cg_1+2^{\eta_{1}}\odot \dg_1+\dg_1$ with $\cg_1<2^{\eta_2}, \dg_1<\eta_1$, and $\ag_1\lincq\bg$;
%  
%   In order to obtain $ L_{\ag_{1}}$, firstly  add  to $L_{\bg}$ the ordinals $k<\eta_1$ such that, for some $j$,
%  ${\eta_1 }\odot h_j+k\in L_\bg$, and then, for all $k<\eta_1$ now in $L_{\ag_1}$, add  all ordinals ${\eta_i }\odot h_j+k $; 
%  similarly, let  ${\ag_{i}}=2^{\eta_{i}\cdot 2}\odot\cg_{i}+2^{\eta_{i}}\odot\ag_{i-1}+\ag_{i-1}$, with $\cg_i<\eta_{i+1}$ and $\ag_i\lincq\bg$. 
%  firstly  add  to $L_{\ag_{i}}$ the ordinals $k<\eta_{i+1}$ such that, for some $j$,
%  $\eta_{i+1 }\odot h_j+k\in L_{\ag_i}$, and then, for all $k<\eta_{i+1}$ now in $L_{\ag_{i+1}}$, add  all ordinals $\eta_{i+1 }\odot h_j+k $.
%
%\item in order to obtain $ L_{\ag_{3}}$, for all ordinals ${\eta_i }\odot \ell+k\in L_{\ag_2}$, with $\ell,k<\eta_i$,  add  to $L_{\ag_2}$ the ordinals ${\eta_i}\odot k+\ell$.
\end{itemize}

Then clearly $\ag_n$ belongs  to $D(E,H;\{\bg\})$.
\qed
\end{proof}
\medskip

\begin{rem}\label{qeta} If  $E,H,B\in\P_\og(\Og_\eg)$, with $\eg=2^\eg$, then the ordinal $\ag_n$ of the above proof can be taken smaller than $\eg$. Hence the family
 $$\D_\eg=\{D(E,H,B)\cap\Og_\eg \mid\,E,H,B\incl\Og_\eg \}$$
  enjoys the \FIP, and generates a filter $\QQ_\eg$ on $\Og_\eg$. 

\end{rem}
\section{The field of the Euclidean numbers}\label{field}
We introduce the field of the \emph{Euclidean numbers} as constituded 
of all \emph{transfinite sums} of real 
numbers, of length equal to some accessible ordinal in $\Omega$.
In order to avoid the  antinomies and paradoxes that might affect the summing up of infinitely many numbers, it seems essential to consider only sums of indexed elements. The choice of ordinal numbers as indices seems particularly appropriate, given their natural ordered structure. Moreover,
we shall ground on the lattice structure given by the \emph{formal inclusion} $\qincl$
%and of the \emph{formal membership} $\fin$, 
% and of the
% \emph{filter} $\F$ 
introduced in Subsection \ref{fin}. 

\subsection{Axiomatic introduction of the Euclidean numbers as 
infinite sums}\label{axiom}

% Let $\F$ be a filter on $\Og$ including the \emph{cone filter} $$\C=\{ 
% C_{\ag}\mid \ag\in\Og\}\ \  \mathrm{where}\ \ C_{\ag}=\{\xi\in\Og\mid \xi\ge\ag\}.$$
% 
Let $\EE$ be an \emph{ordered superfield of the reals},\footnote{~
 For sake of clarity, we denote general
Euclidean numbers by \emph{greek} letters $\sg,\tau,\xi,\eta,\zeta$,  
and real numbers by \emph{latin} letters $w,x,y,z$. The ordinal indices are denoted indifferently either by latin letters $i,j,h,k$ or by greek letters $\ag,\bg,\cg,\dg$. The $\Og$-sequences are denoted by the corresponding \emph{boldface} letters.
} and
% let
%\[
%\mathscr{S}(\Omega,\EE)=\left\{ \xi\in\mathbb{E}^{\Omega}\,|\,
%\exists \bg\in\Omega,\,\forall j>\bg,\, \xi_{j}=0\right\} 
%\]
%be the set of all \emph{eventually $0$  $\Og$-sequences} of elements 
%of $\EE$, and 
assume that a \emph{transfinite  sum} 
\[ \sum {\xib}=\sum_{k}\xi_{k}\] 
is defined  for all eventually zero $\Og$-sequence $\mathbf{\xi}=\la \xi_{k}\mid k\in\Og\ra$ of elements of $\EE$. (We denote by $\mathscr{S}(\Omega,X)=X^{(\Og)}$ the set of all eventually zero $\Og$-sequences from $X$.)
%an $\EE$\emph{-linear} map $
%\Sigma:\mathscr{S}(\Omega,\EE)\rightarrow\mathbb{E}$ be given.

%$\emph{onto} $\EE$.
Remark that we intend that any transfinite sum comprehends all summands $\xi_k,\, k\in\Og$. 
When needed, we can restrict the sum to any subset $K\incl\Og$ by putting
$$\sum_{k\in K} \xi_k=\sum_{k} \zg_k,\ \ \textrm{with} \ \ \zg_k=\xi_k\chi_K(k), \ \textrm{and}\ \
\chi_K(k)=\begin{cases}
    1  & \text{if } k\in K,  \\
    0  & \text{otherwise}.
\end{cases}
$$  
We make the natural assumption that  a \emph{transfinite sum} coincides with the 
\emph{ordinary
 sum} of the field $\EE$ when the number of \emph{non-zero summands} is \emph{finite}.
 
\medskip{}
 We call $\EE$ the \emph{field of the Euclidean numbers} if the following axioms are satisfied.

\begin{description}
%  \item[\LA]  \textbf{\emph{Linearity  Axiom}: }\\
%  The transfinite sum is $\EE$-linear, \ie
%  $$\sg\sum_{h<\bg}\xi_{h}+\tau\sum_{h<\bg}\zeta_{h}=\sum_{h<\bg}(\sg\xi_{h}+\tau\zeta_{h})\ \ \ for\ all\  \
%\sg,\tau,\xi_h,\zeta_h\in\EE.$$
%Weaker alternative:
 \item[\LA]  \textbf{\emph{Linearity  Axiom}: }
  The transfinite sum is $\R$-linear, \ie
  $$s\sum_{h}\xi_{h}+t\sum_{h}\zeta_{h}=\sum_{h}(s\xi_{h}+t\zeta_{h})\ \ \ for\ all\  \
s,t\in\R\ \ and\ \ {\xib},{\zgb}\in\EE^{(\Og)}.$$

  \item[\RA] \textbf{\emph{Real numbers  Axiom}: }
 $$ For\ all\ \ \xi\in \EE\ \ there\ exists\   \
 \mathbf{x}\in\R^{(\Og)}\ \ such\ that \ \
 \xi= \sum_{h} x_h.~~$$
 \end{description}
 Grounding on the axiom \RA, in  the following axioms we restrict ourselves 
to considering 
transfinite sums of \emph{real numbers}.
Firstly, an axiom  for comparing transfinite sums:
\begin{description}
    \item[\CA \textbf{\emph{ Comparison Axiom}}] ${~}$
 \emph{For all
$\mathbf{x,y}\in\R^{\Og}$, 
%$x_h,y_h=0$ for $h\ge\bg$ and
 %there exists
%$Q\in\QQ$
%$\bg\in\Og$ such that}
$$\ \ \ \exists\bg \ \forall
\ag\lincq \bg \ \big(\sum_{k\qincl\ag}x_{k}\,\leq\,\sum_{k\qincl\ag}y_{k}\,\big) \ 
\, 
\ \  \Imp \ \ \
%\Imp\ \ \ \
\sum_{k}x_{k}\leq\sum_{k}y_{k}\, ~~~
%\ for\ all\ \cg>\beta.
$$}
(Remark that the sums $\sum_{k\qincl\ag}$  are \emph{ordinary finite sums of real numbers}.)
\end{description}

\medskip
We define also a  \emph{double sum}:
$$ \sum_{h,k} x_{hk}=
% \sum_{k}\sum_{h}\xi_{hk}=
\sum_{j} y_j, \ \ where\ \ \ y_j=\sum_{h\vee k=j}x_{hk}.$$

(Again, the sum  $\sum_{h\vee k=j}x_{hk}$\, is an \emph{ordinary finite sum of real numbers}.)

\medskip
Recall that $L_{h\vee k}=L_h\cup L_k$, hence
% means that at most one between $h$ and $k$ may belong to some $i\qinclp j$, so as to have that $
$\ \ \sum_{h,k\qincl\ag}x_{hk}=\sum_{j\qincl\ag}y_j$, and
we have the
 %\medskip
   \textbf{\emph{Double sum comparison criterion:  }}
 
 \medskip 
  %   {If } 
%      there exists 
%$\cg\in\Og$
%such that}
    % $a_{hk}=b_{hk}=0$\ for $h,k\ge\bg$, and
   $\ \ \exists\bg \ \forall
\ag\lincq \bg \ \big(\, \sum_{h,k\qincl\ag}a_{hk}\le\sum_{h,k\qincl\ag}b_{hk}\,\big)\ \ \
 \Imp  \ \ \
     \sum_{h,k}a_{hk}\le \sum_{h,k}b_{hk}.$ 
     %for all $\bg>\cg$.
 
     \medskip

We give an axiom that linearizes the double sum so as to be  consistent with the embedding into $\EE$ of the ordinals  as transfinite sums of ones (see next subsection).
\begin{description}
\item[\DA]  \textbf{\emph{Double sum axiom}: } If $x_{hk}=0$ for $h,k\ge\eta$, then
% and $x_{h0}=0$ for all $h$, then
    $$ \sum_{h,k}x_{hk}=
% \sum_{k}\sum_{h}\xi_{hk}=
\sum_{i}y_{i}\ \ {where}\ \ 
 y_{i}=\begin{cases}
\,x_{hk}\!\!&if\,\,\,\,i=2^\eta\odot h + k\,\\
0\!\!&otherwise
\end{cases}.$$
\end{description}

%\medskip

The double sum allows to compute the products according to the following axiom:
\begin{description}
\item[\PA]  \emph{\textbf{Product axiom}}:  
    $$(\sum_{h}x_{h})(\sum_{k}y_{k})=\sum_{h,k} x_{h}y_{k}.$$
   % =\sum_h\big(x_h\sum_ky_k\big)
\end{description}

\medskip
 
In general, the double sum is different from the corresponding sum of sums, as we shall see below. So we give an axiom in order to simplify a sum of sums:
\begin{description}
\item[\SA]  \textbf{\emph{Sum axiom}: } If $x_{hk}=0$ for $h,k\ge\eg=2^\eg$, then
% and $x_{h0}=0$ for all $h$, then
    $$ \sum_{h} \sum_{k}x_{hk}=
% \sum_{h}x_{h0}+
\sum_{i}y_{i}\ \ {where}\ \ 
 y_{i}=\begin{cases}
 \,x_{hk}\!\!&if\,\,\,\,i=\eg^h\odot k,\  k\ne 0, h\,\\
 x_{hh}+x_{h0}\!\!&if\,\,\, i=\eg^h\odot h, \ h\ne 0\\
\,x_{00}\!\!&if\,\,\,\,i=0\,\\
0\!\!&otherwise
\end{cases}.$$
\end{description}

\medskip

\begin{rem}
The use of the ordinal $\eg^h\odot k$ as code of the pair $(h,k)$ in the axiom \SA, instead of the simpler $2^\eta\odot h+k$ used in the axiom \DA, is due to the fact that a double sum is necessarily \emph{symmetric}, whereas, as we shall see below, the \emph{sums cannot be interchanged }when summing arbitrary Euclidean numbers. This choice has a drawback in the fact that all pairs $(h,0)$ would  receive the code $0$, so they have to be dealt with separately, as we did in the axiom. 

%Moreover we have $\sum_h\sum_k x_{hk}=\sum_h\sum_k z_{hk}$ if $z_{hk}=-x_{hk}+x_{h0}$ if $\zg_h=\xi_h-x_{h0}+$

\noindent
\emph{\textbf{CAVEAT}}: It is not true, in general, that one can change the order of summation, as in the case of a double sum.
% equals the corresponding sum of sums!

\Pes\ let $\ x_{hk}\in\{0,1\},\  h,k=0$ for $  h,k\ge \og$ be chosen so as to have exactly one $1$ in every horizontal line $k= cost$,  exactly two $1$s in every vertical line $h= cost$  (and one can arrange so as to obtain various different values of the number $s_j$ of the ones on the border $h\vee k=j$ of the square $\{h\incl j\}\*\{k\incl j\}$). Then
$$\xi_k=\sum_{h} x_{hk}=1\ \ for\ 0\le k<\og, \ \ \ \zg_h=\sum_{k} x_{hk}=2\ \  for\ 0\le h<\og,\ \
$$ hence
$$ \sum_h2\chi_\og(h)=\sum_h\zg_h=\sum_{h}\sum_{k}x_{hk}=\sum_k2\xi_k=2\cdot\sum_{k}\sum_{h} x_{hk},$$ 
while   \   $\sum_{h,k} x_{hk}=\sum_j s_j\chi_\og(j)$.
%\ \ if\ \sum_{h\vee k= j}x_{hk}=\begin{cases}  1  & \text{if } j\equiv 0 \mod 2, \\
% 0  & otherwise
% \end{cases}$$    
\end{rem} 
\bigskip

Surprisingly enough, these simple and natural axioms are all that is 
needed in order to endow $\EE$ with a very 
rich structure, as we shall see in the sequel. 

We begin with a few simple consequences.

\medskip

\begin{itemize}
%   
%\item   \textbf{\emph{Double sum comparison: }}\\
%     {If } 
%%      there exists 
%%$\cg\in\Og$
%%such that}
%    % $a_{hk}=b_{hk}=0$\ for $h,k\ge\bg$, and
%   $\ \exists\bg \ \forall
%\ag\lincq \bg \ \big(\, \sum_{h,k\qincl\ag}a_{hk}\le\sum_{h,k\qincl\ag}b_{hk}\,\big)$,\
% then\  $\
%     \sum_{h,k}a_{hk}\le \sum_{h,k}b_{hk}.$ 
%     %for all $\bg>\cg$.
%%  \in\Og   
%     \medskip
%
           
\item  \textbf{\emph{Translation invariance }}\\
          If $x_k=0$ for $k\ge\eta$, then, for all $h\in\Og$,
$$\sum_{k}x_{k}=\sum_{i}y_{i},\ 
    \ \mathrm{where}\ \ y_{i}=\begin{cases}
\,x_{k}\!\!&if\,\,\,\,i=2^\eta\odot h+k\\
0\!\!&otherwise
\end{cases}.$$

In fact, put $x_{jk}=\begin{cases}
\,x_{k}\!\!&if\,\,\,\,j=0,h\\
0\!\!&otherwise
\end{cases}
. $

Then $2x_{k}=x_{k}+y_{2^\eta\odot h+k}$ and so,
\medskip
by the axioms \LA\ and  \DA,
$$ 
2\sum_k x_k=\sum_k x_{0k}+\sum_k y_{2^\eta\odot h +k}=\sum_{j,k} x_{jk}=\sum_{\ell}\sum_{j\vee k=\ell}x_{jk}=$$
$$=\sum_{\ell}x_{0\ell}+\sum_{\ell} \sum_{h\vee k=\ell}x_{hk}=
\sum_{k}x_{k}+\sum_{i}y_{i}$$.

\item \textbf{\emph{Finite sums}}\\
    The 
initial assumption that \emph{sums of finitely many non-zero 
elements receive their natural values in} 
$\EE$ can be deduced from 
the axioms \LA, \RA, \PA, \SA, and \CA.

Given $\xi_h\ne 0$ only  for $h= h_1,\ldots,h_n$, assume \wlg\ that 
$\xi_h=\sum_k x_{hk}$ with $x_{hk}=0$ for $k\ge\eg=2^\eg$, and for all $k$ if
 $h\ne h_1,\ldots,h_n$.

\smallskip \noindent
Put
 $z_k=\sum_{j=1}^n x_{h_j k}$ and let $y_i$ be as in the axiom \SA. Then

$\ \sum_h\xi_h=\sum_iy_i,$\ \ by\ \SA\,\ and\
\ $\xi_{h_1}+\ldots+\xi_{h_n}=\sum_kz_k,$\  by\ \LA.

\smallskip\noindent
Pick $\ag\lincq\bigvee_{j=1}^n(h_j\vee\eg^{h_j})$, so \ $h_j\qincl\ag,$ and $~ \eg^{h_j}\odot k\qincl\ag\ \Iff\ k\qincl\ag$. Then
%$\sum_kz_k=\sum_{j=1}^n x_{h_j 0}+\sum_iy_i,$ because
$$\sum_{k\qincl\ag} z_k=\sum_{k\qincl\ag}\sum_{j=1}^n x_{h_jk}=
\sum_{ k\qincl\ag}\sum_{j=1}^n\big( y_{\eg^{h_j}\odot k}\big)=
%=\sum_{(2^\eta)^h\odot k\qincl\ag}x_{hk}
\sum_{i\qincl\ag}y_i.$$
Hence $~\sum_kz_k=\sum_iy_i.$

%such sums are  
% equal to the  sum of a sequence equal to their sum   $s$ in $\EE$ for some $\ag$, and identically zero outside, by comparison.
% Hence it is
% equal to a multiple $s\cdot \xi$, by linearity. Then $\xi^{2}=\xi$ 
% follows by the product formula, and so $\xi=1$. 
\end{itemize}
     \begin{rem}\label{filt}
     The comparison axiom \CA\ could be strengthened by 
	 replacing the \emph{cone filter }generated by the {cones} 
	 $C(\ag)=\{\bg\in\Og\mid \ag\qincl\bg\}$ by a suitable finer
	 filter, so as to obtain further properties.
%	 \Pes\ one can consider the filter $\F$ itroduced in Subsection \ref{fin}
%	 only ordinals of the form $\ag\!\cdot\! n!$,
%	 so as to get suitable divisibility properties. 
	 In fact, in order to model the axioms \DA\ and \SA, we should use a much 
	 finer filter, that contains the filter $\Ql$ generated by the
	 \emph{complete ordinals} of subsection \ref{fin}.
\end{rem}

\subsection{Ordinal numbers as Euclidean numbers}\label{ordeucl}
An important consequence of the axioms is the existence of a natural isomorphic 
embedding of $\Og$ (as \emph{ordered semiring} with \emph{natural} sum and product) 
into $\EE$:

\begin{thm}\label{ordeu} ${~}$
 Define
$
\Psi:\Omega\longrightarrow\mathbb{E}\
$
 by\
$\
\Psi(\alpha)=\sum_{k}\chi_{\Og_\ag}(k)
$,\ where $ \chi_{\Og_\ag}$ is the characteristic function of the set $\Og_\ag=\{\bg\in\Og\mid \bg<\ag\}$.
Then, for all $\ag,\bg\in\Og$:
\begin{itemize}
  \item[$(i)$]  $~~\ag<\bg\ \ \Iff\ \ \Psi(\alpha)<\Psi(\bg)$;
  \item[$(ii)$] $~\Psi(\alpha+\beta)=\Psi(\alpha)+\Psi(\beta)\ \ \mathit{and}\ \
\Psi(\alpha\cdot\beta)=\Psi(\alpha)\cdot\Psi(\beta).$
\end{itemize}

\end{thm}

\begin{proof}

\noindent
$(i)$. The summmands in $\Psi(\alpha)$ are an initial segment of those in $\Psi(\bg)$, and all nonzero summands are positive, so $(i)$ is immediate.

\medskip
\noindent
$(ii)$
In order to prove that $\Psi(\alpha+\beta)=\Psi(\alpha)+\Psi(\beta)$,
it suffices to show by induction on $n$  that $\Psi$ preserves the (decreasing) 
base-$2$ normal form, \ie
$$\ag=\sum_{i=1}^{n}2^{j_{i}}\ (j_{1}> j_{2} > \ldots > j_{n})\ \
\Imp\ \ 
\Psi(\ag)=\sum_{i=1}^{n}\Psi(2^{j_{i}});$$
now 
$~~\sum_k\chi_{[0,\cg)}(k)=\sum_k\chi_{[2^\eta\dot h,2^\eta\cdot h+\cg)}(k)~$ whenever $\cg<2^\eta$,
  by translation invariance, hence one has, for $\ag=\sum_{i=1}^n 2^{j_i}$ and $\bg=2^{j_{n+1}}$
$$\Psi(\sum_{i=1}^n 2^{j_i})+\Psi(2^{j_{n+1}})=
\sum_k\chi_{[0,\ag)}(k) +\sum_k\chi_{[0,\bg)}(k)= $$
$$=\sum_k\chi_{[0,\ag)}(k) +\sum_k\chi_{[\ag,\ag+\bg)}(k)=
\sum_k\chi_{[0,\ag+\bg)}(k)=
\Psi(\sum_{i=1}^{n+1}2^{j_i}).$$
%where $1_k$ denotes the constant function $1$.
\smallskip

Remark that the equality holds also when $j=h$, thus giving 
$$2\Psi(2^{j})=\Psi(2^{j})+\Psi(2^{j})=
\sum_k\chi_{[0,2^j+2^j)}(k)=
\Psi(2^{j}+2^{j})=\Psi(2^{j+1}).$$

Finally,  the natural operations on ordinals are commutative, associative, and distributive, hence the multiplicative property 
$\Psi(\alpha\cdot\beta)=\Psi(\alpha)\cdot\Psi(\beta)$ needs to be proved 
only for ordinals of the form $2^{\ag}=2^{\sum_{i=1}^{n}2^{j_{i}}}
=\prod_{i=1}^{n}2^{2^{j_{i}}}$.

Put, as above, $\ \ag=\sum_{i=1}^n 2^{j_i}$,  $\bg=2^{j_{n+1}}$, and
$y_{2^\ag\odot h+k}=\chi_{[0,2^\bg)}(h)\chi_{[0,2^\ag)}(k)$;  then 
 $$
\Psi(2^{\ag+\bg})=
\sum_{ j}\chi_{[0,2^{\ag+\bg})}(j)=
%\sum_{0\le h<2^\bg}\  (\sum_{ 2^\ag h\le k<2^\ag(h+1)}\!\!1)=
%$$
%$$\sum_{0\le h<2^\bg} \sum_k\chi_{[2^\ag h,2^\ag(h+1))}\!\!(k)
%$$=\sum_h\big(\chi_{[0,2^\bg)}(h)\cdot  \sum_k\chi_{[0, 2^\ag)}(k)\big)=
%$$=
\sum_jy_j=
\sum_{h,k} \chi_{[0,2^\bg)}(h)\chi_{[0, 2^\ag)}(k)=
\Psi(2^{\ag})\Psi(2^{\bg})$$
\noindent
%because $2^\ag h+k\qincl\dg\ \Iff\ k\qincl\dg$ for $k<2^{j_n}$, $2^\ag h\qincl\dg$, and
%
%\textbf{completare\textsl{}}
%for
%$j=k+h<2^{\ag+\bg}$ 
%there are $k<2^\ag$, $h<2^\bg$
% s.t.\
%$$\sum_{ k\qincl\dg}\chi_{[0,2^{j_n}\odot \cg+\bg)}(j)=
%\sum_{h\vee k=j}\chi_{[0,2^{j_n}\odot \cg)}(k)\cdot\chi_{[0,\bg)}(h)=
%\sum_{k\qincl\dg}\chi_{[0,2^{j_n}\odot \cg)}(k)\cdot \sum_{h\qincl\dg}\chi_{[0,\bg)}(k)
%$$
\qed

\end{proof}
\bigskip
By virtue of this theorem, we may, as stipulated at the beginning, identify each ordinal $\ag\in\Omega$
with the corresponding Euclidean number $\Psi(\ag)$, so as 
to obtain that\ 
$\,\Og\incl\EE$, exactly as we have assumed $\R\incl\EE$.
Since we prefer to have the field $\EE$ as a \emph{set of atoms}, 
 this is the reason why we  have viewed from the beginning each ordinal number $\ag\in\Og$ \emph{``\`a 
la Cantor''}  as \emph{the order-type 
of}, and  not 
 \emph{identified  {``\`a la Von Neumann''} with}  the 
corresponding initial 
segment ${~}\Og_{\ag}=\{\bg\in\Og\incl\EE\,\mid\, \bg<\ag\,\}.$ 

\begin{rem} The meaning of the \emph{natural product} between ordinal  numbers is \emph{not easily understood}, when
defined through the order type of the appropriate well ordering of the cartesian product, which is quite
different from the usual well ordering of $\Og\*\Og$. 
(In fact, it is usually defined through the Cantor normal form.)
On the contrary,  thinking of an ordinal number as a \emph{Euclidean number},
namely as a \emph{transfinite sum of ones}, makes appear quite natural 
the meaning of the product, as given by the the product formula. 
Also the ordering of the ordinals is clearly the one induced by $\EE$, 
 because ordinals are transfinite sums of ones 
\emph{without zeroes in between}.
\end{rem}

%\end{enumerate}

% If the comparison axiom is strengthened by considering   the filter $\F$ obtained by concentrating  the Fr\'echet filter on the set of the complete ordinals ...
%  $n!\ag$ (see Remark \ref{filt}), we obtain that
% 
% $\bullet$
% \emph{every $\cg$-number is  congruent to $1$ modulo all natural numbers}, \ie
% $$\forall j\in\Og\,\forall n\in\N\,\exists \bg\ ( \og^j=n\bg+1)$$
%\noindent
%In fact,  for $1\le i\le n$, put $\bg_i=\sum_k b_{ik}$ where $b_{ik}=\begin{cases}
%1 & \mathrm{if}\ k=\og\odot h + qn+i<\og^j\\
%%1 & \mathrm{if}\ j=\sum_{i}\og^{j_{i}}nq_{i}\ (q_{i}\ge 1)\\
%0 & \mathrm{otherwise}
%\end{cases}$:
%then\[
%\omega^{j}=\sum_{k} \chi_{\omega^{j}}(k)=1+\sum_{i=1}^n \bg_i =1+n\bg_n
%\]
%because $\sum_{k\qincl n!\ag} b_{ik}=\sum_{k\qincl n!\ag} b_{nk}$ for all $i$ and all $\ag$.

%
%\subsection{Limits of Euclidean numbers}
%Let $\bg$ be a limit ordinal. We define the limit of a $\bg$-sequence of ordinal numbers,
%\begin{defn}
%Let $<\xi_\ag\mid\ag<\bg>$ be given. Then 
%$$\lim_{\ag\to\bg} \xi_\ag = \sum_{\ag<\bg} \zeta_\ag,$$
%where
%$$\zg_0=\xi_0,\ \  \zg_{\ag+1}=\xi_{\ag+1}-\zg_{\ag},\ \ and\ \ \zg_\cg = \sum_{\ag<\cg}\zg_\ag\ \ for\ limit\ \cg.
%$$
%\end{defn}

\subsection{The counting functions}\label{count}
Recall that\
$\
\mathscr{S}(\Omega,\EE)=\EE^{(\Og)}=\left\{ \xib\in\mathbb{E}^{\Omega}\,|\,
\exists \bg\in\Omega,\,\forall k\ge\bg,\, \xi_{k}=0\right\} 
$
is the set of all \emph{eventually zero  $\Og$-sequences} of elements 
of $\EE$, and define
the sum map $\Sg:\S(\Og,\EE)\to\EE$ by
%associates to each eventually zero $\Og$-sequence of 
%real numbers $x\in\S(\Og,\R)$ its \emph{transfinite sum}  
$\Sg(\xib)=\sum_{k}\xi_{k}$.
%, which is independent of $\bg$ provided  that $ \xi_{j}=0$ for $ j\ge\bg$. 
%
%So, in the sequel, we shall simply write $\Sg(\xi)=\sum_k \xi_k$ whenever the $\Og$-sequence $\xi$ is eventually zero. 

We now associate to each $x\in\S(\Og,\R)=\S(\Og,\EE)\cap \R^{\Og}$ an 
$\Og$-sequence of 
real numbers, its \emph{counting function}:\footnote{~
The relevance of the counting functions will result below, when it will become apparent that the \emph{counting function} plays
(for \emph{transfinite sums})
the role played by the sequence of the \emph{partial sums }for the
\emph{usual infinite series}.
On the other hand, the qualification ``counting'' is due to their 
meaning in the theory of numerosities to be  developed in Section \ref{num}.}
\begin{defn}
The \emph{counting function} of the $\Og$-sequence $\xb\in\S(\Og,\R)$ 
is the function $\varphi_{\xb}:\Og\to\R$ such that
\[
\varphi_{\xb}(\ag)=\sum_{k\qincl\ag}x_{k}\ \ 
\mathrm{for\ all}\  \ag\in\Og.
\]
\end{defn}
Given $j\in\Og$ and any set $X$, call a function $\psi:\Og\to X$
% \emph{quasi-additive} if $$\psi(2^j +\ag)=\psi(2^j) +\psi(\ag)\ \ \mathrm{for\ all}\ j\in\Og\
% \mathrm{and\ all}\ \ag<2^{j};$$
%call  $\psi:\Og\to X$ 
\emph{$j$-periodic} if 
%and only if there is $j\in\Og$ such that
%\[
%\psi(2^{j}\odot h)=0\ \mathrm{for\ all}\ 
%\ h\in\Og, h>0. 
%\]
\[ %\psi(0)=0\ \ \textrm{and}\ \
\psi(2^{j}\odot h+\ag)=\psi(\ag)\ \mathrm{for\ all}\ 
\ag<2^{j}\ \mathrm{and\ all}\ h\in\Og. 
\]

Call $\psi$ \emph{periodic} if it is $j$-periodic for some $j=2^j\in\Og$.

The counting functions are exactly the real valued  periodic functions, namely  
%let $\mathscr{B}(\Og,\,\mathbb{R})$ be the set of all real valued  periodic functions. Then
%\[
%\mathscr{B}(\Og,\, X)=\left\{ \psi\in X^{\Omega}\,|\,
%\exists j\in\Omega\,\forall\bg\in\Omega\, .\,\psi(\bg)=
%\psi(\left\langle \bg\right\rangle _{j})\right\}, 
%\]
%or equivalently,  $\psi\in \mathscr{B}(\Og,\, X) $
%if and only if there is $j\in\Og$ such that
%\[
%\psi(2^{j}\odot h+\ag)=\psi(\ag)\,\ \mathrm{for\ all}\ 
%\ag<2^{j}\ \mathrm{and\ all}\ h\in\Og.
%\]
 %we have
 
\begin{thm}\label{S=B}  ${~}$   
%\begin{enumerate}
%    \item  Let $x\in\S(\Og,\R)$ be such that $x_{k}=0$ for all $k\ge {j}$. Then,
%    if $\ag<2^{j}$,  
% $$\ \sum_{k\fin(2^{j}\odot 
%    h+\ag)}x_{k}=\sum_{k\fin\ag}x_{k} +\sum_{k\fin(2^{j}\odot 
%    h)}x_{k}=\sum_{k\fin\ag}x_{k} \ \  for \ all\ h\in\Og.$$
%    %in particular  
%%    ${~}\varphi_{x}(2^{j}+\ag)=\varphi_{x}(\ag)+\fg_x (2^j)$.\\
%%    
%%     \noindent
%%    Moreover, if $x_{k}=0$ for all $k\ge {j}$, then
%  Hence $$ \varphi_{x}(2^{j}\odot h+\ag)=\varphi_{x}(2^{j}\odot h)+\varphi_{x}(\ag)=\varphi_{x}(\ag),$$
%for all\
%   $ \ag<2^{j}$\ and all 
%   $ h\in\Og,$ 
%%   in particular $\varphi_{x}(2^{j}\odot h)=0$,
%  so \ $\varphi_{x}$ is $j$-periodic.
%   \medskip
  Every counting function is periodic, and 
 conversely every $ j$-periodic $
\psi\in\R^\Og\,$  is the counting function of
%such that $\psi(0)=0$, 
some $\xb\in\S(\Og,\R)$ such that $x_i=0$ for $i\ge j$.
%$x_0=\psi(0)$ and
% $\psi(2^\ag)=\fg_x(2^\ag)$ for all $\ag\in\Og$.
%$x_\ag=\psi(\ag)-\sum_{k\qinclp\ag} x_k$, and so
%$\psi=\varphi_{x}$.\\
%Therefore $\mathscr{B}(\Og,\,\mathbb{R})$ coincides with the set of 
%all counting functions.

%\end{enumerate}    

\end{thm}
\begin{proof}
  Let $x_\bg\in\R$ be zero above $j$. Then $\sum_{\bg\qincl\ag}x_\bg =\sum_{\bg\qincl2^j\cg+\ag}x_\bg$, for all $\ag\in C(2^j\cg)$.

Conversely, let $\fg\in\R^\Og$ be $j$-periodic, and define $x_\ag$ inductively on the length of the base-$2$-normal form of $\ag$, by
putting
 $$x_0=\fg(0),\ \ x_{2^\ag}=\fg(2^\ag)-\fg(0),\ \ x_{2^\ag+\bg}= \fg(2^\ag+\bg)-\fg(\bg)-\sum_{\cg\qinclp\bg}x_{2^\ag+\cg}.$$
Thus $\ \ \ x_{2^j+\bg}= \fg(2^j+\bg)-\fg(\bg)-\sum_{\cg\qinclp\bg} [\fg(2^j+\cg)-\fg(\cg)]=0$

\qed

\end{proof}
%
%\qed

The above theorem has an important consequence:

\begin{thm}\label{comp}
     Let 
$\mathscr{B}(\Og,\,\mathbb{R})$ be the set of all real valued periodic functions, and define the map
$
J:\,\mathscr{B}(\Og,\,\mathbb{R})\rightarrow\mathbb{E}$
by $J(\psi)=
\Sg(\xb),\
$ for any $\xb$ such that $\fg_\xb=\psi$.
Then $J$ is a well defined $\R$-algebra homomorphism onto the ordered field $\EE$. 
\end{thm}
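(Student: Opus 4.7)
The plan is to verify in turn that $J$ is well-defined, $\R$-linear, multiplicative, order-preserving, and surjective onto $\EE$. Well-definedness follows from Theorem~\ref{S=B}: each $\psi \in \B(\Og,\R)$ corresponds to a unique $x \in \S(\Og,\R)$ with $\fg_x = \psi$ (the values $x_\cg = \psi(2^\cg)$ determine $x$, and eventual vanishing of $x$ follows from the periodicity of $\psi$), so setting $J(\psi) := \Sg(x)$ is unambiguous.

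The $\R$-linearity of $J$ is then immediate: the assignment $x \mapsto \fg_x$ is visibly $\R$-linear in $x$, and the Linearity Axiom \LA{} gives $\Sg(sx_1 + tx_2) = s\Sg(x_1) + t\Sg(x_2)$. Order preservation is likewise direct: if $\psi \geq 0$ pointwise, then $\fg_x(\ag) = \sum_{k \fin \ag}x_k \geq 0$ for every $\ag \lincq \bg$, whence the Comparison Axiom \CA{} yields $\Sg(x) \geq 0$. For surjectivity, the Real Numbers Axiom \RA{} presents any $\xi \in \EE$ as $\sum_{h<\bg}x_h$; after extending $x$ by zero beyond $\bg$, one has $x \in \S(\Og,\R)$ and $J(\fg_x) = \Sg(x) = \xi$.

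The delicate step is multiplicativity. Given $\psi_i = \fg_{x_i}$ for $i=1,2$, the pointwise product satisfies, at each $\ag \in \Og$,
\[
(\psi_1\psi_2)(\ag) \;=\; \Big(\sum_{h \fin \ag}(x_1)_h\Big)\Big(\sum_{k \fin \ag}(x_2)_k\Big) \;=\; \sum_{h,k \fin \ag}(x_1)_h(x_2)_k.
\]
By the definition of the double sum (the identity $\sum_{h,k \fin \ag} x_{hk} = \sum_{j \fin \ag} y_j$ with $y_j = \sum_{h,k \fin\ba j}x_{hk}$), this last expression equals $\fg_y(\ag)$ for the sequence $y_j := \sum_{h,k \fin\ba j}(x_1)_h(x_2)_k$. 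Hence $\psi_1\psi_2 = \fg_y$, and the Product Axiom \PA{} yields
\[
J(\psi_1\psi_2) \;=\; \Sg(y) \;=\; \sum_{j<\bg}y_j \;=\; \sum_{h,k<\bg}(x_1)_h(x_2)_k \;=\; \Sg(x_1)\Sg(x_2) \;=\; J(\psi_1)J(\psi_2).
\]
The principal obstacle is precisely this identification of the pointwise product of two counting functions as the counting function associated with the sequence $y$ dictated by the double-sum bookkeeping; once that is in place, axiom \PA{} closes the computation, while the remaining clauses reduce cleanly to \LA{}, \RA{}, and \CA{}.
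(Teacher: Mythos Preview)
Your argument follows the paper's proof essentially step for step: well-definedness via Theorem~\ref{S=B}, linearity via \LA, surjectivity via \RA, and multiplicativity by identifying the pointwise product of two counting functions as the counting function of the sequence $y_j=\sum_{h,k\fin\ba j}(x_1)_h(x_2)_k$ and then invoking \PA. The extra paragraph on order preservation via \CA\ is not in the paper's proof (the phrase ``onto the ordered field $\EE$'' merely names the codomain), but it does no harm.

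One imprecision is worth flagging. You write that Theorem~\ref{S=B} gives, for each periodic $\psi$, a unique $x$ with $\fg_x=\psi$. In fact Theorem~\ref{S=B} only guarantees agreement at the ordinal powers, $\fg_x(2^\ag)=\psi(2^\ag)$; a general periodic function need not be a counting function. This does not break the argument, since $J$ is by definition computed from the values $\psi(2^\ag)=x_\ag$ alone, and for multiplicativity one has $(\psi_1\psi_2)(2^\ag)=(\fg_{x_1}\fg_{x_2})(2^\ag)$, so $J(\psi_1\psi_2)=J(\fg_{x_1}\fg_{x_2})$ and your computation on the $\fg_{x_i}$ suffices. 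But the sentence ``$\fg_x=\psi$'' should be weakened to ``$\fg_x(2^\ag)=\psi(2^\ag)$ for all $\ag$''.
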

\begin{proof}
    First of all,  by Theorem \ref{S=B}, for any 
    $\psi\in \B(\Og,\R)$ there is an    $\xb\in\S(\Og,\R)$ such that $\fg_\xb=\psi,$ and by \CA\ the value of $\sum \xb$ is uniquely determined by $\psi$.
%, namely the one 
%    inductively defined by 
%    $x_{\alpha} =\psi(\alpha)-\sum_{k\qinclp \ag}x_{k}$.
    Hence the map $J$ is well defined.

    \smallskip
   The map $\Sg$ being $\R$-linear, $J$ preserves linear combinations 
over $\R$. Moreover, by the axiom \RA, the range of $J$ is the whole field $\EE$.

\smallskip
Finally, given $\xb,\yb\in\S(\Og,\R)$ such that $x_h=y_h=0$ for $h\ge i$, put $z_{j}=\sum_{h\vee k= j}x_{h}y_{k}$. Then,
by the Product  axiom \PA, we have $$\Sg(\xb)\cdot\Sg(\yb)=\sum_{h,k}x_hy_k=\sum_jz_j=\Sg(\zb). $$
Hence one obtains, for all $\ag\in\Og,$
$$(\fg_{\xb}\cdot\fg_{\yb})(\ag)=\fg_{\xb}(\ag)\cdot\fg_{\yb}(\ag)=
\sum_{h\qincl\alpha}x_{h}\cdot
\sum_{k\qincl\alpha}y_{k}
=\sum_{h,k\qincl\ag}x_{h}y_{k}=
\sum_{j\qincl\ag}z_{j}=\fg_\zb(\ag)$$
 %because $h,k\qincl\ag$ if and only if $h\vee k\qincl\ag$. 

\smallskip 
 
 Therefore also  the products are preserved.
\qed
\end{proof}

\begin{rem}
The kernel of $J$ is a \emph{maximal ideal} determined by its
\emph{idempotents}, 
so there is an \emph{ultrafilter} $\U({\Og})$ on $\Og$ such that
$$ {~~~~~~~~~~~~~~~~~~~~~}\fg\in\ker J\ \ \Iff\ \ \{\ag\in\Og\mid\, 
\fg(\ag)=0\,\}\in\U(\Og),{~~~~~~~~~~~~~~}\mathbf{(U.1)}$$
or equivalently
$${~~~~~~~~~~~~~~~~~~}\sum_{k}x_{k}=0\ \ \Iff\ \ \{\ag\in\Og\mid\, 
\sum_{k\qincl\ag}x_{k}=0\,\}\in\U(\Og).{~~~~~~~~~~~~}\mathbf{(U.2)}$$
This fact will be basic in the constuction of the Euclidean field 
given in Subsection \ref{cons}.
\end{rem}  

\bigskip
We could extend the definition of the counting function $\varphi_{\xib}:\Og\to\EE^{\Og}$ to the set 
$\S(\Og,\EE)$ of all eventually zero
 $\Og$-sequences of Euclidean numbers in the natural way:
%
%\begin{defn}
%The \emph{counting function} of the $\Og$-sequence $\xi\in\S(\Og,\EE)$ 
%is the function $\varphi_{\xi}:\Og\to\EE^{\Og}$ such that
$$
\varphi_{\xib}(\alpha)=\sum_{k\qincl\alpha}\xi_{k}\ \ 
\mathrm{for\ all}\  \ag\in\Og.
$$
%\end{defn}

\noindent
The defining sums are ordinary finite sums in the field $\EE$, so 
%the same 
%argument used in the proof of Theorem \ref{S=B} yields that 
%$\B(\Og,\EE)=\{\,\fg_{\xi}\mid\, \xi\in\S(\Og,\EE)\,\}.
%$
%
%Then
 we might extend the homomorphism $J$ to the whole algebra 
$\A(\Og,\EE)=\{\,\fg_{\xib}\mid\, \xib\in\S(\Og,\EE)\,\}$, and obtain
a $\R$-linear application $
J_{\EE}$ onto the ordered field $\EE$,
such that $$
J_{\EE}(\varphi_{\xib})=\sum_{k}\xi_{k}\ \
 \mathit{for\ all}\ \ \xib\in\S(\Og,\EE).$$

\noindent
\textbf{\emph{CAVEAT.}} The Comparison Axiom \CA\ does not hold for transfinite sums of general Euclidean numbers, so the map $J_{\EE}$ is not an \emph{algebra homomorphism}. The kernel of  $J_{\EE}$ is a \emph{subspace}
of $\A(\Og,\EE)$ such that $(\ker J_{\EE})\cap \B(\Og,\R)=\ker J$.
But it is 
not an \emph{ideal}, \emph{a fortiori} it is not definable through
 an ultrafilter on $\Og$ by extending the conditions $(U.1),(U.2)$ 
 above to transfinite sums of general Euclidean numbers.
 
 \smallskip
 In fact there exist  $\xi_k$ in $\EE$ such that $\xi_k=0$ for $k\ge\og$ and $\sum_{k}\xi_k=0$, while $\xi_k>0$ for all $k<\og$, so any partial sum $\sum_{k\incl\ag}\xi_k$ 
% counting functions $\fg\in\ker J_{\EE}$ which 
 is \emph{greater than zero}: \pes\ take $\xi_0=\og-1,\ \ \xi_k=1$ for $0<k<\og.$ Then 
 $\sum_{k}\xi_k=\og-1-(\og-1)=0$, but $\sum_{k\incl \ag}\xi_k =\og-2^{|L_n|}>0$ for  $\ag=\og\odot\cg+n$.
\medskip

Given the eventually zero  $\Og$-sequence $\xb=\la x_k\mid k\in\Og\ra\in \S(\Omega,\,\mathbb{R})$, 
%we define the \emph{approximating function }$\psi_x:\Omega\to\EE$ of its transfinite sum $\sum_kx_k$ by
%$$\psi_x(\ag)=\sum_{k<\ag}x_k.$$
%Clearly, any approximating function is eventually constant, and it
its \emph{counting function} $\varphi_{\xb}$ 
%of a sequence 
%$\xi\in\S(\Omega,\,\mathbb{E})$ 
%is defined by
%\begin{equation}
%\varphi_{x}(\ag)=\sum_{k\fin\ag}x_{k}.\label{eq:bella-1}
%\end{equation}
% Thus it 
 is
an $\Og$-sequence $\varphi_{\xb}(\ag)=\sum_{k\qincl\ag}x_{k}$ of \emph{finite partial sums} from the 
transfinite sum $\sum_{k}x_{k}$, and we would like to write
\begin{equation}
\sum_{k}x_{k}=\lim_{\ag\uparrow\Og}\fg_{x}
(\ag)\label{eq:bella-1-1}
\end{equation}
where the limit should be taken towards 
%according
%to suitable toplogies both on  $\mathbb{E}$ and on 
an appropriate ``point at infinity" $\Og$.
%completion $\Og'=\Omega\cup\{\Og\}$ of $\Og$.

 Call \emph{$\Og$-limit} the limit so defined. Then the following properties 
hold by definition:
\begin{description}
\item[ ($\Og.{1}$)]\ \textbf{Existence and uniqueness:}\\
$\ \ $ Every 
$\varphi\in\B(\Omega,\R)$\ has a unique
$\Og$-limit \ \
$\underset{\eta\uparrow\Og}{\lim}\,\fg(\beta)=\xi\in\EE$, 
\\
and every \emph{\ }$\xi\in\mathbb{E}$ is the $\Og$-limit\ of
some net $\varphi\in\B(\Omega,\R)$\emph{. }
\item[ ($\Og.2$)]\ \textbf{Real numbers preservation}: 
% If $\varphi(\Og)$\ is
% eventually constant, namely
% $,$
% then
\[ \left(\,\exists\beta_{0}\in\Omega\; \forall\beta\sqsupseteq \beta_{0}\,.\,
\ \varphi(\beta)=r\,\right) \ \Imp\ \ 
\lim_{\beta\uparrow\Og}\varphi(\beta)=r
\]

\item[ ($\Og.3$)]\ \textbf{Sum and product preservation}:
$\ \ $ For all $\fg,\psi\in\B(\Og,\R)$
\begin{eqnarray*}
\lim_{\beta\uparrow{\Og}}\varphi(\beta)+
\lim_{\beta\uparrow{\Og}}\psi(\beta) & = & 
\lim_{\beta\uparrow\Og}\left(\varphi(\beta)+\psi(\beta)\right)\\
\lim_{\beta\uparrow{\Og}}\varphi(\beta)\cdot
\lim_{\beta\uparrow{\Og}}\psi(\beta) & = & 
\lim_{\beta\uparrow\Og}\left(\varphi(\beta)\cdot\psi(\beta)\right)
\end{eqnarray*}

\end{description}

 The  properties
($\Og$.1-3) are assumed
as axioms in \cite{ultra} (with an appropriate directed set $\Lambda$ replacing 
$\Omega$), thus providing a different
approach to Nostandard Analysis, called $\Lambda$-theory, 
usuful for the applications. The theory of the Euclidean
numbers, having more structure, is \emph{a fortiori} suitable to this aim:
the next section is devoted to this developement. However the \emph{product preservation} in $(\Og.3)$ \emph{fails} if extended to arbitrary $\fg,\psi\in\A(\Og,\EE)$, as shown by the example given in the \emph{Caveat } above.

\section{Euclidean numbers and Nonstandard Analysis}\label{NSA}

In this section we show that the Euclidean numbers are hyperreal
numbers, actually they are the \emph{unique saturated field of hyperreal numbers
with the cardinality of }$\Omega$.

\subsection{Hyperreal fields}\label{hyp}
Many different approaches to
Nonstandtard Analysis can be found in the literature, see in particular \cite{rob,keisler76,BDNF2} and the bibliography therein. For completeness, we briefly recall here the basic definitions of the 
superstructure approach.
\begin{defn}
For any set $X$ of atoms, the \emph{superstructure over} $X$ is the set
\[
V_{\omega}(X)=\bigcup\limits _{n\in\mathbb{N}}V_{n}(X)
\]
where
\[
V_{0}(X)=X\ \ 
\mathrm{and}\ \
V_{n+1}(X)=V_{n}(X)\cup\mathcal{P}(V_{n}(X))
\]

\end{defn}
\bigskip{}

\begin{defn}
\label{def:rosa}Given a field $\mathbb{\mathbb{F}}\supset\mathbb{R}$,
a \emph{nonstandard embedding} is a mapping
\begin{equation}
\ast:V_{\omega}(\mathbb{R})\rightarrow V_{\omega}(\mathbb{F});\label{lina}
\end{equation}
 that satisfies the
\emph{Leibniz transfer principle}, \ie\ 
\[
\rho(a_{1},...,a_{n})\Longleftrightarrow\rho(a_{1}^{\ast},...,a_{n}^{\ast})
\]
for all \textit{bounded quantifier
formul\ae}\footnote{
~By \emph{bounded quantifier formula} we mean a first-order formula
in the language $\mathcal{L}=\left\{ \in\right\} $ of set theory,
where all quantifieers occur in the bounded forms $\forall x\in y$
or $\exists x\in y$.
} $\rho(x_{1},...,x_{n})$ and all  $a_{1},...,a_{n}\in
V_{\omega}(\mathbb{R}).$

Moreover, it is assumed that $r^{\ast}=r$ for every $r\in\mathbb{R}$, that 
$\mathbb{R}^{\ast}=\mathbb{F}$, and that
$\mathbb{F}$ is a set of \emph{atoms}.

 Given a nonstandard embedding
 $\ast:V_{\omega}(\mathbb{R})\rightarrow V_{\omega}(\mathbb{F})$,
 the triple $(\ast,\,\mathbb{R},\,\mathbb{F})$ is called \emph{hyperreal
number system}, and the field {$\mathbb{F}$}
is called {\emph{hyperreal field}}.

\end{defn}

\subsection{The Euclidean numbers as hyperreal numbers}\label{euhyp}

In this section we show that $\mathbb{E}$ is a \emph{hyperreal field}
by giving an explicit definition of the map $\ast$ in
(\ref{lina}). This is one of the reasons why we have assumed that the
Euclidean numbers are \emph{atoms}.
\begin{defn}
Given any set $S$, a real algebra of functions $\mathfrak{F}\left(S,\mathbb{R}\right)\subset\mathbb{R}^{S}$
is called \textit{composable} if
\[
\forall f\in\mathbb{R}^{\mathbb{R}}\:\,
\forall\varphi\in\mathfrak{F}\left(S,\mathbb{R}\right)\,.\,
f\circ\varphi\in\mathfrak{F}\left(S,\mathbb{R}\right).
\]

\end{defn}
\smallskip{}

Recall the following theorem of Benci and Di Nasso:
\begin{thm}[\cite{BDN04}, Thm.~3.3]
\label{BDN}A field $\mathbb{F}$ is a hyperreal field if and only
if there exist a set $S$, a composable algebra of functions 
$\mathfrak{F}\left(S,\mathbb{R}\right)\subset\mathbb{R}^{S}$, 
and a surjective homorphism
$
J:\,\mathfrak{F}\left(S,\mathbb{R}\right)\rightarrow\mathbb{F}.
$
\end{thm}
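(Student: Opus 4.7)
The plan is to prove both directions of the biconditional.

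For the forward direction ($\FF$ hyperreal implies existence of a composable algebra and a surjection), I would invoke the classical representation of hyperreal fields as (limit) ultrapowers of $\R$: there exist an index set $S$ and an ultrafilter $\U$ on $S$ such that $\FF \cong \R^{S}/\U$. Setting $\mathfrak{F}(S,\R) = \R^{S}$ gives trivially a composable algebra, since $f \circ \varphi \in \R^{S}$ whenever $\varphi \in \R^{S}$ and $f \in \R^{\R}$, and the canonical quotient map is a surjective ring homomorphism onto $\FF$.

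For the converse, the substantive direction, I would construct an explicit nonstandard embedding $\ast\colon V_{\omega}(\R) \to V_{\omega}(\FF)$ from the data $(S, \mathfrak{F}, J)$. First, extract an ultrafilter $\U$ on $S$ from $\ker J$: since $\FF$ is a field, $\ker J$ is a maximal ideal, and composability applied with $f = \chi_{\{0\}}$ shows that the characteristic function $\mathbf{1}_{Z(\varphi)}$ of the zero set $Z(\varphi) = \{s : \varphi(s)=0\}$ of every $\varphi \in \mathfrak{F}$ also lies in $\mathfrak{F}$; maximality of $\ker J$ then forces each idempotent of $\mathfrak{F}$ into exactly one of $\ker J$ or $\mathbf{1} + \ker J$, so the family $\U = \{Z(\varphi) : \varphi \in \ker J\}$ determines an ultrafilter on $S$, mirroring the argument sketched in the Remark preceding this theorem.

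Second, define the star map recursively on $V_{\omega}(\R)$: set $r^{\ast} = J(r \cdot \mathbf{1})$ for $r \in \R$; for $A \subseteq \R^{n}$ put
$$A^{\ast} = \{(J(\varphi_{1}),\ldots,J(\varphi_{n})) : \{s : (\varphi_{1}(s),\ldots,\varphi_{n}(s)) \in A\} \in \U\};$$
and for $f\colon \R^{n} \to \R$ put $f^{\ast}(J(\varphi_{1}),\ldots,J(\varphi_{n})) = J(f \circ (\varphi_{1},\ldots,\varphi_{n}))$, well-defined modulo $\ker J$ by the ultrafilter property and remaining in $\mathfrak{F}$ by composability; then extend to higher superstructure levels by the usual ultrapower formalism. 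Third, verify the Leibniz transfer principle by induction on bounded-quantifier formulas, with atomic cases handled by the definitions, Boolean combinations by the ultrafilter axioms, and bounded quantifiers exactly as in the proof of \L{}o\'s's theorem.

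The main obstacle will be the first step, specifically showing that the composable algebra $\mathfrak{F}$ contains enough characteristic functions to actually recover $\ker J$ via an ultrafilter on $S$ --- equivalently, that every $\varphi \in \ker J$ vanishes on some set in $\U$ --- rather than being controlled merely by a filter on some proper subalgebra of $\mathcal{P}(S)$. Composability is exactly the tool that should supply the missing idempotents, because it lets us apply arbitrary real functions (including indicators of level sets) pointwise to any element of $\mathfrak{F}$. Once the ultrafilter is in place, the remainder reduces essentially to the standard \L{}o\'s construction applied to $\R^{S}/\U$, through which $\FF$ (isomorphic to this ultrapower via $J$) inherits a full nonstandard embedding satisfying transfer.
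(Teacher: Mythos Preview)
The paper does not prove this theorem at all: it is quoted from \cite{BDN04} (Theorem~3.3 there) and used as a black box to derive Theorem~\ref{thm:dod}. There is thus no argument in the present paper against which to compare your proposal.

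Your sketch nevertheless has two technical gaps. In the forward direction you claim $\FF\cong\R^{S}/\U$ and then set $\mathfrak{F}(S,\R)=\R^{S}$; but an arbitrary hyperreal field in the sense of Definition~\ref{def:rosa} need only be a \emph{limit} ultrapower of $\R$, and in that case the relevant composable algebra is the proper subalgebra of functions that are constant on the classes of some equivalence relation in the limiting filter, not all of $\R^{S}$. Your parenthetical ``(limit)'' acknowledges the issue, but the remainder of the paragraph proceeds as though the field were a plain ultrapower.

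In the converse you assert that $f\circ(\varphi_{1},\ldots,\varphi_{n})$ lies in $\mathfrak{F}$ ``by composability'' for $f:\R^{n}\to\R$. The definition of \emph{composable} only guarantees this for \emph{unary} $f\in\R^{\R}$; closure under postcomposition by multivariate $f$ does not follow from unary composability together with the ring operations. Consequently your definitions of $f^{*}$ for $n\ge 2$ and of $A^{*}$ for $A\subseteq\R^{n}$ are not justified as written. The standard remedy is to fix once and for all a bijection $\R\leftrightarrow\R^{n}$ whose coordinate inverses $p_{1},\ldots,p_{n}:\R\to\R$ are unary, transfer the $p_{i}$ via composability to obtain an internal bijection $\FF\leftrightarrow\FF^{n}$, and reduce all higher-arity objects to the unary case through this coding. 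With that adjustment in place, your \L{}o\'{s}-style induction on bounded-quantifier formulas goes through.
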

Applying this theorem together with Theorem \ref{comp}, we immediately get:
\begin{thm}
\label{thm:dod}The algebra $\B(\Og,\R)$ is composable, hence
 $\mathbb{E}$ is a hyperreal field.
 ${~}$ \qed
%\item (\ref{eq:figa}) holds with $S=\Omega$, $\mathbb{F}=\mathbb{E}$
%and $J$ defined in \ref{cor:uno}.
\end{thm}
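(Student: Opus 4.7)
The plan is to invoke the Benci--Di Nasso characterization (Theorem \ref{BDN}) directly, taking $S = \Og$, $\mathfrak{F}(S, \R) = \B(\Og, \R)$, and the surjective $\R$-algebra homomorphism $J: \B(\Og, \R) \to \EE$ already constructed in Theorem \ref{comp}. Under these choices, the only hypothesis of Theorem \ref{BDN} still needing verification is that $\B(\Og, \R)$ is composable.

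The key step is therefore to show that $f \circ \psi \in \B(\Og, \R)$ for every $f \in \R^{\R}$ and every $\psi \in \B(\Og, \R)$. Given a $j$-periodic $\psi$, I would apply $f$ pointwise to the defining identity $\psi(2^{j} \odot h + \ag) = \psi(\ag)$ (valid for all $\ag < 2^{j}$ and $h \in \Og$) to obtain
\[
(f \circ \psi)(2^{j} \odot h + \ag) = f\bigl(\psi(2^{j} \odot h + \ag)\bigr) = f\bigl(\psi(\ag)\bigr) = (f \circ \psi)(\ag).
\]
Hence $f \circ \psi$ has the same index-translation structure as $\psi$, so $f \circ \psi$ is again $j$-periodic and lies in $\B(\Og, \R)$; this establishes composability.

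With composability in hand, Theorem \ref{BDN} produces a nonstandard embedding $\ast : V_{\og}(\R) \to V_{\og}(\EE)$ satisfying the Leibniz transfer principle, which is precisely the assertion that $\EE$ is a hyperreal field in the sense of Definition \ref{def:rosa}.

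The main difficulty is therefore not in the final verification --- which is essentially immediate from the definition of $j$-periodicity --- but rather in the preparatory construction carried out in Sections \ref{field}--\ref{count}: it is the whole axiomatic framework (axioms \LA, \RA, \CA, \PA, \DA) together with Theorems \ref{S=B} and \ref{comp} that allows $\EE$ to be presented as the image of a surjective $\R$-algebra homomorphism from the algebra $\B(\Og, \R)$. Once that apparatus is in place, composability is a direct consequence of the fact that periodicity constrains only the indexing on $\Og$ and is preserved by any real-valued postcomposition.
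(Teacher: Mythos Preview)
Your approach is exactly the paper's: the paper merely writes ``Applying this theorem together with Theorem \ref{comp}, we immediately get'' and then \qed, so your explicit verification that postcomposition by any $f\in\R^{\R}$ preserves the translation identity $\psi(2^{j}\odot h+\ag)=\psi(\ag)$ is precisely the detail the paper leaves implicit.

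There is one small point you (and the paper) gloss over. The definition of $j$-periodic in Subsection \ref{count} includes the normalization $\psi(0)=0$. If $f(0)\neq 0$ then $(f\circ\psi)(0)=f(0)\neq 0$, so $f\circ\psi$ is not $j$-periodic in the literal sense, and composability of $\B(\Og,\R)$ fails as stated. The repair is trivial---either drop the clause $\psi(0)=0$ from the definition of periodicity (it plays no role in $J$, which depends only on the values $\psi(2^{\ag})$), or enlarge $\B(\Og,\R)$ to the algebra of functions satisfying only the translation condition and extend $J$ accordingly---but a careful write-up should acknowledge the point rather than leave it silent.
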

Now we  define explicitly the map $*$ and some other notions
of Nonstandard Analysis by applying the $\Og$-limit introduced 
in Subsection \ref{count}. 
%Recall that we may equally well consider 
%either the\emph{ whole 
%algebra} $\B(\Og,\R)$ or its \emph{subalgebra} $\B(\Og,\R)$, because the 
%limit depends only on the values taken on successor ordinals.
% This approach
% has been introduced in \cite{ultra} (where the above limit was called
% $\Lambda$-limit since the ordinal number $\Omega$ was not involved
% and the point at infinity was denoted by $\Lambda$).
\begin{defn}
Given a periodic $\Og$-sequence
$\varphi\in\B( \Omega, V_{n}(\mathbb{R}))$, define by induction its 
$\Og$-limit\, 
$
\underset{\eta\uparrow\Og}{lim}\,\varphi(\eta)
$\,
 as follows: 
 \begin{itemize}
     \item  for $n=0$, the limit  
     $\underset{\eta\uparrow\Og}{\lim}\,\varphi(\eta)=J(\fg)$
has been defined by the condition (\ref{eq:bella-1-1}) in Subsection \ref{count};
 
so, assuming the limit defined
for $n$, put, for $\fg\in\B( \Omega, V_{n+1}(\mathbb{R}))$:
     \item $\underset{\eta\uparrow\Og}{\lim}\,\varphi(\eta)=
\left\{ \underset{\eta\uparrow\Og}{\lim}\,\psi(\eta)\ |\ 
\psi\in\B( \Omega, V_{n}(\mathbb{R}))\text{ and}\ 
\forall\eta\in\Omega,\ \psi(\eta)\in\varphi(\eta)\right\} .$
 
\item A set in {$V_{\omega}(\mathbb{E})$}
which is the $\Og$-limit of an $\Og$-sequence is called \textit{internal}. 

\item A \emph{mathematical entity} (number, set, function or relation), when
 identified with a set in {$V_{\omega}(\mathbb{E})$,}
is called \emph{internal} if the corresponding set is internal. 
 \end{itemize}
 \end{defn}
Now we can
define the $*$-map.
\begin{defn}
   If $r\in\mathbb{R}$, then $r^{*}=r.$  If $E\in V_{\og}(\R)$ is a set, then
the \textit{star extension } $E^{\ast}$ of $E$ is 
\[
E^{\ast}:=\underset{\eta\uparrow\Og}{\lim}\ c_{E}(\eta)=
\ \left\{ \underset{\eta\uparrow\Og}{\lim}\ 
\psi(\eta)\ |\ \psi(\eta)\in E\right\} =
\left\{\,
J(\varphi)\ |\ \varphi\in\mathscr{\mathscr{B}}(\Omega,\,E)\right\}
\]
where $c_{E}(\xi)$ is the sequence identically equal to $E$. 
\end{defn}

% \[
% E^{*}:=\left\{ \underset{\xi\uparrow\Og}{lim}\,
% \varphi(\xi)\ |\ \varphi\in\mathscr{\mathscr{B}}(\Omega,\,E)\right\} 
% \]

%Remark that, when $E\subset\mathbb{R}$, then
%the \textit{star extension } $E^{\ast}$ of $E$ is the \emph{closure}
%of $E$ in $\mathbb{E}$ with respect to the 
%$S$-topology.

This appoach to Nonstandard Analysis being based on the notion of limit,
it is natural to formulate the Leibniz principle in the following 
apparently stronger
form:
\begin{prop} $~$
Let $~\rho(x_{1},...,x_{n})~$ be a bounded quantifier formula and let\\
$\varphi_{1},$ \ldots, $\varphi_{n}$ be $\Og$-sequences in 
$\B(\Og,V_{N}(\mathbb{R})), with\ N\in\mathbb{N}$;
then
\[
\left( \exists Q\in\mathcal{U}({\Omega})\,\,\forall\xi\in Q\,.\,
\rho(\varphi_{1}(\xi),...,\varphi_{n}(\xi))\right)\
\Longleftrightarrow\ \rho\left(\underset{\xi\uparrow\Og}{\lim}\,\varphi_{1}(\xi),...,\underset{\xi\uparrow\Og}{lim}\,\varphi_{n}(\xi)\right)
\]
\end{prop}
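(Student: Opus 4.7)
The plan is to prove the biconditional by induction on the complexity of the bounded quantifier formula $\rho$, following the Łoś-style pattern familiar from ultrapower constructions. The inductive structure is natural because bounded quantifier formulas are built from atomic formulas (equality and membership) by Boolean connectives and bounded quantifiers, and the ultrafilter $\U(\Og)$ together with the inductive definition of $\Lg$-limit on $\B(\Og, V_n(\R))$ give exactly the tools needed to pass each of these clauses through the biconditional.

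For the atomic cases, I would first handle equality: given $\varphi_1,\varphi_2\in\B(\Og,\R)$, the equivalence $\lim\varphi_1=\lim\varphi_2 \Leftrightarrow \{\xi\mid\varphi_1(\xi)=\varphi_2(\xi)\}\in\U(\Og)$ is exactly condition $\mathbf{(U.1)}$ applied to $\varphi_1-\varphi_2$. Membership $\lim\varphi_1\in\lim\varphi_2$ is handled via the definition of $\Lg$-limit of set-valued sequences: for the ``$\Leftarrow$'' direction, starting from $Q=\{\xi\mid\varphi_1(\xi)\in\varphi_2(\xi)\}\in\U(\Og)$, I modify $\varphi_1$ on each remainder class $\alpha<2^j$ (for a common period $2^j$ of both sequences) to produce a $j$-periodic $\psi$ with $\psi(\eta)\in\varphi_2(\eta)$ for every $\eta$ and $\{\psi=\varphi_1\}\supseteq Q\in\U(\Og)$, so by the equality case $\lim\psi=\lim\varphi_1\in\lim\varphi_2$. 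The ``$\Rightarrow$'' direction is just running this backwards via the definition and the equality case. Set-valued equality then reduces to extensionality, i.e.\ to the already-covered membership case.

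The Boolean connectives are routine: conjunction follows from the filter property (intersection of two sets in $\U(\Og)$ is in $\U(\Og)$) and negation from the ultrafilter property ($Q\in\U(\Og)$ iff $Q^c\notin\U(\Og)$). The only substantive step is the bounded quantifier. Since $\forall x\in y$ is the negation of $\exists x\in y\,.\,\neg$, I reduce to the existential case. Suppose $\rho\equiv\exists x\in y\,.\,\sigma(x,\vec z)$ and that the left-hand side holds on some $Q\in\U(\Og)$ with $y$-witnessing sequence $\varphi_n$. Choose a common period $2^j$ for $\varphi_1,\ldots,\varphi_n$. For each remainder class $\alpha<2^j$, if some element of $\varphi_n(\alpha)$ witnesses $\sigma(\cdot,\vec\varphi(\alpha))$, pick one by the Axiom of Choice; otherwise pick any element of $\varphi_n(\alpha)$ (nonempty whenever $\alpha$ is a remainder of some $\xi\in Q$). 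Extend to a $j$-periodic $x\in\B(\Og,V_{N-1}(\R))$; by periodicity, $x(\xi)\in\varphi_n(\xi)$ for all $\xi$, and $\sigma(x(\xi),\vec\varphi(\xi))$ holds on the full $Q$. The inductive hypothesis then yields $\sigma(\lim x,\lim\vec\varphi)$ with $\lim x\in\lim\varphi_n$ by the very definition of $\Lg$-limit on set-valued periodic sequences. The converse direction is easier: any $\sigma\in\lim\varphi_n$ equals $\lim\psi$ for some periodic $\psi$ with $\psi(\eta)\in\varphi_n(\eta)$ everywhere, apply the inductive hypothesis to $\sigma(\psi(\xi),\vec\varphi(\xi))$, intersect with the everywhere-true set $\{\psi(\xi)\in\varphi_n(\xi)\}$, and conclude.

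The main obstacle is precisely the existential step: the Skolem choice function must be itself periodic so as to land in $\B(\Og,V_{N-1}(\R))$. This is where it is essential to exploit the common period $2^j$: it reduces the choice problem from the proper-class-sized $\Og$ to the set $\Og_{2^j}$, making AC applicable, and the periodic extension automatically guarantees membership $x(\eta)\in\varphi_n(\eta)$ on all of $\Og$ rather than just on a large set — a point needed to invoke the definition of the $\Lg$-limit of $\varphi_n$. Without uniform periodicity this step would fail, so the fact that $\B(\Og,V_n(\R))$ is closed under finitary operations on sequences (same period suffices for finitely many inputs) is what makes the induction go through.
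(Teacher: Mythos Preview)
Your proposal is correct and is precisely the ``usual proof'' the paper alludes to: the paper's own argument consists of the single sentence ``The proof is a simple adaptation of the usual proof (see e.g.\ \cite{keisler76}),'' and what you have written is exactly that Łoś-style induction adapted to the algebra $\B(\Og,V_N(\R))$ of periodic sequences. Your identification of the one nontrivial point---that the Skolem witness in the existential step must itself be periodic, which is arranged by choosing on the finitely many (well, $<2^j$) remainder classes and extending $j$-periodically---is the heart of the adaptation.

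One small tightening is worth making explicit. In the membership and bounded-existential clauses you need $\psi(\eta)\in\varphi_n(\eta)$ for \emph{every} $\eta$, not just on a qualified set, because that is what the inductive definition of the $\Lambda$-limit demands. Your parenthetical ``nonempty whenever $\alpha$ is a remainder of some $\xi\in Q$'' does not quite close this, since $Q$ need not meet every remainder class modulo $2^j$. The clean fix is to establish, simultaneously with the induction on formula complexity, the auxiliary fact that if two periodic sequences agree on a set in $\U(\Og)$ then their $\Lambda$-limits coincide (this is immediate at level $0$ from $\mathbf{(U.1)}$ and propagates upward via extensionality); with this in hand you may first replace $\varphi_n$ by a periodic $\varphi_n'$ that agrees with it on $Q$ but is nonempty on every remainder class, and then carry out your choice argument. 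This is entirely standard and does not affect the correctness of your overall strategy.
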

\begin{proof}
The proof is a simple adaptation of the usual proof (see 
\pes\ \cite{keisler76}).

$~$ \qed
\end{proof}

\medskip
Clearly the Leibniz principle as formulated in Def.~\ref{def:rosa}
follows by taking constant sequences in the above proposition. 
Hence
%Let us 
%deduce two important corollaries:
\begin{cor}
\label{cor:bullo}The Euclidean number field $\EE$ is a real closed field.\end{cor}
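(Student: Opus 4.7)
The plan is to reduce real closedness of $\EE$ to that of $\R$ via the Leibniz transfer principle of Definition \ref{def:rosa}, exploiting the fact established in Theorem \ref{thm:dod} that $\EE$ is a hyperreal field, \ie\ $\R^{*}=\EE$. Recall that an ordered field is real closed if and only if every non-negative element admits a square root and every polynomial of odd degree has a root.

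First, I would note that $\EE$ is already an ordered superfield of $\R$ by the standing assumption of Subsection \ref{axiom}, so the ordered-field part is free. Second, I would transfer the sentence
\[
\forall x \in \R \, . \, x \ge 0 \Rightarrow \exists y \in \R \, . \, y\cdot y = x,
\]
which is a bounded quantifier formula true in $\R$; by Leibniz transfer it holds with $\R$ replaced by $\R^{*}=\EE$, so every non-negative Euclidean number has a square root in $\EE$. Third, for each fixed odd $n\in\N$ I would transfer
\[
\forall a_0,\ldots,a_{n-1}\in\R\ \exists y\in\R\,.\, y^{n}+a_{n-1}y^{n-1}+\cdots+a_0=0,
\]
again a bounded quantifier formula valid in $\R$; transfer yields the analogous statement for $\EE$, so every monic odd-degree polynomial over $\EE$ has a root in $\EE$, and dividing out leading coefficients handles the general case.

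The only potential obstacle is checking that each statement above is genuinely a single bounded quantifier formula in the sense of Definition \ref{def:rosa}. This is routine: the arithmetic operations and the order relation lie in $V_{\omega}(\R)$, each polynomial expression unfolds into finitely many applications of $+$ and $\cdot$, and all quantifiers are bounded to $\R$ (or to subsets such as $\R_{\ge 0}$ defined by an atomic condition). Note that real closedness is not a single first-order sentence but a schema indexed by $n\in\N$, so transfer must be applied once for each odd $n$; with this understood, the three ingredients combine to give that $\EE$ is real closed.
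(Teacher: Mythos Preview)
Your proof is correct and follows essentially the same approach as the paper: both deduce real closedness of $\EE=\R^{*}$ from that of $\R$ via the Leibniz transfer principle, with you simply spelling out explicitly the two first-order schemata (square roots of non-negatives and roots of odd-degree polynomials) that the paper leaves implicit in its one-line proof.
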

\begin{proof}
Since $\mathbb{R}$ is  real closed, then also $\mathbb{\mathbb{E}}=\mathbb{R}^{*}$
is real closed.
\qed
\end{proof}

\subsection{Saturation}\label{sat}

Recall the usual definiton of saturated field in Nonstandard Analysis:
\begin{defn}
A {hyperreal number system} $(*,\,\mathbb{R},\,\mathbb{F})$
is \textit{saturated} if any family of \emph{internal} sets
$\E=\left\{ E_{k}\mid {k\in K}\right\}$ of size  $|K|<|\mathbb{F}|$, with the
\emph{finite intersection property}\footnote{ The \FIP\ says 
thet every finite subfamily of $\E$ has nonempty intersection.} \FIP, has a nonempty intersection.\end{defn}
\begin{thm}
\label{satu}The {Euclidean number system} $(*,\,\mathbb{R},\,\mathbb{E})$
is saturated.\end{thm}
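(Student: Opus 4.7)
My plan is to prove saturation by constructing, for any internal family $\{E_k : k \in K\}$ with $|K|<\Og$ and the \FIP, an element $\xi \in \bigcap_k E_k$ as the $\Lg$-limit of a carefully chosen periodic sequence, using the ultrafilter $\mathcal{U}(\Og)$ together with the rich filter $\QQ$ of Lemma~\ref{compl}.

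First I would normalise the data. Since $|K|<\Og$ and $\Og$ is strongly inaccessible, identify $K$ with an ordinal $\kappa<\Og$; by regularity of $\Og$ fix a fixed-point $\eta=2^\eta$ with $\kappa<\eta<\Og$ such that, for each $k<\kappa$, $E_k = \lim_\beta \psi_k(\beta)$ for some $j_k$-periodic $\psi_k\in\mathscr{B}(\Og,V_N(\R))$ with $j_k<\eta$ and a common $N\in\N$. By the Leibniz transfer principle and the characterisation $\mathbf{(U.1)}$-$\mathbf{(U.2)}$ of $\mathcal{U}(\Og)$ from the Remark after Theorem~\ref{comp}, the \FIP\ translates into: for every finite $L\subseteq\kappa$ the set
$$T_L := \bigl\{\beta\in\Og \,:\, \textstyle\bigcap_{k\in L}\psi_k(\beta) \neq \emptyset\bigr\}$$
belongs to $\mathcal{U}(\Og)$; moreover, for every $k<\kappa$ the cone $C(2^k)=\{\beta : k\fin\beta\}$ already lies in the filter $\QQ\subseteq \mathcal{U}(\Og)$ of Lemma~\ref{compl}.

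Next I would build the selector. Using the axiom of choice, define $\varphi:\Og\to V_N(\R)$ to be $\eta$-periodic, setting, for $\beta<2^\eta$, $\varphi(\beta)$ to be some element of $\bigcap_{k\in L_\beta\cap\kappa}\psi_k(\beta)$ when this finite intersection is non-empty, and $\varphi(\beta)=0$ otherwise, then extending by $\varphi(2^\eta h + \beta)=\varphi(\beta)$. Put $\xi:=\lim_{\beta\uparrow\Lg}\varphi(\beta)$. By the definition of the $\Lg$-limit for set-valued sequences together with $\mathbf{(U.1)}$, membership $\xi\in E_k$ reduces to showing that, for each $k<\kappa$, the set $V_k := \{\beta : \varphi(\beta)\in\psi_k(\beta)\}$ lies in $\mathcal{U}(\Og)$. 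Since $V_k \supseteq C(2^k)\cap S$, where $S:=\{\beta : \bigcap_{j\in L_\beta\cap\kappa}\psi_j(\beta) \neq \emptyset\}$, and $C(2^k)\in\QQ$, this reduces to $S\in\mathcal{U}(\Og)$.

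The main obstacle is the verification $S\in\mathcal{U}(\Og)$, which is not immediate because $L_\beta\cap\kappa$ varies with $\beta$: the set $S$ is a disjoint union $\bigsqcup_{L\in[\kappa]^{<\omega}}(\{\beta : L_\beta\cap\kappa = L\}\cap T_L)$ rather than an intersection of the $T_L$'s, so the individual membership $T_L\in\mathcal{U}(\Og)$ cannot be combined directly. I would handle this by combining the cones $C(\beta_L)$ with the $(\eta_i,h)$-complete sets $D(\eta_i,h)$ of Lemma~\ref{compl} for a cofinal family of fixed-point exponents $\eta_i<\eta$: on the $\QQ$-large set of $\beta$ for which $L_\beta$ is simultaneously $(\eta_i,h)$-complete for all $i$, the varying $L_\beta\cap\kappa$ becomes internally controlled and $S$ contains a tail of $\QQ$. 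A cleaner indirect route is to invoke Corollary~\ref{unireclo}: by Corollary~\ref{cor:bullo}, $\EE$ is a real closed field of cardinality $\Og$, and since $\Og^{<\Og}=\Og$ by strong inaccessibility, a saturated real closed field of cardinality $\Og$ exists (for instance the Keisler field of \cite{keisler76}); by uniqueness $\EE$ is isomorphic to it and therefore inherits saturation.
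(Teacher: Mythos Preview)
Your proposal has genuine gaps in both routes. The indirect route via Corollary~\ref{unireclo} does not work: that corollary gives only an \emph{ordered field} isomorphism between $\EE$ and any other real closed field of size~$\Og$, whereas the saturation asserted in Theorem~\ref{satu} is a property of the hyperreal \emph{system} $(\ast,\R,\EE)$, \ie\ of the class of internal sets defined through the specific $*$-map (equivalently, through $\Lg$-limits). A bare field isomorphism $h:\EE\to\R^{\bullet}$ need not carry internal sets of one system to internal sets of the other, so saturation of $(\bullet,\R,\R^{\bullet})$ says nothing about saturation of $(\ast,\R,\EE)$. Note also that in the paper the isomorphism of hyperreal systems with the Keisler field (Corollary~\ref{unihyp}) is \emph{derived from} Theorem~\ref{satu}, so invoking it here would be circular.

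In your direct route you correctly isolate the crux, namely $S=\{\bg:\bigcap_{j\in L_\bg\cap\kappa}\psi_j(\bg)\ne\emptyset\}\in\U(\Og)$, but you do not prove it; the gesture toward the sets $D(\eta_i,h)$ of Lemma~\ref{compl} is not a proof, and it is unclear how completeness of $L_\bg$ would control the \emph{value} of the finite intersection $\bigcap_{j\in L_\bg\cap\kappa}\psi_j(\bg)$ rather than just the index set $L_\bg\cap\kappa$. The paper avoids this obstacle entirely by a diagonal argument at the internal level: one first forms the \emph{internal} finite intersections $B_\bg=\bigcap_{k\fin\bg}E_k$, which are nonempty by \FIP\ and, being internal, are themselves $\Lg$-limits $B_\bg=\lim_\xi B_{\bg,\xi}$ of nonempty sets; then one chooses $\varphi_\bg(\xi)\in B_{\bg,\xi}$ periodically in both variables and diagonalizes by $\psi(2^{j}\odot\bg+s)=\varphi_\bg(s)$. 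This sidesteps any need to show a set like $S$ lies in the ultrafilter, because non-emptiness is guaranteed at every stage by internality of $B_\bg$.
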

\begin{proof}
Let $\left\{ E_{k}\right\} _{k\in K},\, |K|\!<\!|\Og|,$ be a family of
internal sets with 
the \FIP. Recall that each internal set is the $\Og$-limit of a periodic 
function 
$\fg\in\B(\Og,\P(\R))$. 
The ordinal $\Og$ being 
(strongly) inaccessible, the periods are bounded by
$2^{j}$, for some $j\in\Og$. So we assume 
w.l.o.g. that
 $E_{k}=\lim_{\ag\uparrow{\Og}} E_{k,\ag}$, with
$\R\linc E_{k,\ag}=E_{k,2^jh+\ag}$ for all $k\in K$, all $h\in\Og$, and all $\ag<2^j$.
For sake of simplicity we assume that also  the index set is a 
segment of ordinals, say $K=\{k\in\Og\mid\, k<2^{j}\}$, and we put 
\[
E_{k}=E_{2^jh+k}\ \mathrm{for\ all}\ h\in\Og \ \mathrm{and\ all}\ k\in K
\]
so as to have $E_{k}$ periodically defined for all $k\in\Omega$.

By \FIP, the internal sets $B_{\bg}
:=\bigcap_{k\qincl\bg}E_{k}$ are nonempty for all $\bg\in\Og$.
Remark that we have extended the definition of the sets $E_{k}$ to 
all $k\in\Og$ in such a way that 
$B_{2^jh+\bg}$ coincides with $B_{ \bg}$ for all $\bg<2^j$ and all $h\in\Og$.
The sets $B_{\bg}$ being internal and nonempty, there exists for each 
$\bg$ a family of nonempty sets $\la B_{\bg,\xi}\incl\R\mid \xi\in\Og\ra$ such
that 
\[
B_{\bg}=\lim_{\xi\uparrow\Og}B_{\bg,\xi}.
\]
So for each $\bg$ there exists a sequence 
\[
\varphi_{\bg}\in \B(\Og,\R)\ \ \mathrm{such\ that}\ \ 
\forall \xi\,.\,\varphi_{\bg}(\xi)\in B_{\bg,\xi}
\]
Moreover, by our assumptions, we may choose the functions $\varphi_{\bg}$ 
so that
$$\varphi_{2^jh+\bg}(2^jk+\xi)=\varphi_{\bg}(
\xi)\ \ \forall \bg,\xi<2^j \,\forall h,k\in\Og.$$
Now define $\psi\in \B(\Og,\R)$ by putting, for $s<2^{j}$,
\[
\psi(2^{j}\odot\bg+s):=\varphi_{\bg}\left(s\right),
\ \ \mathrm{so\ that}\ \ 
\forall\bg\in\Omega\,\,\forall s<2^{j}\,.\,
\psi(2^{j}\odot\bg+s)\in B_{\bg,s}.
\]
Then $\psi(\xi)\in B_{\bg,\xi}$ for all $\xi\in\Og$, hence 
$$b=\lim_{\xi\uparrow\Og}\psi(\xi)\in 
\lim_{\xi\uparrow\Og}B_{\bg,\xi}=B_{\bg}\ \ \mathrm{for \ all}\ \ 
\bg\in\Og,$$
and so $b\in             \bigcap_{\bg\in\Og}B_{\bg}=
\bigcap_{k\in K}E_{k}$, which is therefore nonempty.

\qed
\end{proof}

\medskip
An immediate consequence of saturation is that the order-type of $\mathbb{\mathbb{E}}$ is $\eta_\Og$,\footnote{
~An ordered set $X$ has order-type $\eta_\ag$ if, given sets $Y,Z\incl X$, of cardinality less than $\ah_\ag$, s.t.\  $y<z$ for all $y\in Y, z\in Z$,
there exists $x\in X$ s.t.\ $y<x<z$ or all $y\in Y, z\in Z$.}
hence, as ordered field, $\EE$  is \emph{unique} in the following sense:
\begin{cor}\label{unireclo}
If $\ \mathbb{F}$ is a real closed field having order-type $\eta_\Omega$, then $\mathbb{F}$ is isomorphic to $\mathbb{E}$.
\end{cor}
\begin{proof}
Apply the fact that two real closed 
fields of size $\ah_\ag$ are isomorphic if and only if they have
the same \emph{order-type} $\eta_\ag$ (See \pes\ \cite{CK}, p.~348.)
\qed
\end{proof}

\medskip
\begin{rem}
If, instead of the first inaccessible number, we take $\Omega$ to be 
the class of all ordinals,
then the field of Euclidean numbers $\EE$ is isomorphic to
the field of \emph{surreal numbers} \textbf{No} of Conway \cite{con}; so, 
following Ehrlich \cite{el12} they
form an \emph{absolute arithmetic continuum}.
%However there is no \emph{a priori }canonical isomorphism.
\end{rem}

\subsection{The Keisler hyperreal field}\label{Keis}

Following Keisler we give the following definition of isomorphism 
between hyperreal fields:
.
\begin{defn}
\label{def:KKK}Let $(*,\,\mathbb{R},\,\mathbb{\mathbb{R}}^{*})$
and $(\circledast,\,\mathbb{R},\,\mathbb{R}^{\circledast})$ be 
hyperreal number systems with the same real part $\mathbb{R}$. A map
$
h:\,\mathbb{\mathbb{R}}^{*}\to\mathbb{R}^{\circledast}\,
$ is an
\textit{isomorphism} if the following conditions are fulfilled:
\begin{itemize}
\item (i) $h(r)=r$ for each $r\in\mathbb{R}$,
\item (ii) h is an \emph{ordered field isomorphism} from 
{$\mathbb{R}^{*}$}
onto $\,\mathbb{R}^{\circledast},$
\item (iii) For each real function $f$ of $n$ variables and all 
$x_{1},...,x_{n}\in\mathbb{R}^{*},$
\[
f^{\circledast}(h(x_{1}),...,h(x_{n}))=h\left(f^{*}(x_{1},...,x_{n})\right)
\]

\end{itemize}
Two hyperreal number systems are  \emph{isomorphic} if there is
an isomorphism between them.
\end{defn}

%\medskip{}

In the set theory ZFC plus the Axiom of Inaccessibility, one can 
prove the following theorem:
\begin{thm}[\cite{keisler76}, p.196]
There is a definable\footnote{
~Recall that
a set $X$ is (first order) definable if there is
a first order formula $\rho(x)$ such that $X$ is the 
unique set such that $\rho(X)$ holds. 
} hyperreal number system $(\bullet;\mathbb{R};\,\mathbb{R}^{\bullet})$
which is saturated and such that the cardinality
of \foreignlanguage{english}{\textup{$\mathbb{R}^{\bullet}$}} is
the first uncountable inaccessible cardinal.\qed
\end{thm}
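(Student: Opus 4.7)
The plan is to realize $\mathbb{R}^{\bullet}$ concretely as a first-order definable incarnation of the Euclidean field $\mathbb{E}$ of this paper. Since the axioms already force $\mathbb{E}$ to be a saturated hyperreal field of cardinality $\Omega$, it will suffice to give an explicit construction of $\mathbb{E}$ that is definable in ZFC plus the inaccessibility of $\Omega$, and the theorem will follow.

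The three algebraic properties are in hand. Theorem \ref{thm:dod} shows that $\mathbb{E}$ is a hyperreal field and Theorem \ref{satu} that it is saturated, while the cardinality $|\mathbb{E}|=\Omega$ follows from combining the embedding $\Omega\hookrightarrow\mathbb{E}$ of Theorem \ref{ordeu} with the Real Numbers Axiom \RA: every element of $\mathbb{E}$ is the $J$-image of a periodic function in $\mathscr{B}(\Omega,\mathbb{R})$, and strong inaccessibility of $\Omega$ (strong limit together with $|\mathbb{R}|<\Omega$, plus regularity) bounds $|\mathscr{B}(\Omega,\mathbb{R})|\le\Omega$. For definability, I would make canonical the construction sketched in the remark after Theorem \ref{comp} (and to be developed in Subsection \ref{cons}) by performing it inside the constructible universe $L$: take $\mathbb{R}^{\bullet}:=\mathscr{B}(\Omega,\mathbb{R})/\mathcal{U}(\Omega)$, where $\mathcal{U}(\Omega)$ is obtained from the filter $\mathcal{Q}$ of Lemma \ref{compl} by extending it to a maximal filter via a transfinite recursion that at each step selects the $<_{L}$-least set not yet decided. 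The star map sends a real $r$ to the class of the constant sequence $r$ and extends a set $E$ by the class of the constant sequence $E$. Each ingredient --- $\Omega$ as the first uncountable inaccessible, $\mathscr{B}(\Omega,\mathbb{R})$ by a pure set-theoretic definition, $\mathcal{U}(\Omega)$ by the canonical construction above --- is first-order definable, and thus so is $\mathbb{R}^{\bullet}$.

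The main obstacle is exactly this definability clause, because standard saturation and ultrapower arguments invoke the axiom of choice to pick witnesses and yield structures that are unique only up to isomorphism. The workaround is the one just described --- carrying out the whole construction inside $L$, where a definable global well-order is available --- together with an appeal to the uniqueness of saturated models of cardinality $\Omega$ of the complete theory $\mathrm{Th}(V_{\omega}(\mathbb{R}))$, obtained by a routine back-and-forth argument generalizing Corollary \ref{unireclo}, to ensure that the isomorphism type of $\mathbb{R}^{\bullet}$ does not depend on any hidden choices. Verifying the clauses of Definition \ref{def:rosa} for the resulting system is then routine: the Leibniz transfer principle follows from the ultrafilter properties $\mathbf{(U.1)}$, $\mathbf{(U.2)}$ of $\mathcal{U}(\Omega)$ recorded after Theorem \ref{comp}, and the equality $\mathbb{R}^{*}=\mathbb{R}^{\bullet}$ follows from surjectivity of $J$.
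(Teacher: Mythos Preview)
The paper does not prove this statement: it is quoted as a known result from \cite{keisler76} and closed with a bare \qed. Its role here is purely as external input, so that Corollary~\ref{unihyp} can identify the Euclidean system with Keisler's. There is therefore no ``paper's proof'' to compare your argument against; you are supplying a proof the authors deliberately omit.

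That said, your definability step has a genuine gap. The device ``at each step select the $<_L$-least set not yet decided'' works only if every subset of $\Omega$ that must be decided is constructible, and ZFC does not guarantee $\mathcal{P}(\Omega)\subseteq L$; for a nonconstructible $A\subseteq\Omega$ the $L$-order says nothing, and the recursion cannot proceed. If instead you literally carry out the entire construction inside $L$, the base field becomes $\mathbb{R}^{L}$ rather than $\mathbb{R}$, the filter is maximal only in $\mathcal{P}(\Omega)^{L}$, and the first inaccessible as computed in $L$ need not be $\Omega$; so you obtain a system over the wrong reals, of possibly the wrong size, and saturated only in the sense of $L$. Either reading fails to produce a hyperreal system definable in $V$ from ZFC plus the inaccessibility of $\Omega$.

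The standard route to definability --- and the reason Keisler can assert it --- runs through the \emph{uniqueness} you mention at the end, not through a canonical ultrafilter. One first shows, by any construction whatsoever (possibly highly non-canonical), that a saturated hyperreal system of cardinality $\Omega$ exists; the back-and-forth argument then gives uniqueness up to isomorphism; finally a definable representative is extracted from the isomorphism class by a rank-minimisation device of Scott-trick type. Your closing remark about uniqueness is thus the right ingredient, but it should be the \emph{engine} of the definability claim rather than a patch applied on top of an $L$-based construction.
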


We shall refer to the field $\mathbb{R}^{\bullet}$ as the
{\emph{hyperreal Keisler field}}.
According to Thm.~\ref{satu}, we have the following
interesting result: 
\begin{cor}\label{unihyp}
The Euclidean number system $(\ast,\,\mathbb{R},\,\mathbb{E})$ is
isomorphic to the hyperreal Keisler field 
$(\bullet;\,\mathbb{R};\,\mathbb{R}^{\bullet})$.
\qed
\end{cor}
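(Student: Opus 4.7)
The plan is to derive the corollary from a standard model-theoretic fact: any two saturated elementarily equivalent structures of the same cardinality are isomorphic. Concretely, I would invoke Theorem \ref{satu} to conclude that $(\ast,\mathbb{R},\mathbb{E})$ is a saturated hyperreal number system of cardinality $\Og$ (by Corollary \ref{cor:bullo} and the fact that $\mathbb{E}$ has cardinality $\Og$), and combine this with the cited Keisler theorem, which provides a definable saturated hyperreal system $(\bullet,\mathbb{R},\mathbb{R}^{\bullet})$ of the same cardinality. So both systems are saturated hyperreal extensions of $\mathbb{R}$ of cardinality $\Og$.

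Next I would construct the isomorphism $h:\mathbb{E}\to\mathbb{R}^{\bullet}$ by a back-and-forth construction of length $\Og$. Enumerate $\mathbb{E}=\{\xi_\ag\mid \ag<\Og\}$ and $\mathbb{R}^{\bullet}=\{\zg_\ag\mid \ag<\Og\}$. At stage $\ag$ one has defined a partial elementary map $h_\ag$ with domain and range of size $<\Og$, extending the identity on $\mathbb{R}$. To extend $h_\ag$ by the next element $\xi_\ag$ of $\mathbb{E}$, consider the type $p(x)$ of $\xi_\ag$ over $\dom(h_\ag)$ in $\mathbb{E}$; its image under $h_\ag$ is a finitely satisfiable type over $\ran(h_\ag)$ in $\mathbb{R}^{\bullet}$, and by the characterisation of saturation in terms of internal sets (every finitely satisfiable type of cardinality $<\Og$ consisting of formulas with parameters from an internal base is realized, a consequence of the \FIP-based Definition with $\Lg$-limits of sets) this type is realized in $\mathbb{R}^{\bullet}$. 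Symmetrically extend to cover $\zg_\ag$. Taking the union $h=\bigcup_\ag h_\ag$ yields a bijective elementary map fixing $\mathbb{R}$.

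Finally I would verify that the map $h$ thus built satisfies the three conditions of Definition \ref{def:KKK}: (i) $h(r)=r$ for $r\in\mathbb{R}$ by construction; (ii) $h$ is an ordered field isomorphism because order and the field operations are definable by bounded quantifier formulas, and elementarity preserves them; (iii) for any real function $f$ and $x_1,\ldots,x_n\in\mathbb{E}$, the graph of $f^\ast$ in $\mathbb{E}$ is the $\Lg$-limit of the graph of $f$, which is equivalent in both systems to the graph of $f^{\circledast}$, and elementarity gives $f^{\circledast}(h(x_1),\ldots,h(x_n))=h(f^\ast(x_1,\ldots,x_n))$.

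The main obstacle is the model-theoretic step: one must convert the topological/internal-set flavor of saturation used in Theorem \ref{satu} into the standard notion of $\kg$-saturation for types in the Leibniz language, so that the back-and-forth actually closes at each step. This is essentially routine once one identifies types over a small parameter set with families of internal sets satisfying \FIP, but it is the place where the bookkeeping has to be done carefully; everything else (real closedness, preservation of real operations, identity on $\mathbb{R}$) is either immediate or a direct application of the Leibniz transfer principle.
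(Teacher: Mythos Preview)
Your approach is correct and is essentially the one the paper has in mind: the paper's proof is simply an appeal to Theorem~\ref{satu} together with Keisler's uniqueness theorem for saturated hyperreal systems of a given cardinality, and you have merely unpacked that uniqueness via the standard back-and-forth over the language with a symbol for every real function. The only point worth tightening is the one you already flag: once you observe that both systems are elementary extensions of $\mathbb{R}$ in this expanded language (by the Leibniz transfer principle), the internal-set saturation of Theorem~\ref{satu} immediately yields realization of all small types, and the back-and-forth closes; the paper simply cites \cite{keisler76} for this step rather than spelling it out.
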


\subsection{$\Omega$ versus $\mathbb{N}^{*}$}

As an ordered field, the Euclidean field $\mathbb{E}$
is unique, and it defines a \emph{hyperreal number system}
$(\ast,\,\mathbb{R},\,\mathbb{E})$
which is unique up to isomorphism, by Corollary \ref{unihyp}.
 In addition,
the Euclidean field $\EE$ has two main extra properties which are not
shared by other hyperreal fileds:
\begin{itemize}
\item \emph{the sum of infinitely many hyperreal numbers is well defined};
\item \emph{the semiring of the accessible ordinals $\Omega$, with the \emph{natural sum and product}, is isomorphically embedded in $\mathbb{E}$
in a natural way}.
\end{itemize}
The combination of these features creates new phenomena which
we now investigate. 
\begin{rem}
\label{prop:zerlina} According to Theorem \ref{ordeu}, we have identified each ordinal $\ag\in\Og$ with the Euclidean number given by the natural embedding 
$
\Psi\,:\ \Omega\to\EE
$
defined by 
$$
\Psi(\alpha)=\sum_{k}\chi_{[0,\alpha)}(k)
=\underset{\xi\uparrow\Og}{\lim}\,\varphi_{\alpha}(\xi)
$$
where
\begin{equation}\label{eq:zerlina}
\varphi_{\alpha}(\xi)=\sum_{k\qincl\xi}\chi_{[0,\ag)}(k)=|
\{ k<\alpha\mid\,k\qincl\xi\,\}|.
\end{equation}
Actually, $\Psi$ is an isomorphic embedding of $\Og$ into $\kN$ (as ordered semirings):
in fact, by definition,
\[
\mathbb{N}^{*}=\left\{ \underset{\xi\uparrow\Og}{\lim}\,\varphi(\xi)\ |
\ \varphi\in\mathscr{B}(\Omega,\,\mathbb{\mathbb{N}})\right\}
\incl\EE
\]
so $ \Psi[\Omega]\incl
\mathbb{N}^{*},
$
% The embedding
% \[
% i\,:\ \Omega\subset\mathbb{N}^{*}
% \]
% can be made more explicit. 
because the  counting function $\fg_{\ag}$ of
$
\Psi(\alpha)=\sum_{k}\chi_{[0,\alpha)}(k)
$ takes its values in $\N$.
\end{rem}
%\qed

\medskip
Thus the ordinal numbers in $\Omega\incl\EE$ can be viewed as ``special''
\emph{hypernatural numbers}. In order to investigate the relation of 
general hypernatural numbers with  ordinal numbers, put, for $\xi\in\Og$,
\[
K(\xi)=\left\{ \ag\in\Omega\ |\ \ag\qincl\xi\right\}, 
\ \ \  m(\xi)=|K(\xi)|=2^{|L_\xi|},
\ \
\mathrm{and} \ \  \mathbb{N}_{m}=\left\{ n\in\mathbb{N}\ |\ n< m\right\}.\]
%For $n\le m(\xi)$, let 
%$j_{\xi}(n)$ be the $n$-th element in the 
%ordering of $K(\xi)$.\
% Define the map
% \[
% j(\cdot,\xi):\,\mathbb{N}_{m(\xi)}\rightarrow K(\xi)
% \]
% as follows:
% 
% where the ordinal in $K(\xi)$ are taken with the order relation
% <<$\leq$>>. 
Let  $j_{\xi}:\mathbb{N}_{m(\xi)}\to K(\xi)$ be the order-preserving bijection;
in particular
$
j_{\xi}(0)=0$ and $j_{\xi}(m(\xi)-1)=\xi$ for all $\xi\in\Og$.

Take the $\Og$-limits\
% \ \ \textbf{MODIFICARE}
\begin{equation}
K:=\underset{\xi\uparrow\Og}{\lim}\,K(\xi)\incl\Og^*,\ \
%\Og:=\underset{\xi\uparrow\Og}{\lim}\,\xi\,\in\Og^*;\ \
     \mu=\underset{\xi\uparrow\Og}{\lim}\,m(\xi)\,,\ \ 
%     \ \ \label{eq:gina}
%\end{equation}
%\begin{equation}
  \mathbb{N}_{\mu}^{*}:=\underset{\xi\uparrow\Og}{\lim}\,\mathbb{N}_{m(\xi)}\,,
  \ \ \mathrm{and}\ \ 
   j:=\underset{\xi\uparrow\Og}{\lim}\,j_{\xi}\,.\  \label{eq:pina}
\end{equation}

Then, by Leibniz principle,  
\[
\Omega\subset K
%=\left\{ \xi\in\Omega^{*}\ |\ 
%\xi\sqsubseteq\Og\right\} 
\incl\EE, 
\ \ \
\mathbb{N}_{\mu}^{*}=\left\{ k\in\mathbb{N}^{*}\ |\ 
k<\mu\right\}, \ \ \   \mathrm{and} \ \ \ 
 j:\ \mathbb{N}_{\mu}^{*}\rightarrow K
\] 
is\ an\ order-preserving\ surjection that is the identity when restricted to $\Og$.
%where $i$ is the map defined in Prop. \ref{prop:zerlina}.

\subsection{Hyperfinite sums v/s transfinite sums}\label{hyperf}
In Nonstandard Analysis one deals with particular infinite sums, usually 
indexed by closed initial segments of the set $\kN$ of the nonstandard
natural numbers. We consider in this subsection the relations between these
\emph{hyperfinite} sums of the NSA and the \emph{transfinite} sums of the
Euclidean field $\EE$ introduced in Section \ref{field}.
\begin{defn}
An internal set $F\in V_{\omega}(\mathbb{E})$ is called \emph{ hyperfinite}
if it is the $\Og$\emph{-limit of finite sets}, namely 
\[
F:=\underset{\xi\uparrow\Og}{\lim}\,F_{\xi}=\left\{ \underset{\xi\uparrow\Og}{\lim}\,x_{\xi}\ |\ x_{\xi}\in F_{\xi}\right\} 
\]
where the $F_{\xi}$s are finite sets in $V_\og (\R)$. 
\end{defn}
\Pes, for $\mu=\lim_{\xi\ua\Og} m_{\xi}\in\kN $, the set\
$\
\mathbb{N}_{\mu}^{*}=\left\{ \nu\in\mathbb{N}^{*}\ |\ \nu<\mu\right\} \ $
\
{is hyperfinite, since}\ \ \ 
$\mathbb{N}_{\mu}^{*}=\underset{\xi\uparrow\Og}{\lim}\left\{ n\in\mathbb{N}\ |\ n< m_{\xi}\right\}. 
$

The notion of hyperfinite set is basic in defining the  notion 
of \emph{hyperfinite sum}, which is the usual notion of infinite sum of hyperreal numbers:
\begin{defn}
Given a hyperfinite set of hyperreal numbers $F\subset\mathbb{E}$, the
\emph{hyperfinite sum of the elements o}f $F$ is defined as follows:
\[
\sum_{x\in F}^{*}x=\underset{\xi\uparrow\Og}{\lim}\left(\sum_{x_\xi\in F_{\xi}}x_\xi\right)
\]

Transfinite sums and hyperfinite sums are strictly related,
as the next theorem shows.
\end{defn}
\begin{thm}
If $\ab\in\mathscr{S}(\Omega,\mathbb{R})$, then
\[
\sum_{k}a_{k}=\sum_{x\in F^{a}}^{*}x
\]
where the hyperfinite set $F^\ab$ is defined as
\begin{equation}
F^{\ab}=\underset{\xi\uparrow\Og}{\lim}\ F_{\xi}^{\ab}\ \ \ where\ \ \ 
F_{\xi}^{\ab}=\left\{ a_{k}\ |\ k\qincl\xi\right\} \label{eq:pesce}
\end{equation}
\end{thm}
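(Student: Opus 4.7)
The plan is to route both sides of the identity through the counting function $\fg_a$ of Subsection \ref{count}, exhibiting each as the \emph{same} $\Lg$-limit $\lim_{\xi\ua\Lg}\fg_a(\xi)$.

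I would first handle the left-hand side. Since $a\in\S(\Og,\R)$ is eventually zero, Theorem \ref{S=B}(1) gives $\fg_a\in\B(\Og,\R)$, and Theorem \ref{comp} together with the $\Lg$-limit representation in (\ref{eq:bella-1-1}) yields
$$\sum_{k}a_k \;=\; \Sg(a) \;=\; J(\fg_a) \;=\; \lim_{\xi\ua\Lg}\fg_a(\xi).$$

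Next I would unfold the right-hand side. By the definition of hyperfinite sum and of $F^{a}$ in (\ref{eq:pesce}),
$$\sum_{x\in F^{a}}^{*}x \;=\; \lim_{\xi\ua\Lg}\Bigl(\sum_{x_{\xi}\in F^{a}_{\xi}}x_{\xi}\Bigr),$$
and for each fixed $\xi\in\Og$ the collection $F^{a}_{\xi}$ is finite (indexed by the finite set $L_{\xi}$), whence the ordinary finite sum satisfies
$$\sum_{x_{\xi}\in F^{a}_{\xi}}x_{\xi} \;=\; \sum_{k\fin\xi}a_{k} \;=\; \fg_a(\xi).$$
Comparing the two displayed $\Lg$-limits closes the argument.

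The one point I expect to require explicit care is the set-versus-indexed-family reading of $F^{a}_{\xi}=\{a_{k}\mid k\fin\xi\}$: if read as a plain set, coincidences $a_{k}=a_{k'}$ for distinct $k,k'\fin\xi$ would collapse and the identity $\sum_{x_{\xi}\in F^{a}_{\xi}}x_{\xi}=\fg_a(\xi)$ could fail. The natural reading, consistent with the hyperfinite-sum formalism of Subsection \ref{hyperf}, is as an indexed family (equivalently, one may replace $F^{a}_{\xi}$ by the graph $\{(k,a_{k})\mid k\fin\xi\}$ and project onto the second coordinate when summing). Once this convention is pinned down the identity is pure bookkeeping: no further estimate is needed, and the result reduces to matching two $\Lg$-limits of the same periodic function $\fg_a$.
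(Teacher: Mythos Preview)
Your proposal is correct and follows exactly the same chain of equalities as the paper: both compute $\sum_k a_k=\lim_{\xi\uparrow\Lambda}\sum_{k\fin\xi}a_k=\lim_{\xi\uparrow\Lambda}\sum_{x_\xi\in F^a_\xi}x_\xi=\sum^*_{x\in F^a}x$, and the paper's proof is in fact just this one-line string of identities. Your caveat about the set-versus-indexed-family reading of $F^a_\xi$ is well taken---the paper glosses over this point too---and your suggested fix via the graph is the standard way to make it precise.
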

\begin{proof}
We have
\[
\sum_{k}a_{k}=\underset{\xi\uparrow\Og}{\lim}\,\sum_{k\qincl\xi}a_{k}=\underset{\xi\uparrow\Og}{\lim}\,\sum_{x_\xi\in F_{\xi}^{\ab}}x_\xi=\sum_{x\in F^{\ab}}^{*}x.
\]
\qed
\end{proof}
\medskip{}
By the Leibniz principle, we have that a set $F\subset\mathbb{E}$
is hyperfinite if and only if there exist $a\in\mathbb{R}^{\mathbb{N}^{*}}$
and $\mu\in\mathbb{N}^{*}$ such that
\[
F=F_{\mu}^{a}:=\left\{ a_{\nu}\,|\,\nu\in\mathbb{N}^{*},\nu<\mu\right\} .
\]
This fact suggests the following notation:
\[
\sum_{\nu\in\mathbb{N}_{\mu}^{*}}a_{\nu}=\sum^*_{x\in F_{\mu}^{a}}x.
\]
Given a sequence $a\in\mathbb{R}^{\mathbb{N}}$,  put $S(n)=\sum_{k\qincl n}a_{k}$. Denote by
$a^{*}, S^*$
%\in\mathbb{R}^{\mathbb{N}^{*}}$
 the $*$-extensions 
of $a,S$ respectively: then,
 for any hypernatural number $\mu\in\mathbb{N}^{*}$,  the corresponding
hyperfinite sum is
\[
\sum_{\nu\in\mathbb{N}_{\mu}^{*}}^{*}a_{\nu}^{*}=S^{*}(\mu),
\]
by  the Leibniz principle.
In particular we have\ \ 
\begin{thm}\[
\sum_{k<\omega}a_{k}=\sum_{\nu\in\mathbb{N}_{\omega}^{*}}^{*}a_{\nu}^{*}\ \ \ for\ all\ \ \ a\in\mathbb{R}^{\mathbb{N}}.
\]
\end{thm}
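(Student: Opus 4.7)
The plan is to express both sides as $\Lambda$-limits of ordinary finite real sums and recognize them as the same element of $\EE$.

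First, I would view $a\in\R^\N$ as an eventually-zero $\Omega$-sequence $\tilde a$ by setting $\tilde a_k = a_k$ for $k<\omega$ and $\tilde a_k=0$ for $k\geq\omega$. By the linearity axiom \LA\ the left-hand side becomes $\sum_k \tilde a_k$, and the preceding theorem gives this as the hyperfinite sum $\sum^*_{x \in F^{\tilde a}} x$. Unfolding the definition, and noting that the contributions of $\tilde a_k$ for $k\geq\omega$ vanish,
\[
\sum_{k<\omega}a_k \;=\; \sum_k \tilde a_k \;=\; \lim_{\xi\uparrow\Lambda}\varphi_{\tilde a}(\xi) \;=\; \lim_{\xi\uparrow\Lambda} \sum_{k\in L_\xi\cap\omega} a_k.
\]

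Next, I would invoke the Leibniz-derived identity $\sum^*_{\nu \in \N^*_\omega}a^*_\nu = S^*(\omega)$ stated immediately above, using the standard partial-sum function $S(n) = \sum_{k<n} a_k$, together with the canonical representation $\omega = \Psi(\omega) = \lim_{\xi\uparrow\Lambda} |L_\xi\cap\omega|$ coming from Remark \ref{prop:zerlina}. The $*$-extension of $S$ evaluated at this limit gives
\[
\sum^*_{\nu\in\N^*_\omega} a^*_\nu \;=\; S^*(\omega) \;=\; \lim_{\xi\uparrow\Lambda}\sum_{k<|L_\xi\cap\omega|} a_k.
\]

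The hard part is showing that these two $\Lambda$-limits coincide in $\EE$. The difference of the underlying $\omega$-periodic functions,
\[
\fg(\xi) \;=\; \sum_{k\in L_\xi\cap\omega} a_k \;-\; \sum_{k<|L_\xi\cap\omega|} a_k,
\]
must lie in $\ker J$; equivalently, $\{\xi : \fg(\xi)=0\}$ must belong to the ultrafilter $\mathcal U(\Omega)$. This is nontrivial because at a generic $\xi$ the finite set $L_\xi\cap\omega$ is not the initial segment $\{0,1,\ldots,|L_\xi\cap\omega|-1\}$, so the two finite sums genuinely differ pointwise; the equality can only come from the ultrafilter. I would handle this by exploiting the structure of the filter $\mathcal Q$ of Lemma \ref{compl}: the cones $C(\beta_N)$ with $L_{\beta_N}=\{0,\ldots,N-1\}$ force $\{0,\ldots,N-1\}\subseteq L_\xi\cap\omega$ on a filter set, and the $\omega$-completeness sets $D(\omega,h)$ force the corresponding ``higher bits'' of $\xi$ to behave uniformly in $h$, so that contributions from indices outside any fixed initial segment either vanish or cancel modulo $\ker J$. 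A slicker alternative is to apply the order-preserving surjection $j:\N^*_\mu\to K$ of Remark \ref{prop:zerlina}, restricted to indices below $\omega$: this realizes a bijective match between the two hyperfinite indexed families $F^{\tilde a}$ and $\{a^*_\nu\mid \nu\in\N^*_\omega\}$, and the Leibniz principle then transfers the pointwise identity of the underlying finite sums to an equality in $\EE$.
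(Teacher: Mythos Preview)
Your reduction of both sides to $\Lambda$-limits is correct, and you put your finger on the real issue: the finite sums $\sum_{k\in L_\xi\cap\omega}a_k$ and $\sum_{k<|L_\xi\cap\omega|}a_k$ disagree pointwise, so equality in $\EE$ can only come from the ultrafilter. But neither of your two sketches actually delivers that.

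In approach~(a), the cones $C(\beta_N)$ force $\{0,\dots,N-1\}\subseteq L_\xi\cap\omega$ but do nothing to prevent $L_\xi\cap\omega$ from containing further elements of $\omega$; on every such cone the two finite sums still differ for generic $a$. The sets $D(\omega,h)$ only tie $L_\xi\cap\omega$ to its translates by $\omega\odot h$, which says nothing about $L_\xi\cap\omega$ being an initial segment. In approach~(b), the reindexing $j_\xi$ yields only the tautology $\sum_{k\in L_\xi\cap\omega}a_k=\sum_{i<|L_\xi\cap\omega|}a_{j_\xi(i)}$; to reach $\sum_{i<|L_\xi\cap\omega|}a_i$ you would need $j_\xi(i)=i$ on that range, i.e.\ $L_\xi\cap\omega$ an initial segment---precisely what is in question. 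There is no genuine ``pointwise identity of the underlying finite sums'' for Leibniz to transfer.

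The paper's route is different and shorter, but it conceals the same step rather than resolving it. The paper takes $S(n)=\sum_{k\lessdot n}a_k$ (the sum over $L_n$, not your standard partial sum $\sum_{k<n}a_k$), so that the counting function satisfies $\varphi(\omega\odot h+n)=S(n)$ tautologically, and then passes through the identification $\omega=\lim_{\omega\odot h+n\uparrow\Lambda}n$ to get $\sum_{k<\omega}a_k=S^{*}(\omega)$. Your entire obstacle is packed into that single identification, which the paper asserts rather than derives from the filter conditions you were trying to exploit; note also that with the paper's $S$ the Leibniz step $\sum^{*}_{\nu\in\N^{*}_{\mu}}a^{*}_{\nu}=S^{*}(\mu)$ is itself not the standard one. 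So the two arguments stall at the same place, just packaged differently.
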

\begin{proof}
Let $\varphi$ be the counting function of $\ \sum_{k<\omega}a_{k}$,
so
\[
\varphi(\og\odot h+n)=\sum_{k\qincl\og\odot h+n}a_{k}\chi_{\omega}(k)=\sum_{k\qincl n}a_{k}=S(n)
\ \ \ \mathrm{whence}
\ \ \
\sum_{k<\omega}a_{k}=\underset{\og\odot h+n\uparrow\Og}{\lim}\,S(n).
\]
We have taken $\og=\sum_k \chi_{\og} (k)=
\underset{\og\odot h+n\uparrow\Og}{\lim}\,n:
$
then we have that
\begin{align*}
\sum_{k<\og}a_{k} & =S^{*}\left(\underset{\og\odot h+n\uparrow\Og}{\lim}\, n\right)
  =S^{*}(\omega)=\sum_{\nu\in\mathbb{N}_{\omega}^{*}}a_{\nu}^{*}
\end{align*}
\qed
\end{proof}
\medskip{}
\Pes,  compute the sum $\sum_{k\in\N}
\,\frac{1}{2^{k}}
$:
\begin{equation}
S(n)=\sum_{k<n}\,\frac{1}{2^{k}}=\frac{1-\frac{1}{2^{n}}}{1-\frac{1}{2}}=2-\frac{1}{2^{n-1}},
 \ \ \mathrm{hence}\ \ 
\sum_{k<\omega}\,\frac{1}{2^{k}}=2-\frac{1}{2^{\omega-1}}\label{eq:marta}
\end{equation}
where $\og$ is considered as Euclidean integer, and so the power $2^{\og-1}$  is computed in the nonstandard sense.

This computation is ``more accurate" than the usual value given to the series
$
\sum_{k=0}^{+\infty}{2^{-k}}=2,
$
which neglects the infinitesimal $2^{1-\og}$ in (\ref{eq:marta}).

%\newpage
\section{Numerosities}\label{num}
In the history of Mathematics the problem of comparing the size of
objects has been extensively studied. In particular different methods
of measuring sets, by associating to them suitable kinds of numbers,
have been exploited. 
 A satisfactory notion of measure for sets
should be submitted to the famous five \emph{common notions} 
of Euclid's Elements, which traditionally 
embody the properties of magnitudes (see \cite{Eu}):

\begin{enumerate}
\item
\emph{Things equal to the same thing are also equal to one another.}
\item
\emph{And if equals be added to equals, the wholes are equal.}
\item
\emph{And if equals be subtracted from equals, the remainders are equal.}
\item
\emph{Things \emph{[exactly] applying onto} one  another are  equal to one
another.}\footnote
{~Here we translate $\epsilon\phi\ag\rho\mu o\zeta o\nu\tau\!\ag$
by ``[exactly] applying onto", instead of the usual ``coinciding with". 
As pointed out by T.L.~Heath in his commentary \cite{Eu}, 
this translation seems to give a more appropriate
rendering of the mathematical usage of the verb
$\epsilon\phi\ag\rho\mu o\zeta \epsilon\iota\nu$.}
\item
\emph{The whole is greater than the part.}
\end{enumerate}

To be sure, the Euclidean common notions seem \emph{prima facie} 
unsuitable for measuring the size of arbitrary sets: the third and fifth notions are known 
to be incompatible with the very ground of the Cantorian theory of 
cardinality, namely the so called \emph{Hume's Principle}

 \begin{description}
     \item[$(\mathsf{{HP}}) $]   \emph{Two sets 
  have the same size if and only if there exists a biunique 
  correspondence between  them.}
 \end{description}

This principle amounts to encompass the \emph{largest} possible class of 
size preserving applications, namely \emph{all bijections}. This 
fact might seem natural, and even \emph{implicit in the notion of 
counting}; 
but it strongly violates the equally natural \emph{Euclid's principle}
 \begin{description}
     \item[$(\mathsf{{EP}})$]   
    \emph{A set 
     is greater than its proper subsets,}
 \end{description}
which in turn seems \emph{implicit in the notion 
of size}. Be it as it may, the spectacular development of set theory 
in the entire twentieth century has put Euclid's principle in 
oblivion. Only the new millennium has seen a limited resurgence of 
proposals including \EP\ at the cost of some limitations of \HP\ 
(see \pes\ \cite{benci95b,BDNlab,BDNF1,BDNFuniv}, or the excellent survey 
\cite{mancu} and the references therein).

\subsection{Saving the five Euclidean common notions}\label{saveu}

It is worth noticing that taditional geometry
 satisfies the  Euclidean
common notions because there is a restricted class of ``exact 
applications'': \pes, the \emph{rigid equidecompositions} of polygons, or, more 
generally, when considering metric spaces,  the 
\emph{isometries}, \ie\ distance preserving bijections. 
But, when dealing with general set theory, an appeal to the notion of 
distance seems inappropriate.
So the question arises as to \emph{which correspondences }can be taken as ``exact 
applications'' in order to fulfill the five Euclidean common notions.

Cantor himself, besides his cardinality theory based on general 
bijections, introduced another way of assigning  numbers to sets, 
namely refined \emph{cardinal} numbers to \emph{ordinal} numbers. In this case 
one considers sets 
endowed with a  \emph{wellordering}, and 
restricts the ``exact 
applications'' to the order preserving 
bijections.
However, while
the ordinal 
arithmetic may respect the third common notion, nevertheless the 
Euclid's principle $\EP$ still badly fails.

The \emph{Euclidean} numbers  have been introduced above as an \emph{extension} of 
the ordinals, suitable to provide a notion of size satisfying
\emph{all} the Euclidean
common notions for an appropriate class of ``labelled sets''.  A 
\emph{labelled
set}
$E$ comes together with a suitable \emph{labelling map} 
$
\ell
$
such that $\ell^{-1}(x)$ is a \emph{finite set} for all $x\in E$.   

The original idea is that by putting
 an appropriate \emph{labelling}  on arbitrary sets, 
 the \emph{label preserving bijections} (intended as ``exact 
 applications'') might be used in defining an appropriate Euclidean notion of size, that produces exactly the 
``nonnegative integers'' of the Euclidean numbers (whence their 
name).\footnote{~
 A notion of \emph{numerosity} for
countable ``labelled sets",  
whose elements come with suitable labels (given by natural numbers)
was first presenteded in \cite{benci95b}, and later developed
in the paper \cite{BDNlab}. It provides
a notion of ``number of
elements'' that fulfills the fifth Euclidean common notion, and 
produces particular 
\emph{nonstandard integers}.
}
Remark that a labelled set can be viewed as a generalization of a 
wellordered set, because the latter can be naturally labelled by 
the unique order isomorphism 
with an (initial segment of an) ordinal. In fact we shall see below that 
any labelled set is ``equinumerous'' to a \emph{set of ordinals}.

Let us state the basic definitions.
\begin{defn}
\label{inset}A \emph{labelled set} is a pair $\left(E,\ell\right)$,
where 
\begin{itemize}
\item $E$ is a set of cardinality less than $\Og$, the set of the 
 ordinals smaller than the first inaccessible cardinal;\footnote{
 ~We consider the ordinals in $\Og$ as \emph{Euclidean numbers} in 
 $\EE$, hence  atoms, according to our stipulation following 
 Thm.\ref{ordeucl}; however, as far as the numerosity 
 theories are concerned, the Von Neumenn ordinals would be equivalent. } 
\item $\ell:E\rightarrow\Omega$\
is a function (the \emph{labelling function}) such that 
\begin{enumerate}
\item \label{enu:set-1} the set $\ell^{-1}(x)$ is finite for all $x\in\Omega$, 
\item \label{enu:eti-1} $\ell(x)=x$ for all $x\in E\cap\Omega$. 
\end{enumerate}
\item Two labelled sets $\left(E_{1},\ell_{1}\right)$ and $\left(E_{2},\ell_{2}\right)$
are \emph{coherent} if 
\[
\ell_{1}(x)=\ell_{2}(x)\ \ \mathrm{for\ all}\ x\in E_{1}\cap E_{2}.
\]
\item Two labelled sets $\left(E_{1},\ell_{1}\right)$ and $\left(E_{2},\ell_{2}\right)$
are \emph{isomorphic} if there is a biunique map $\phi:\ E_{1}\rightarrow E_{2}$
such that 
\[
\,\ell_{2}(\phi(x))=\ell_{1}(x) \ \  \ \mathrm{for\ all}\ x\in E_{1}.
\]
\end{itemize}
\end{defn}
%\begin{rem}
%By Point \ref{enu:eti-1}, each initial segment $\Omega_{\alpha}$
%receives the \emph{identity} as labelling function. Thus a labelled
%set can be viewed as a generalization of a well ordered set, because
%the latter can be naturally labelled by the unique order isomorphism
%with an (initial segment of an) ordinal. More  generally, we shall
%see below that any labelled set is ``equinumerous'' to a \emph{set
%of ordinals}.
%\end{rem}
Now we are ready to introduce our main notion: 
\begin{defn}
\label{cannum} The \emph{Euclidean} \emph{numerosity} of a labelled
set $(E,\ell)$ is the number 
\[
\mathfrak{n}(E,\ell)=\sum_{k}|\ell^{-1}(k)|\in\mathbb{E}.
\]

\end{defn}
The Euclidean numerosity satisfies \emph{the five Euclidean common notions}
(whence the name), when they are interpreted in the natural
way:

1. Two labelled sets are considered \emph{equal} (in size) if they have the
same numerosity;

2. The \emph{addition} of two labelled sets $\left(E_{1},\ell_{1}\right)$
and $\left(E_{2},\ell_{2}\right)$ with $E_{1}\cap E_{2}=\emptyset$
is given by
\[
\left(E_{1}\cup E_{2},\ell_{1}\cup\ell_{2}\right)\ \ \mathrm{where}\ \ \left(\ell_{1}\cup\ell_{2}\right)(x)=\begin{cases}
\ell_{1}(x) & if\,\,x\in E_{1}\\
\ell_{2}(x) & if\,\,x\in E_{2}
\end{cases}
\]

3. The \emph{subtraction} of two coherent labelled sets $\left(E_{1},\ell_{1}\right)$
and $\left(E_{2},\ell\right)$ with $E_{1}\subset E_{2}$ is given
by
\[
\left(E_{2}\setminus E_{1},\ell_{|E_{2}\setminus E_{1}}\right)
\]

4. Two labelled sets\emph{ [exactly] apply onto} one another is equivalent to
say that they are isomorphic;

5. A \emph{part} of a labelled set is just a (coherent) subset.

%\bigskip

\subsection{The Euclidean numerosity theories}\label{eunum}

There are three main \emph{operations} which  produce (possibly new) labelled
sets:
\begin{defn}
\label{def:basop}The \emph{basic operations} on labelled sets
are the following:
\begin{enumerate}
\item \textbf{Subset -} A \emph{subset} of a labelled set $\left(E,\ell\right)$
is a labelled set $\left(F,\ell_{|F}\right)$ where $F\subset E$; 
\item \textbf{Union -} The \emph{union} of two \emph{coherent} labelled sets $\left(E_{1},\ell_{1}\right)$,
$\left(E_{2},\ell_{2}\right)$ is the labelled set 
\[
\left(E_{1}\cup E_{2},\ell\right)\ \ \mathrm{where}\ \ \ell(x)=\begin{cases}
\ell_{1}(x) & if\,\,x\in E_{1}\\
\ell_{2}(x) & if\,\,x\in E_{2}
\end{cases}
\]

\item \textbf{Cartesian product -} The \emph{Cartesian product} of two labelled sets $\left(E_{1},\ell_{1}\right)$,
$\left(E_{2},\ell_{2}\right)$ is the labelled set 
\[
\left(E_{1}\times E_{2},\ell\right)\ \ \mathrm{where}\ \ \ell(x_1,x_2)=
\ell_{1}(x_1) \vee
\ell_{2}(x_2) 
\]

\end{enumerate}
\end{defn}
\medskip{}

\begin{defn}\label{eunumth}
${}$
\begin{itemize}
    \item A
family $(\mathcal{A},\ell)$ of (accessible) \emph{pairwise coherent} labelled 
sets is \emph{closed} if it is closed under the three
basic operations of Def. \ref{def:basop}
  \item  
 a \emph{Euclidean numerosity theory} is a pair $\left(\mathbb{U},\,\mathfrak{n}\right)$, where
$\mathbb{U}$ is a \emph{closed family} of labelled set and
$
\mathfrak{n}: \mathbb{U\rightarrow\mathbb{E}}
$
is the \emph{Euclidean numerosity}. 
\end{itemize}
\end{defn}

\begin{rem}
 Given a coherent family 
$(\mathcal{A},\ell)$ of labelled sets, there exists a {\em least closed family} including 
$\A$, denoted by 
$\UU(\A,\ell)$ and called  the \emph{closure of }$(\A,\ell)$ (or the closed family \emph{generated by} $(\A,\ell)$). 
We omit the labelling function $\ell$ if it  is clear from the context.

 In particular, if $\A=\{X\}$, then we write $\UU[X]$ for $\UU(\A)$. We also write $\UU(\Og)$ for $\mathbb{U}(\{\Og_{\ag}\mid 
\ag\in\Omega\}).$
 \end{rem}
\medskip
Let us see some examples.  Recall that we identify
the natural numbers with the finite ordinal numbers, and the 
accessible ordinals 
 with the corresponding Euclidean numbers.
\begin{itemize}
\item Let $F\subset\Omega$ be a \emph{finite} set, then $\mathbb{U}[F]$ contains
only finite sets and $\mathfrak{n}(E)$ is just the \emph{cardinality} of
the finite set $E$; in this case
\[
\mathfrak{n}(\mathbb{U}[F])=\mathbb{N}\subset\mathbb{E};
\]

\item The ``simplest" numerosity theory  containing \emph{infinite} sets is given
by $\left(\mathbb{U}[\mathbb{N}],\,\mathfrak{n}\right);$ in this
case we have that
\[
\mathfrak{n}(\mathbb{U}[\mathbb{N}])\subseteq
\left\{ \phi(\omega)\in\mathbb{E}\,|\ \phi\in\mathit{\mathscr{B}}
(\Omega,\mathbb{N}),\ \phi(\og h+n)=
\phi(n)\right\} ;
\]

\item The \emph{canonical numerosity theory} is given by 
$(\UU(\Og),\nk)=\left(\mathbb{U}(\{\Og_{\ag}\mid 
\ag\in\Omega\}),\,\mathfrak{n}\right):$
this is the ``simplest" theory which contains all the (accessible) ordinal
numbers.

\end{itemize}

Let us state the main properties of a \emph{Euclidean numerosity
theory.}
\begin{thm}
\label{numeucl} Let $(\mathbb{U},\mathfrak{n})$ be
a Euclidean numerosity theory. Then 
\begin{itemize}
\item each set in $\mathbb{U}$ is equinumerous to
a set of  ordinals, and one has, for all $A,B\in\mathbb{U}$: 
\begin{description}
\item [Sum-Difference] $~~~~~~\bullet~\nk(A\cup B)=\nk(A)+\nk(B)-\nk(A\cap B);$ 
\item [Part-Whole] $~~~~~~~~~~~\bullet~A\pincl B\ \Imp\ \nk(A)<\nk(B);$
\item [Cartesian Product]  $~\bullet~\nk(A\times B)=\nk(A)\cdot\nk(B);$
\item [Comparison]  $~~~~~~~~~~\bullet~$if the $1$-to-$1$ map $T:A\to B$  preserves labels,
 ${~~~~~~~~~~~~~~~~~~~~~~~~~~~~~}then~\nk(A)\le\nk(B)$.
\end{description}
Moreover
\item  If $~\Og_{\ag}\in\A$ for all $\ag\in\Og$, then
 the set of numerosities $\Nk=\nk[\mathbb{U}(\A)]$
is a positive subsemiring of nonstandard integers, that generates
the whole $\mathbb{Z}^{*}$, and one has
\begin{description}
\item[Identity] ${~~~~~~~~~~~~~~~~~~~}\bullet\  \forall\alpha\in\Omega,\,\,\mathfrak{n}(\Omega_{\alpha})=\alpha$;
\item[Cartesian product] ${~~~~~}\bullet \forall\alpha,\beta\in\Omega,\,\,\mathfrak{n}(\Omega_{\alpha}\times\Omega_{\beta})=\alpha\beta$;
\item[Translation invariance] $\bullet\ \forall E\incl\Omega_{\thg},\,\,\mathfrak{n}\left(\{\,2^\thg+\xi\mid\xi\in E\}\right)=\nk(E)$;
\item[Homothety invariance] $\bullet\ \forall E\incl\Omega_{\thg},\,\,\mathfrak{n}\left(\{2^\thg\xi\mid\xi\in E\}\right)=\nk(E)$. 
\end{description}
\end{itemize}
\end{thm}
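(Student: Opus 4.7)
Write $n_k^A := |\ell_A^{-1}(k)|$, so $\nk(A) = \sum_k n_k^A$ is a transfinite sum of natural numbers with counting function $\fg_A(\ag) = \sum_{k\fin\ag} n_k^A$. My plan is to translate each bullet into a statement about such transfinite sums and invoke the appropriate axiom.

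For \emph{Sum-Difference}, coherence of labels gives the pointwise identity $n_k^{A\cup B} = n_k^A + n_k^B - n_k^{A\cap B}$, and Linearity \LA\ yields the formula after summing over $k$. For the \emph{Comparison} bullet, a label-preserving injection $T\colon A\to B$ restricts to an injection $\ell_A^{-1}(k)\hookrightarrow\ell_B^{-1}(k)$ for every $k$, so $n_k^A\le n_k^B$ pointwise; hence $\fg_A(\ag)\le\fg_B(\ag)$ at every $\ag$, and \CA\ delivers $\nk(A)\le\nk(B)$. \emph{Part-Whole} then follows: Sum-Difference reduces it to showing $\nk(B\setminus A)\ge 1$, which comes from comparing $B\setminus A$ with a singleton inside it via \CA. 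For the \emph{Cartesian Product}, decompose $\ell^{-1}(k) = \bigsqcup_{h\vee i = k}\ell_1^{-1}(h)\times\ell_2^{-1}(i)$, so $n_k^{A\times B} = \sum_{h\vee i = k} n_h^A n_i^B$; summing and invoking the Product Axiom \PA\ identifies $\nk(A\times B)$ with the double sum $\sum_{h,i} n_h^A n_i^B = \nk(A)\cdot\nk(B)$.

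To show each $(E,\ell)\in\UU$ is equinumerous to a set of ordinals, pick a fixed-point ordinal $\eta = 2^\eta$ exceeding every label occurring in $E$ and every $n_k^E$ (possible since $|E|<\Og$ and $\Og$ is inaccessible). For each $k$ with $m := n_k^E\ge 1$, select $m$ distinct ordinals $\gamma_{k,1},\ldots,\gamma_{k,m}$ whose base-$2$ normal forms are built from $k$ by adjoining fresh exponents drawn from widely-separated blocks above $\eta$, so that collecting them into $F\subset\Og$ produces $|F\cap L_\ag| = \fg_E(\ag)$ for every $\ag$ in the cone over a suitable $\bg_0$. Applying \CA\ in both directions then gives $\nk(F) = \nk(E,\ell)$. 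This explicit bookkeeping is the main obstacle; the abundance of fixed-point ordinals below the inaccessible $\Og$ provides the room to keep the $\gamma_{k,j}$ simultaneously distinct across all $k$.

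For the canonical-theory specifics ($\Og_\ag\in\A$ for all $\ag$): \emph{Identity} $\nk(\Og_\ag) = \ag$ is Theorem \ref{ordeu}, since $\nk(\Og_\ag) = \sum_{k<\ag} 1 = \Psi(\ag) = \ag$ under our identification. The \emph{Cartesian product} formula for $\Og_\ag\times\Og_\bg$ combines Identity with the Cartesian Product bullet already established. \emph{Translation invariance} is the specialization to the indicator $\chi_E$ of the Translation-Invariance consequence of \LA\ and \DA\ derived right after the axioms. \emph{Homothety invariance} follows similarly from the Associativity consequence together with the product formula. Finally, $\Nk$ is closed under $+$ by Sum-Difference on disjoint unions and under $\cdot$ by Cartesian Product, giving the positive subsemiring structure; since every hyperinteger is a difference of two hypernaturals and the equinumerosity step realises every hypernatural in $\nk[\UU(\A)]$ as some $\nk(F)$, the group generated by $\Nk$ is all of $\Z^*$.
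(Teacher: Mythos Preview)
Your treatment of Sum--Difference, Part--Whole, Comparison, and the canonical-theory identities matches the paper's ``straightforward from the axioms'' stance, so no disagreement there. The substantive content is the equinumerosity claim, and your construction for it does not work.

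You want a set $F\subset\Og$ with $|F\cap L_\ag|=\fg_E(\ag)=\sum_{k\fin\ag}n_k$ for all $\ag$ in a cone $C(\bg_0)$, and then to conclude via \CA. But compare two ordinals $\ag_1\sqsubset\ag_2$ in the cone that differ by a single formal member $m\notin L_{\ag_1}$. The left side jumps by $0$ or $1$ (according as $m\in F$ or not), while the right side jumps by exactly $n_m$. Equality at both $\ag_1$ and $\ag_2$ therefore forces $n_m\in\{0,1\}$ for every $m\notin L_{\bg_0}$, i.e.\ for all but the finitely many labels in $L_{\bg_0}$. For a general labelled set with repeated labels this fails, so no such $F$ exists. (Your description of the $\gamma_{k,j}$ as having ``base-$2$ normal forms built from $k$ by adjoining fresh exponents'' is also a category confusion: the condition $\gamma\in L_\ag$ concerns $\gamma$ as a single exponent of $\ag$, and is unrelated to the normal form \emph{of} $\gamma$.)

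The paper bypasses this obstruction by invoking the Sum Axiom \DA\ rather than \CA. Pick $\theta$ above all labels, write each $n_k$ as an inner sum $\sum_h a_{kh}$ with $a_{kh}=1$ for $h<n_k$ and $0$ otherwise, and apply \DA\ to $\sum_k\sum_h a_{kh}$: the result is a single transfinite sum $\sum_l c_l$ with $c_l\in\{0,1\}$, which is exactly $\nk(L)$ for the set of ordinals $L=\{\,2^\theta\!\odot k+h\mid h<n_k\,\}$. The ``spreading'' of $n_k$ copies into distinct ordinal slots is done by the axiom, not by a combinatorial choice. The same device, together with Theorem~\ref{S=B}, is what the paper uses for the claim that differences of numerosities exhaust $\Z^*$; your final sentence (``the equinumerosity step realises every hypernatural\ldots'') inherits the gap above and also does not explain why an \emph{arbitrary} hypernatural arises as a numerosity in the first place.
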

\
\begin{proof}
    
 All the assertions are straightforward consequences of the 
 definitions and of the axioms of $\EE$, except 
 %\textbf{(n.1-2)}
 the first one.
 In order to prove that each set $E\in\mathbb{U}(\mathcal{A})$ is equinumerous to
some set of ordinals, let $\thg$ be an  ordinal greater than all labels 
of the set 
 $E\in\UU$: we define a suitable subset of $\Og_{2^\thg}$ 
 equinumerous to $E$. 
 
 Let $n_{k}$ be the number of elements of $E$ 
 with label $k$, so $\nk(E) =\sum_{k<\thg}n_{k}$.
 
 Let  
$a_{kh}=c_{2^\thg\odot k+h}=\begin{cases}
1\!\!&if\,\,\,\,0\le  h<n_{k} \ and\  k<\thg\,\\
0\!\!&otherwise
\end{cases}.$
 
 Then $\sum_{h}a_{kh}=n_{k}$ for all $k$, and 
%  combining the axioms 
%  \textbf{S.2}.1-2, 
 $\sum_{l}c_{l}=\sum_{k}\sum_h a_{kh}=\sum_{k}n_{k}$. Clearly the latter is the numerosity of the set 
 $$L=\{l<2^\thg\thg\mid l=2^\thg k+h,\, h<n_{k},\, n_{k}>0\},$$
 and so $\nk(E)=\nk(L)$.
 
 In particular, every transfinite sum $\sum_{k}n_{k}$ of nonnegative integers is the 
 numerosity of a subset of some $\Og_{\ag}$. Then Theorem \ref{S=B}
%  together with 
%  the assumptions that $\UU$ is full and includes $\Og$ 
  yields that any 
 periodic
 $\Og$-sequence in $\B(\Og,\Z)$ is the difference between the counting 
 functions of two sets of ordinals. Hence $\nk[\UU(\Og)]$ generates 
 the whole $\kZ$.
   
    \qed
\end{proof}

\begin{rem}
Another interesting operation on labelled sets is
\begin{itemize}
\item  \textbf{Finite parts} - 
The \emph{set of the finite parts }of the labelled set
$\left(E,\ell\right)$
is the labelled set 
$\left(\mathcal{P}_{\omega}(E),\,\vee\ell\right)$,\footnote{~
Here $\mathcal{P}_{\omega}(E)$ denotes the family of all finite subsets 
of $E$, also denoted by $[E]^{<\og}$.}
  where $\vee\ell$ is defined as follows:
\[
\vee\ell\left(\left\{ a_{1},...,a_{n}\right\} \right)=\bigvee_{k=1}^{n}\ell(a_{k})
\]
\end{itemize}

If, as usual in axiomatic set theory, we identify the \emph{ordered pair} $(a,b)$ with the ``Kuratowski doubleton" $\left\{ \{a\},\left\{ a,b\right\} \right\} $,
then, by the above definition,
\[
\vee\ell(a,b)=\vee\ell\left(\left\{ \{a\},\left\{ a,b\right\} \right\} \right)=\ell(a)\vee\ell(b).
\]
Hence, if $\left(E_{1},\ell_{1}\right)$ and $\left(E_{2},\ell_{2}\right)$
are labelled sets, their Cartesian product is precisely the labelled set
$
\left(E_{1}\times E_{2},\ell_{1}\vee\ell_{2}\right).
$

It is easily seen that, if the family $\UU$  of the Euclidean numerosity theory $(\UU,\nk)$ is closed also under the operation of finithe parts, then one has
\begin{description}
\item[Finite parts] ${~~}\bullet\  \nk(\mathcal{P}_{\omega}(A))=2^{\nk(A)}.$
\end{description}
\end{rem}

\subsection{Euclidean numerosity v/s Aristotelian size}\label{eucomp}
The Eucldean numerosity theory $(\UU(\Og),\nk)$ of the preceding section might be compared with the ``Aristotelian" numerosity theory introduced in \cite{BDNF1}, where in particular every set of ordinals $A$ receives a numerosity $\sk(A)$ belonging to the non-negative part of an ordered ring $\Ak$ so that the following conditions are fulfilled: 
\begin{description}
\item[$\sp$ ]  If $A\cap B=\0,$ then $\sk(A\cup B)=\sk(A)+\sk(B);$
\item[$\up$]  $~\sk(\{\xi\})=1$ for all $\xi$;
\item[$\fpp$] If $A\incl \Og_{\thg^\cg}$ and $B\incl\Og_{\thg^h}$ with  $\thg=2^\thg$ and $h<\og$,
 then $$\sk(A)\cdot \sk(B)=\sk(\{\thg^\cg\odot \bg+\ag\mid \ag\in A, \bg\in B\}).$$
(For convenience we use here the notation of this article. Moreover we restrict ourselves to sets of \emph{accessible ordinals} less than $\Og$, so as to avoid the use of \emph{proper class-functions}. )
\end{description}

Theorem \ref{numeucl} above immediately implies that these properties are fulfilled by the Euclidean numerosity. 
Conversely, one can define  a \emph{transfinite sum  of non-negative
integers }in the ring $\Ak$, by  assigning to any such sum an appropriate set of ordinals, following the proof of 
Theorem \ref{numeucl}:
$$\sum_{k<\thg}n_k=\sk(L),\ \  where\ \ 
L=\{2^\thg\odot k+m\mid 0\le m<n_k\ne 0,\, k<\thg\}$$

This transfinite sum may be uniquely extended to arbitrary integers by
considering separately positive and negative summands.
We may assume w.l.o.g. that the ring $\Ak$ is generated by the set of 
the numerosities, \ie\ that any $\ak\in\Ak$ is the \emph{difference} 
$\sk(A)-\sk(B)$  of the numerosities of two sets of ordinals: 
then  $\Ak$  becomes the set of all transfinite sums of integers.

It turns out that the axiom \CA\  holds in $\Ak$ for the transfinite sums of integers.
In fact, put $A_{\ag}=\{x\in A\mid x\qincl\ag\}$:
then $|A_{\ag}|=|B_{\ag}|$ for all $\ag\lincq\bg$ implies 
$A\7 A_{\bg}=B\7 B_{\bg}$ and $|A_{\bg}|=|B_{\bg}|$, hence 
$\sk(A)=\sk(B)$.

Moreover the property $\fpp$ can be used in defining the numerosity of Cartesian products. Then a natural strengthening of the property $\fpp$ could be the assumption that  $\sk$ is definable on  \emph{arbitrary sets of pairs} of ordinals by putting
$$\sk\left(E\right)=
%\sk\left(\bigcup_{h<\dg}( \{\thg\odot h\}+ E_h)\right)=
\sk(\{2^\thg\cdot \bg+\ag\mid  (\ag, \bg)\in E\})\ \ \rm{for \  all}\ \  E\incl\Og_{\thg}\times \Og_{\thg}\footnote{~Notice that we use here the natural product of ordinals, which agrees with the product of the field $\EE$, in order to avoid  \emph{absorption phaenomena}.
}
$$
On the other hand, the natural labelling of pairs given in the preceding subsection would give to $E$ the numerosity
$$\sk(E)=\sum_k |E(k)|\ \ \ \mathrm{where}\ \ \
E(k)=\{(\ag,\bg)\in E\mid \ag\vee\bg=k\},
$$
%   putting  $E(k)=\{(\ag,\bg)\in E\mid \ag\vee\bg=k\}$
\ie, according to the above definition,
$$\sk(E)=\sk(\{2^\thg\cdot k+m\mid  0\le m<\, |E(k)|, k<\thg\}.$$
So we are led to the following 
\begin{defn}\label{mult}
Call \emph{multiplicative} an Aristotelian numerosity $\sk$ such that  following equality holds
for all 
$E\incl\Og_{\thg}\times \Og_{\thg}$
\begin{description}
\item[$\dsp$] $~\sk(\{2^\thg\cdot \bg+\ag\mid  (\ag, \bg)\in E\})=
\sk(\{2^\thg\cdot (\ag\vee \bg)+m\mid  0\le m<|E(\ag\vee\bg)|\}.
~~~~~~~$
%where $~~~~~E(\cg)=\{(\ag,\bg)\in E\mid \ag\vee\bg=\cg\}.$
\end{description}
\end{defn}
Clearly, the equalities $\dsp$ allow for consistently extending $\sk$ to all sets of pairs of ordinals,  obtaining in particular that $\sk(E\times F)=\sk(E)\cdot\sk(F)$.

Then we have

\begin{thm}\label{compeucl} Let $(\mathbb{U},\sk)$
be a multiplicative Aristotelian numerosity.

Let $\psi$ map each set $E\in\UU$ to the counting function $\psi_E\in \B(\Og,\N)$ such that $\psi_E(\ag)=|E_{\ag}|$, where $E_{\ag}=E\cap\{\bg\in\Og\mid\bg\qincl\ag\}$. 

Let $\Ik$ be the kernel of the homomorphism $J:\B(\Og,\R)\to \EE$ of Thm. \ref{comp}, and let 
$\ik=\Ik\cap \B(\Og,\Z)$ be the restriction of $\Ik$ to
$\B(\Og,\Z)$. 

Then $\ik$ is  generated by the differences $\psi_E-\psi_F$ with $\sk(E)=\sk(F)$, and 
 there exists a unique ordered ring  isomorphism \ $\sg:\Ak\to\,\kZ\incl\EE$
 that makes the following diagram commute\footnote{~here $\imath$ is the inclusion map, and $\pi_{\ik}$ and $\pi_{\Ik}$ are the projections onto the quotients modulo the ideals  $\ik$ and $\Ik$, respectively,}

\bigskip
\begin{center}
\begin{picture}(185,70)
   \put(0,0){\makebox(0,0){$\Ak$}}
   \put(90,0){\makebox(0,0){$\B(\Og,\Z)/\ik \cong\kZ$}}
   \put(195,0){\makebox(0,0){$\EE\cong\B(\Og,\R)/\Ik$}}
   %\put(200,0){\makebox(0,0){$\cong\B(\Og,\R)/\Ik$}}
   \put(0,70){\makebox(0,0){$\UU$}}
  % \put(45,35){\makebox(0,0){\diser}}
   \put(90,70){\makebox(0,0){$\B(\Og,\Z)$}}
   \put(190,70){\makebox(0,0){$\B(\Og,\R)$}}
   \put(30,6){\makebox(0,0){$\sg$}}
   \put(143,6){\makebox(0,0){$\imath$}}
   \put(45,76){\makebox(0,0){$\psi$}}
   \put(138,76){\makebox(0,0){$\imath$}}
   \put(-8,35){\makebox(0,0){$\sk$}}
    \put(98,35){\makebox(0,0){$\pi_{\ik}$}}
     \put(198,35){\makebox(0,0){$J$}}
   
   %\put(100,35){\makebox(0,0){$\pi$}}
   \put(10,0){\vector(1,0){42}}
    \put(130,0){\vector(1,0){26}}
   \put(10,70){\vector(1,0){60}}
   \put(110,70){\vector(1,0){58}}
   \put(0,60){\vector(0,-1){50}}
   \put(90,60){\vector(0,-1){50}}
   \put(190,60){\vector(0,-1){50}}
\end{picture}
\end{center}
\bigskip

%by putting 
%\[
%\mathfrak{n}(E)=\sg(\sk(E))
%\]
%the following holds: 
In particular
 $\nk=\sg\circ\sk$ is the  Euclidean numerosity function, and  
 the Euclidean field $\EE$ is  uniquely determined by $\sk$.
 \end{thm}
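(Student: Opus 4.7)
The plan is to define $\sg$ on the generators of $\Ak$ by $\sg(\sk(E))=\nk(E)=J(\psi_E)$, extend $\Z$-linearly (using the assumption that every element of $\Ak$ is a difference of numerosities), and then successively establish well-definedness, surjectivity, and injectivity. For well-definedness I check that each axiom that $\sk$ is required to satisfy translates into an identity already available for the Euclidean numerosity $\nk$ in $\EE$: additivity $\sp$ is a special case of Sum-Difference in Theorem \ref{numeucl}; $\up$ is the trivial computation $\nk(\{\xi\})=\sum_k\chi_{\{\xi\}}(k)=1$; $\fpp$ follows from the Cartesian-product property in Theorem \ref{numeucl} together with translation invariance realizing the lexicographic encoding; and $\dsp$ unravels through the Product Axiom \PA\ and the double-sum definition of Section \ref{axiom} into the equality $\nk(A)\nk(B)=\sum_{\ag,\bg}\chi_A(\ag)\chi_B(\bg)=\sum_j y_j$ with $y_j=|(A\times B)(j)|$. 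The diagram then commutes by construction, since $\sg\circ\sk=J\circ\psi=\pi_{\ik}\circ\psi$.

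For surjectivity of $\sg$ onto $\kZ$, any $\xi\in\kZ$ has the form $J(\fg_x)$ for some $x\in\S(\Og,\Z)$ by Theorem \ref{S=B}. Splitting $x=x^+-x^-$ into nonnegative parts and applying the explicit construction from the proof of Theorem \ref{numeucl} to each part, I obtain sets $A,B\subseteq\Og$ with $\nk(A)=J(\fg_{x^+})$ and $\nk(B)=J(\fg_{x^-})$, whence $\xi=\sg(\sk(A)-\sk(B))$.

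The main obstacle is injectivity of $\sg$, equivalently the claim that $\ik$ is generated (as an abelian group) by the differences $\psi_E-\psi_F$ with $\sk(E)=\sk(F)$; the inclusion $\supseteq$ is immediate from well-definedness. For the reverse inclusion I introduce the ring homomorphism $\Phi:\B(\Og,\Z)\to\Ak$ defined on counting functions by $\Phi(\fg_x)=\sum_k x_k$, the transfinite sum being computed inside $\Ak$ via the construction of Theorem \ref{numeucl}. Then $\Phi$ is surjective and $\sg\circ\Phi=J$, so $\ker\Phi\subseteq\ker J=\ik$. I then argue that the induced transfinite sum on $\Ak$ satisfies the three Euclidean axioms \LA\ (trivially), \CA\ (exactly the content of the paragraph preceding the theorem, derived from $\sp$ and the cone construction) and \PA\ (derived from $\dsp$ together with $\fpp$), so by the argument of Theorem \ref{comp} restricted to integer-valued periodic functions these properties combined with the generation of $\Ak$ by numerosities force $\ker\Phi=\ik$, whence $\Ak\cong\B(\Og,\Z)/\ik\cong\kZ$. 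The crux is that the axioms on $\sk$ must be strong enough to capture the full ultrafilter $\U(\Og)$ controlling $\ik$ and not merely the subfilter generated by cones; this is where $\dsp$ is essential, since it encodes the interaction between the multiplicative structure and the natural lattice operation $\vee$ on ordinal labels. Uniqueness of $\sg$ follows because it is already prescribed on the generators of $\Ak$ by the commutativity of the diagram, and the equality $\nk=\sg\circ\sk$ then holds by construction.
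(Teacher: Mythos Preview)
Your argument has a genuine gap, and it stems from a misreading of what the theorem is claiming. You treat $\EE$ (and hence $\ik$) as fixed in advance, independently of $\sk$, and then try to prove that the map $\sg(\sk(E))=\nk(E)$ is a well-defined isomorphism onto $\kZ$. But neither direction goes through as written. For well-definedness you verify that $\nk$ satisfies the Aristotelian axioms $\sp,\up,\fpp,\dsp$; this only shows that the \emph{universal} Aristotelian ring maps into $\EE$, not that the specific $\Ak$ does. A particular model $\sk$ may satisfy further identities $\sk(E)=\sk(F)$ not forced by the axioms, and nothing you wrote rules out $\nk(E)\ne\nk(F)$ in that case. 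Symmetrically, for injectivity you claim that \CA, \LA, \PA\ in $\Ak$ ``force $\ker\Phi=\ik$'', invoking Theorem~\ref{comp}; but Theorem~\ref{comp} only shows that $J$ is a homomorphism, it says nothing about uniqueness of its kernel. Different Euclidean ultrafilters yield different ideals $\ik$, all compatible with the axioms, so $\dsp$ alone cannot pin one down. You correctly sensed this was the crux, but the sentence resolving it is hand-waving.

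The paper's route is different and avoids this circularity. Rather than mapping $\Ak$ into a pre-existing $\EE$, it shows that the transfinite sum of integers defined \emph{inside} $\Ak$ (via the construction preceding the theorem) satisfies the Euclidean axioms \CA, \DA, \PA. The point of the last clause of the theorem---``the Euclidean field $\EE$ is uniquely determined by $\sk$''---is that $\Ak$ is then itself the ring of nonstandard integers of \emph{some} Euclidean field, and that field is the $\EE$ of the statement. With $\ik$ now \emph{defined} as the kernel of $\Phi:\B(\Og,\Z)\to\Ak$, the equivalence $\sk(E)=\sk(F)\iff\psi_E\equiv\psi_F\bmod\ik$ is a tautology, and $\sg$ is the induced isomorphism on the quotient. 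Your surjectivity paragraph is fine and matches the spirit of Theorem~\ref{numeucl}, but the architecture of your argument needs to be inverted: verify the axioms in $\Ak$ first, and let $\EE$ come out of that.
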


\noindent \begin{proof}\par 
Let $E$ be a set of ordinals: then $\sum_k\chi_E(k)$ is the Euclidean numerosity of $E$, by definition, and the corresponding counting function is precisely $\psi_E$. 
On the other hand, the transfinite sum of nonnegative integers has been defined above in a consistent way also inside the ring $\Ak$, so $\sk(E)=\sum_k\chi_E(k)$. 

We have already remarked  that the axiom \CA\ holds in $\Ak$ for transfinite sums of integers. Moreover the equality $\dsp$\ yields that also both axioms \DA\ and \PA\ hold. It follows that $\sk(E)=\sk(F)$  if and only if $\psi_E\equiv\psi_F\!\mod\ik$, and $\sg$ is uniquely and consistently defined by $\sg(\sk(E)-\sk(F))= [(\psi_E-\psi_F)\!\mod\ik$.

\par \qed \end{proof}

%\bigskip{}

\begin{rem}
The last point of the above theorem shows that the Euclidean field
$\mathbb{E}$ arises quite naturally from a numerosity theory that
satisfies very reasonable assumptions which extend and interpret the 
Euclidean common notions.\end{rem}

 %     {\large \textbf{DECIDERE COSA SI VUOL METTERE DEL RESTO}}
 % 

\section{The existence of the field of the Euclidean 
numbers}\label{constr}
In this section we ground on the particular kind of ordinals,
that we called  \emph{complete ordinals} in subsection \ref{fin}:
they will be used as convenient  ``check points'' for the values 
of the transfinite sums, and of the corresponding counting functions,
in order to produce a model of the field of the Euclidean numbers.

\subsection{The Euclidean ultrafilter $\mathcal{U}$}\label{ult}

Let 
$\eg=2^\eg$ be an  $\eg$-number in $\Og$, and let 
$\mathcal{F}$ be a filter on the set 
$\Og_{\eg}$ of the ordinals less than $\eg$. 
Given a function $G:\Og\to\F$ such that $G(\ell)\incl C(0)$ for all $\ell\in\Og$,
%an indicized family $G=\la G_h\mid h\in\Og\ra$ of sets  $G_h\in\mathcal{F}$, 
put
\[
\_G=
\{\ag=\sum_{\ell\qincl\cg_0} \eg^\ell\odot\cg_\ell\mid\, \forall\ell\qincl\cg_0\,( \cg_\ell\in G(\ell)) \,\} \ 
\]
%So $$\ag\in \_F\ \  \Iff\ \  \exists \.\, \ag\lincq\cg,$$ hence
so that
$$i=\eg^h\odot k\qincl\ag=\sum_{\ell\qincl{\cg_0}} \eg^\ell\odot\cg_\ell\in\_G\ \ \Iff\ \  \,h\qincl\cg_0\ \&\  k\qincl\cg_h$$
%\in G(h)\  \&\  i=2^{\eg\odot h\oplus k}).$$

Clearly one has $\_G\cap \_H=\_K$, where $K(\ell)=G(\ell)\cap H(\ell)$, hence the family 
$\{\_F\mid F:\Og\to\F)\}$ is closed under finite intersection, so it generates a filter
$\_\F$ on $\Og$.
Similarly, the intersections $\_F\cap\Og_\eta$ generate a filter $\_\F_{|\eta}$ on $\Og_\eta$ for all $\eg$-numbers $\eta=2^\eta>\eg$.

\medskip

Since each ordinal  $\ag\in\Og$ is viewed in this context as the finite subset $L_\ag\incl\Og$, we call \emph{fine} a filter on $\Og_\eta$ if it is \emph{fine as a set of finite subsets} of $\Og_\eta$, \ie\ if it contains all intersections $C(\bg)\cap\P_\og(\Og_\eta)=\{\ag<2^\eta\mid\bg\qincl\ag\}$.
It is worth noticing that this identification works very well  for $\eg$-numbers $\eg=2^\eg$, because then $\ag<\eg$ if and only if $L_\ag\incl\Og_\eg$.

In Subsection \ref{fin}, we introduced the  filter $\QQ$ generated by the  family 
of the $\qincl$-cones $C(\bg)=
\{\ag\mid \bg\qincl\ag\,\}$  together with the families $D(\eta,h)$ of the
 $(\eta,h)$-complete ordinals
(see Lemma \ref{compl}), and  the corresponding filter  $\QQ_\eg$ restricted to $\Og_\eg$.
%For technical reasons, we restrict the ordinals $\eta$ admissible in the sets  $D(\eta,h)$ to the case when  $\eta=2^\dg$ is a pure power of $2$.

We  give now an inductive construction of a \emph{``Euclidean ultrafilter"} on $\Og$. 
\begin{thm}
\label{thm:figo}There exist a fine ultrafilter 
$\mathcal{U}$ 
on $\Omega$, and fine ultrafilters $\mathcal{U}(\eg)$ on $\Og_{{\eg}}$, for all $ \eg=2^\eg\in\Og$,  such that

%\medskip

\begin{itemize}
 
\item
${}\ \ \ \ \ \ \ \ 
\QQ\incl\mathcal{U}\ \ \ \
\ and \ \ \ \ \ 
\QQ_{\eg}
\incl\mathcal{U}(\eg);
$

%\medskip
 
%\item $~~~~~~~~G(\eg)=\{\ag=\sum_h \eg^h\cdot\cg_h\mid \forall k<\eg\, (k\qincl\cg_0\ \Iff\ 1\qincl\cg_k)\}\in\U$;
%for  
%%$\eg<\eta$ and 
%$Q\in\U(\eg)$  put 
%$$Q(\eg)=\{\ag\in\Og\,\mid\, \exists \bg\in Q\,.\, L_\ag\cap\Og_\eg= L_\bg\,\},$$ and\ $$
%\QQ(\eg)=\{Q(\eg)\,\mid\,Q\in\U(\eg)\,\}, \ \ \
%\QQ(\eg,\eta)=\{Q(\eg)\cap\Og_\eta)\,\mid\,Q\in\U(\eg)\,\};$$
%then 
\item ${}\ \ \ \ \ \ \ \ 
  \_{\U(\eg)}\incl\U\  \ \ and\ \ \ \_{\U(\eta)}_{|\eg}
  %=\{Q\cap\Og_\eg\mid Q\in\_\U(\eta)\}
  \incl\U(\eg)\ \ \   \ for\ all\ \ {\eta=2^\eta<\eg}.$
%U_{\eta}\left(\langle Q_{k}\rangle \right)=
%\bigcup_{k\in\Omega}U_{\eta}\left(k, Q_{k} \right)\in\mathcal{U}(\Omega)
%\ \ \]
%and
%\[ \forall \eta'>\eta\,.\,\, \ U_{\eta}\left(\langle Q_{k}\rangle 
%\right)_{|\eta'}
%%=\{\ag_{|\eta'}\mid \ag\in Q\}
%\in\mathcal{U}(\eta');

%\bigskip

\end{itemize}
\end{thm}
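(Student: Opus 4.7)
The plan is to build the ultrafilters by transfinite recursion along the club of fixed points $\eta=2^{\eta}$ in $\Og$, and then extract $\U$ at the end. At each stage $\eta$ I set
\[
\mathcal{G}_{\eta} \;=\; \QQ_{\eta}\,\cup\,\bigcup_{\eg<\eta,\ \eg=2^{\eg}}\{\,\_F\cap\Og_{\eta}\,:\,F\in\U(\eg)^{\Og}\,\},
\]
verify that $\mathcal{G}_{\eta}$ has the finite intersection property, and let $\U(\eta)$ be any ultrafilter on $\Og_{\eta}$ extending it (which exists by Zorn). The required inclusions $\QQ_{\eta}\incl\U(\eta)$ and $\_{\U(\eg)}_{|\eta}\incl\U(\eta)$ for $\eg<\eta$ are then built in by construction, and fineness of $\U(\eta)$ holds because the $\sqsubseteq$-cones $C(\bg)\cap\Og_{\eta}$ with $\bg<\eta$ already belong to $\QQ_{\eta}$.

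\textbf{Base case.} The smallest fixed point is $\og$: by Remark \ref{qeta}, $\QQ_{\og}$ has FIP, and any ultrafilter extending it serves as $\U(\og)$.

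\textbf{Inductive step.} A typical finite intersection drawn from $\mathcal{G}_{\eta}$ has the form
\[
D(E,H;\bg)\,\cap\,\bigcap_{i=1}^{n}\bigl(\_{F^{(i)}}\cap\Og_{\eta}\bigr),
\]
with $E,H\incl\Og_{\eta}$ finite, $\bg<\eta$, fixed points $\eg_{1}<\dots<\eg_{n}<\eta$, and $F_{h}^{(i)}\in\U(\eg_{i})$ for every $h\in\Og$. I would first exploit the FIP of each individual $\U(\eg_{i})$ to choose a single $\cg_{i}<\eg_{i}$ lying in $F_{h}^{(i)}$ for each of the finitely many indices $h$ that actually contribute to witnessing membership in $\_{F^{(i)}}$. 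Then I would run the recursive closure procedure of Lemma \ref{compl}, starting from the finite set $L_{\bg}$ together with the sets $L_{\eg_{i}^{h}\odot\cg_{i}}$ encoding the chosen witnesses, and closing under the completeness conditions imposed by $E$ and $H$. Since $\eta$ is a fixed point of $2^{(\cdot)}$ and all input data lie below $\eta$, the closure stays inside $\Og_{\eta}$, and the resulting ordinal is a common element of every set in the intersection.

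\textbf{Final step and main obstacle.} Once every $\U(\eta)$ is built, I define $\U$ as any ultrafilter on $\Og$ extending $\QQ\cup\bigcup_{\eta=2^{\eta}}\_{\U(\eta)}$; the same simultaneous-refinement argument, now run without the bound $\eta$ on the choice of $\bg,E,H$, shows this union has the FIP. The main obstacle is precisely this simultaneous-FIP verification: one must enlarge the Lemma \ref{compl} construction to \emph{absorb in one single ordinal} witnesses $\cg_{i}$ drawn from $n$ independently built ultrafilters $\U(\eg_{1}),\dots,\U(\eg_{n})$ living at different scales, while respecting the completeness conditions encoded by $E$ and $H$ and keeping the whole construction bounded by $\eta$. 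Everything else in the theorem is immediate from the definition of $\mathcal{G}_{\eta}$ and from the presence of the $\sqsubseteq$-cones in $\QQ_{\eta}$.
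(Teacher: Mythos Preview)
Your global architecture — transfinite recursion along the fixed points $\eta=2^{\eta}$, collect $\QQ_{\eta}$ together with the restricted filters $\_{\U(\eg)}_{|\eta}$, check FIP, extend by Zorn, and repeat once more for $\U$ on $\Og$ — is exactly the paper's. The base case and the fineness remark are fine.

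The gap is in your FIP verification. You propose to pick witnesses $\cg_{i}\in\U(\eg_{i})$, throw the finite sets $L_{\bg}$ and $L_{\eg_{i}^{h}\odot\cg_{i}}$ together, and then close up with the procedure of Lemma~\ref{compl}. But membership in $\_{F^{(i)}}$ is not a $\sqsubseteq$-cone condition: by definition $\ag\in\_{F^{(i)}}$ means that $\ag$ \emph{equals} $\eg_{i}^{h}\odot\cg$ for some $h$ and some $\cg\in F^{(i)}_{h}$, i.e.\ $L_{\ag}$ lies entirely inside one $\eg_{i}$-block $\{\eg_{i}\odot h+k:k<\eg_{i}\}$ and projects down to some $L_{\cg}$ with $\cg\in F^{(i)}_{h}$. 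The closure of Lemma~\ref{compl} adds new ordinals of the form $\eta\odot h'+k$ for every $(\eta,h')\in E\times H$, and these will in general fall outside the single $\eg_{i}$-block, so the resulting $\ag$ is no longer of the form $\eg_{i}^{h}\odot\cg$ at all. In short: Lemma~\ref{compl} enlarges $L_{\ag}$, while the $\_{F}$-condition is a rigid shape constraint on $L_{\ag}$; the two do not compose.

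The paper avoids this by never trying to satisfy several $\_{F^{(i)}}$-constraints ``by hand''. Instead, at the successor step $\eta_{j}\to\eta_{j+1}$ it writes the sought $\ag$ in the form $\ag=\eta_{j}^{h}\odot\cg$ with $\cg<\eta_{j}$, observes that then $i\fin\ag\iff i=\eta_{j}\odot h+k$ for some $k\fin\cg$, and reduces the condition $\ag\in D(E,H;\bg)\cap\_{U}$ to the condition $\cg\in D(E\setminus\{\eta_{j}\},H;\bg)\cap U_{h}$. The latter intersection is nonempty because both sets lie in $\U(\eta_{j})$ — the first by the \emph{induction hypothesis} $\QQ_{\eta_{j}}\incl\U(\eta_{j})$, the second by definition of $U$. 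Thus the FIP check is not a direct construction but a one-step reduction to the previous stage; the constraints coming from smaller $\eg<\eta_{j}$ are absorbed automatically because $\_{\U(\eg)}_{|\eta_{j}}\incl\U(\eta_{j})$ is already part of the induction hypothesis. This inductive ``peeling'' is the idea your proposal is missing.
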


\begin{proof}
    Let $\mathcal{U}(\omega)$ be a fine ultrafilter on $\Og_{\og}=\mathbb{N}$,  containing the set 
    
    $\{\ag<\og\mid \exists n\in\N.\, \ag=2^n-1\}$ (so $L_\ag=\{0,\ldots,n-1\})$.
    
Let $\la\eg_{j}\ra_{j\in\Og}$ be an enumeration of the $\eg$-numbers in $\Og$. We proceed to define ultrafilters $\U({\eg_j})$ on $\Og_{\eg_j}$ by induction on $j$.

Assume that ultrafilters $\mathcal{U}(\eg_{s})$ be conveniently defined for 
$s\le j$, and consider the filters
$\ 
\QQ_{\eg_{j+1}}  \mathrm{and}\ \
\_{\U({\eg_{s}})}_{|\eg_{j+1}}\, (s\le j)
$: 
each of them is obviously closed under finite intersections, so we need only to prove
%Each of them is closed under finite intersection: the 
%former  by
%Lemma \ref{lem:tre}, and the latter by Lemma~\ref{compl}.
%(actually  the former family and $\C$ are closed under finite intersections.)
 that $D\cap\_U \ne\0$ for all $D=D(E,H;\bg)\in\Ql_{\eg_{j+1}}$ and all $\_U$ corresponding to  $U:\Og_{\eg_{j+1}}\to\U(\eg_j)$.
 
Given $U:\Og_{\eg_{j+1}}\to\U(\eg_j)$ and $E,H\incl \Og_{\eg_{j+1}}$, $\bg<\eg_{j+1}$, we have to find  an ordinal $\ag=\sum_{\ell\qincl\cg_0}{\eg_{j}^\ell}\odot\cg_\ell>\bg$ with $\ell<\eg_{j+1}$ and $\cg_\ell\in U(\ell)$,
% \ie\ an $\ag$
such that, for all $h\in H$, all $\eta\in E$, and all $k<\eta$
$$(2^ \eta)^h \odot k \qincl\ag=\sum_{\ell\qincl\cg_0}{\eg_{j}^\ell}\odot\cg_\ell\ \ \Iff\ \ k\qincl\ag=\sum_{\ell\qincl\cg_0}{\eg_{j}^\ell}\odot\cg_\ell\big.$$
Now distinguish four cases
\begin{enumerate}
  \item $\eta, h<\eg_j$, hence $(2^\eta)^h\odot k<\eg_j$, and then 
 $ (2^ \eta)^h \odot k \qincl\ag\ \ \Iff\ \ (2^ \eta)^h \odot k\qincl\cg_0;$
  \item $\eta=\eg_j$, hence $(2^ \eta)^h=\eg_j^h$, and then  
  $ (2^ \eta)^h \odot k \qincl\ag\ \ \Iff\ \ h\qincl \cg_0,\ k\qincl\cg_h;$ 
    \item $\eta<\eg_j,\ \eg_{j+1}> h\ge\eg_j$, hence $h=\eg_j\odot q_h\oplus r_h $, and $(2^\eta)^h=\eg_j^{q_h}\odot 2^{ \eta\odot r_h}$, and then 
 $ ~(2^ \eta)^h \odot k \qincl\ag\ \ \Iff\ \ q_h\qincl\cg_0,\ 2^{\eta\odot r_h}\odot k\qincl\cg_{q_h};$

   \item $\eg_{j+1}>\eta>\eg_j$, hence $2^\eta=\eg_j^{i}$ for some $i<\eg_{j+1}$, and then
   
   $~~~~~~~~~~~~~~ (2^ \eta)^h \odot k \qincl\ag\ \ \Iff\ \ i\odot h\qincl\cg_0,\   k\qincl\cg_{i\odot h}.$
\end{enumerate}
Let $E_1=E\cap\Og_{\eg_j}$, $H_1=H\cap\Og_{\eg_j}$, so $D(E_1,H_1)\in\QQ_{\eg_j}$.
Then one can choose:
\begin{enumerate}
  \item 
 %$L=\{0\}\cup H\cup \{q\mid case\ 3\}\cup\{i\odot h\mid case\ 4\}$, and consequently 
$\cg_0\in D(E_1,H_1)$ in order to meet case $1$.
Moreover one has to add finitely many elements to $L_{\cg_0}$ so as to have that $h, q_h,i\odot h\qincl\cg_0$ , in accord to cases $2,3,4$, for the respective values of  $h\in H$.

 Now one has to pick 
\item  $\cg_h=\cg_0$ for all $h\in H$, according to case $2$, since so $\ag\in D(\eg_j,H)=D(\eta,H)$; 
 \item 
$\cg_{q_h}=\cg_0$ for  $h=\eg_j\odot q_h\oplus r_h$, according to case $3$, since then\\
$~~2^{\eta\odot r_h}\odot k\qincl\cg_{q_h}=\cg_0\ \Imp\ k\qincl\ag$;
\item
$\cg_{i\odot h}=\cg_0$ for $2^\eta=\eg^i_j$, according to case $4$, since so $\ag\in D(\eta,h)=D(\eg_j,i\odot h)$. 
\end{enumerate} 
%
%
%$$\ag={\eta_{j}^h}\odot\cg\ \ \Imp\ \  \eta_{j}\odot h +k \fin\ag\ \ \Iff\ \ k\fin\cg\in U_h$$
%
%So we pick any $\cg\in D(E\7 \{\eta_j\},H;\bg)\cap\, U_h$, which is nonempty by induction hypothesis.
%%\[
%%U_{\eta_{j}}(k,Q_{k}) \
%%\cap\,
%%Q\,  \cap\ \Og_{2^{\eta_{j+1}}}\neq\emptyset\ \
%%%\mathrm{for all}\ \ 
%%%Q_{k} \in \mathcal{U}(\eta_{j})\ \mathrm{and}\ \bg<\og^{j+1}.
%%\]
%%Pick $\bg=2^{\eta_{j}\odot h}\odot\cg$, with $h<\og$ and $\cg\in Q\cap Q_h\ne\0$, by induction hypothesis. Then $k=h$ works, and
 So we can take $\mathcal{U}(\eta_{j+1})$ to be any ultrafilter
containing both families of sets. 

\medskip
For limit $j$, assume that the ultrafilters $\mathcal{U}(\eta_{s})$ have been conveniently 
defined for all  $s< j$. Then, for all $r<j$, all families of filters
\[
\_{\U({\eg_s})}\ \ \textrm{and}\ \QQ_{\eg_s},\ \ \ s<r
\]
%\[
%\bigcup_{k\in\Omega}\left(2^{\eta_s}\odot k+Q_{k}\right)\cap C(\bg)\, 
%\cap\,\Og_{2^{\eta_r}},\ \ \ Q_{k} \in \mathcal{U}(\eta_{s}),\  \bg<2^{\eta_{r}},\
%\mathrm{and}\ \
%s<r<j
%\]
are included in $\U(\eg_{r})$, by induction hypothesis, hence all of them share together the finite intersection property. Then also the union of all
 these families for $r<j$ has the \FIP , because $\eg<{\eg_j}$ implies
$\eg<{\eg_r}$ for some $r<j$. So any  ultrafilter 
containing this union can be taken to be 
$\mathcal{U}(\eg_{j})$.

%If $\eta_{j}=\dg_{t}^{n}$, then Point 4 of Proposition \ref{sqinc} 
%provides the existence of elements less than $\eta_{j}$ in 
%$\bigcup_{k\in\Omega}\left(\og^{s}\odot k+Q_{k}\right)\cap Q[\bg]$.

\smallskip

Finally, after having  defined the ultrafilter $\U(\eg_{j})$ for all $j\in\Omega$, pick as
$\mathcal{U}$  an ultrafilter on $\Og$ containing the filters\,
$
\_{\U({\eg_j})} \
%Q_{k} \in \mathcal{U}(\eta_{j})\ \
\mathrm{and}\ 
\QQ_{{\eg_{j}}},
$
for all $j\in\Og$.  The finite intersection property follows by the same argument 
 of the 
 case of limit $j$, 
and all 
conditions are fulfilled. 

 \qed
\end{proof}
%\textbf{FIN QUI}
\bigskip

We name \emph{Euclidean} an ultrafilter satisfying the conditions of
$\mathcal{U}(\Omega)$, since any such ultrafilter
provides a model of the field of the Euclidean numbers,
as we shaw in the next subsection.

%Remark that any Euclidean ultrafilter is concentrated on the set $\Qk$ 
%of the complete ordinals.

\subsection{A construction of the field of the Euclidean 
numbers}\label{cons}
We may now present a construction of a field enjoying the properties 
of the Euclidean numbers 
axiomatized in Section 2.
\begin{itemize}
  \item Let{ $\mathbb{R}^{\Og}$} \foreignlanguage{english}
be {the algebra
of all real valued functions on $\Og$, and let
$$\mathscr{B}(\Og,\,\mathbb{R})= \{\varphi_\xb\in\mathbb{R}^{\Og}
\mid\,\xb\in\S(\Og,\R))\}
$$
be the subalgebra of the counting functions.
}Recall that, by Theorem \ref{S=B}, every element of $\B(\Og,\R)$ is $j$-periodic for some $j\in\Og$, and conversely any  $j$-periodic $\psi\in\R^\Og$ agrees with one and only one $\fg_\xb\in\B(\Og,\R)$.
% on all ordinal powers $\bg=2^\ag$.

  \item Let $\mathcal{U}$ be a Euclidean ultrafilter on $\Og$,  
let $\Ik$ be the corresponding maximal ideal of 
$\mathscr{B}(\Og,\,\mathbb{R})$
\[
\mathfrak{I}:=\left\{ \varphi\in\mathscr{B}(\Og,\,\mathbb{R})\,|\,
\exists Q\in\mathcal{U}\,\forall\bg\in Q\, .\,\varphi(\bg)=0\right\} 
\]
 and  let 
\[ J\,:\,\mathscr{B}(\Og,\,\mathbb{R})
\longrightarrow\,
\mathscr{B}(\Og,\,\mathbb{R})/\mathfrak{I}=\mathbb{E}
\]
be the canonical homomorphism onto the corresponding quotient field, 
which becomes an ordered field with the ordering induced by the 
natural partial ordering of $\mathscr{B}(\Og,\,\mathbb{R})$.

  \item  Denote by $\left[\varphi\right]$ the coset $J(\varphi)=\varphi+\Ik\in\EE$
of the function 
$\varphi\in\mathscr{B}(\Og,\,\mathbb{R})$, and define the sum 
$\sum_{k}x_{k}=\Sg(\xb)$ of the $\Og$-sequence $\xb\in \S(\Og,\R)$ as the coset $[\fg_{x}]=J(\fg_{\xb})$ of the 
corresponding counting 
function $\fg_{\xb}\in\B(\Og,\R)$.

\end{itemize}

\medskip
Then we have 
\begin{thm}\label{mod}
The ordered field $\EE$ satisfies the axiom \RA, and the transfinite sum
$\Sg:\S(\Og,\R)\to\EE$  satisfies the axioms
\CA, \PA\ and \DA; moreover $\Sg$
can be uniquely extended to a $\R$-linear 
map $\Sg:\S(\Og,\EE)\to\EE$ so as to satisfy the axioms \LA\ and  \SA.
Hence $\EE$ becomes a Euclidean field.
\end{thm}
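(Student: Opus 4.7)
I would verify the axioms in stages: first \LA, \RA, \CA, \PA\ for real-valued sequences, where everything follows directly from the quotient construction, and then use axiom \DA\ to define and validate the extension to Euclidean-valued sequences. For \emph{real sequences}: axiom \RA\ is immediate, since by the definition $\EE=\A(\Og,\R)/\Ik$ every class $[\fg_x]$ equals $\Sg(x)$ for some $x\in\S(\Og,\R)$. Axiom \LA\ follows from the $\R$-linearity of $x\mapsto\fg_x$ and of the quotient map $J$. For \CA, the hypothesis $\sum_{k\fin\ag}x_k\le\sum_{k\fin\ag}y_k$ for $\ag\lincq\bg$ is precisely the pointwise inequality $\fg_x\le\fg_y$ on the cone $C(\bg)$, which generates $\QQ$ and hence lies in $\U$; therefore $[\fg_x]\le[\fg_y]$ in the ordered quotient. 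For \PA, the ideal property of $\Ik$ makes $J$ a ring homomorphism, and pointwise $\fg_x(\ag)\fg_y(\ag)=\sum_{h,k\fin\ag}x_hy_k=\fg_z(\ag)$, where $z$ represents the double sum; hence $[\fg_x][\fg_y]=[\fg_z]$.

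For the \emph{extension to $\S(\Og,\EE)$}: given $\xi\in\S(\Og,\EE)$ supported in $[0,\bg)$, apply \RA\ to write each $\xi_k=\Sg(x^{(k)})$; pick a fixed-point ordinal $\eta=2^\eta>\bg$ also bounding all supports of the $x^{(k)}$; assemble $z\in\S(\Og,\R)$ by setting $z_{\eta\odot k+m}:=x^{(k)}_m$ for $k<\bg$, $m<\eta$, and zero elsewhere; finally define $\Sg(\xi):=\Sg(z)$. This is dictated by \DA, which is therefore realized by construction, and $\R$-linearity is inherited from the linearity of $J$ and of the block assembly. Uniqueness is forced, since \LA, \RA, and \DA\ together determine the value on every $\xi$. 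Axiom \PA\ for the extended sum then reduces to the real case: expanding each $\xi_h,\eta_k$ by \RA\ and applying \DA\ rewrites both sides as single transfinite sums of products of reals, whereupon the real \PA\ concludes.

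The \emph{core obstacle} is well-definedness of $[\fg_z]$: independence from the choice of $\eta$ and of the representatives $x^{(k)}$. By linearity, it suffices to show that if $\fg_{y^{(k)}}\in\Ik$ for every $k<\bg$ (each vanishing on some $Q_k\in\U$) and if $w$ is the corresponding assembly of the $y^{(k)}$, then $\fg_w\in\Ik$. A direct expansion, combined with the completeness sets $D(\eta,k)\in\QQ\subset\U$, rewrites $\fg_w(\ag)$ on a suitable large set as a finite sum $\sum_{k\in K(\ag)}\fg_{y^{(k)}}(\ag_k)$, where $\ag_k$ extracts the $k$-th $\eta$-block of the base-$2$ normal form of $\ag$. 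The main difficulty is to locate a single set $Q\in\U$ on which every relevant block $\ag_k$ lies simultaneously in the corresponding $Q_k$; this is where the fine structure of the Euclidean ultrafilter built in Theorem \ref{thm:figo} enters, specifically via the inclusions $\_{\U(\eta)}\incl\U$, which force the elements of $Q$ to decompose into aligned $\eta$-blocks whose projections sit inside $\U(\eta)$-large sets. The constraint $\bg<\eta$ is then exactly what allows one to shrink within the ultrafilter $\U(\eta)$ on $\Og_\eta$ to a single $\U(\eta)$-set making the block projections hit each $Q_k$ concurrently for $k<\bg$, completing the well-definedness and hence the construction of the Euclidean field.
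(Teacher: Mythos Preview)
Your proposal is correct and follows essentially the same route as the paper: axioms \RA, \LA, \CA, \PA\ for real sequences are verified exactly as you describe (definition, linearity of $x\mapsto\fg_x$, cones $C(\bg)\in\U$, and the ring homomorphism $J$ with the pointwise identity $\fg_x\fg_y=\fg_z$), and the extension to $\S(\Og,\EE)$ is defined by assembling representatives into $\eta$-blocks, exactly as the paper does.

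The one place where the two arguments diverge is the well-definedness check. You propose to decompose a general $\ag$ into several $\eta$-blocks via the completeness sets $D(\eta,k)$ and then align, for all $k<\bg$ simultaneously, the block projections $\ag_k$ into the respective zero-sets $Q_k$. This runs into the problem that $\bg$ need not be finite, so ``shrinking within $\U(\eta)$ to a single $\U(\eta)$-set'' hitting all $Q_k$ at once is not literally an intersection argument. The paper sidesteps this entirely: it restricts to the $\U$-large set $\_U=\{\eta^{h}\odot\cg\mid\cg\in U_h\}\in\_{\U(\eta)}\subset\U$, on which every $\ag$ has a \emph{single} $\eta$-block, so that $\fg_y(\ag)=\sum_{k\fin\cg}x_{hk}$ involves only one index $h$ and only the single condition $\cg\in U_h$. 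No simultaneous alignment and no completeness sets are needed in the well-definedness step; the family $\{U_h\}$ is simply fed into the $\_{\,\cdot\,}$ construction, which by design keeps the $h$-slices separate. Your instinct to invoke $\_{\U(\eta)}\subset\U$ is exactly right; the paper just exploits it more directly.
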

\begin{proof}
 \begin{itemize}   
 \item   The  axiom \RA\ holds by definition.
     \item  
 The axiom \CA\ holds because the ultrafilter $\U$ contains 
    all cones $C(\bg)$.

    \item
    The  map  $J$ is an $\R$-algebra homomorphism, so 
$$(\fg_{\xb}\fg_{\yb})(\ag)=\fg_{\xb}(\ag)\fg_{\yb}(\ag)=
\sum_{h\qincl\ag}x_{h}\sum_{k\qincl\ag}y_{k}=
\sum_{h,k\qincl\ag}x_{h}y_{k}
$$ 
Hence, putting  $z_{i}=
\sum_{h,k\qincl i}x_{h}y_k$, one has $\sum_{i\qincl\ag}z_i=\sum_{h,k\qincl\ag}x_{h}y_{k}$, hence
$$(\sum_{h}x_{h})(\sum_{k}y_{k})=[\fg_{\xb}][\fg_{\yb}]=[\fg_\zb]=\sum_{i}z_{i}=\sum_{h,k} x_h y_k,$$
and the axiom \PA\ holds.

\item The axiom \DA\ holds because the ultrafilter $\U$ contains 
    all sets $D(\eta,h)$, and one has, for $\ag\in D(2^\eta,1)$ and $h,k<\eta$:
    $$h,k\qincl\ag\ \ \Iff\ \ 2^\eta\odot h+k\qincl\ag,\ \ \textrm{hence}\ \ 
    \sum_{h,k\qincl\ag} x_{hk} =\sum_{2^\eta\odot h+k\qincl\ag} x_{hk}.$$

   \item  Now we extend the map $\Sg$ to $\S(\Og,\EE)$ so as to satisfy the 
    axiom \SA.  \\
    Let $\xi_{h}=\sum_{k}x_{hk}$ be given, 
    assume that 
  $x_{hk}=0$ for $h,k\ge \eg,$  
and put

$\sum_{h}\xi_{h}=\sum_{h}\sum_{k} x_{hk}=\sum_{i}y_{i}\ \ \mathrm{with}\ \  y_i $ defined according to \SA. 

Then the axiom \LA\ holds, because  
    the map $\yb\mapsto \fg_{\yb}$ is $\R$-linear, so also the extended definition of $\Sg$ is $\R$-linear, provided it is well defined, \ie\  independent of the representation of $\xi_h$ as a transfinite sum of reals.

%=\begin{cases}
%  x_{hk} \!\!& \text{if }i=\eg^h\odot k>0\\
%   0   \!\!& \text{otherwise}.
%\end{cases}

%and \ \ z_{i}=\sum_{h\vee k=i}x_{hk}.$$
\item  Now let 
$\xi_{h}=\sum_{k}x'_{hk}$ be alternative expressions of the same elements  
$\xi_{h}\in\EE$, assume w.l.o.g. that also  $x'_{hk}=0$ for $h,k\ge \eg,$ and define $y'_{i}$ accordingly. Then, for each 
$h<\eg$ there exists a qualified set $U_{h}\in\U(\eg)$
such that \ \
$\sum_{k\qincl\cg}x_{hk}=\sum_{k\qincl\cg}x'_{hk}$ \ \ for all 
$\cg\in U_{h}$.

Assume \wlg\ that $U_h\incl C(0)$ for all $h\in\Og$, and consider the function $U:h\mapsto U_h\in\U(\eg)$:     the ultrafilter $\U$ 
     contains the filter 
    $ \_{\U({\eg})}$, \ie\ all  sets 
    $$\_U=\{\sum_{h\qincl\cg_0}\eg^h\odot\cg_h\mid\,\forall h\qincl\cg_0\ \cg_h\in U_h\}
$$
So,
for $\ag=\sum_{h\qincl\cg_0}\eg^h\odot\cg_h\in \_{U}$, and $h,k<\eg$, one has $0\qincl\cg_0\in U_0$, and
% there is $h\in\Og$ and $\cg\in U_h$ such that $\ag={\eta^ h}\odot\cg $; it follows that 
$$\eg^h\odot k\qincl\ag \ \ \Iff\ \ (h=0\ \&\ k\qincl\cg_0)\ \vee\ (0<h\qincl\cg_0\ \&\ k\qincl\cg_h),$$
whence
  $$\sum_{i\qincl\ag}y_{i}=
\sum_{h\qincl\cg_0}  \sum_{k\qincl\cg_h} y_{\eg^h\odot k}= \sum_{k\qincl\cg_0}x_{0k}
+\sum_{0\ne h\qincl\cg_0}\big( x_{h0}+x_{hh}+ \sum_{h\ne k\qincl\cg_h} x_{hk}\big)
%+%\sum_{h=k\ne 0}x_{h0}\big)=$$
$$
$$= \sum_{k\qincl\cg_0}x'_{0k}
+\sum_{0\ne h\qincl\cg_0}\big( x'_{h0}+x'_{hh}+ \sum_{h\ne k\qincl\cg_h} x'_{hk}\big)=
 \sum_{h\qincl\cg_0}  \sum_{k\qincl\cg_h} y'_{\eg^h\odot k} =\sum_{i\qincl \ag}y'_{i},
$$  
 and so $\sum_{i}y_{i}=\sum_{i}y'_{i}.$

\end{itemize}  
\qed
\end{proof}

\medskip  

So the existence of the Euclidean field $\EE$ is implied by that 
of Euclidean ultrafilters. 

%In fact it is almost equivalent:\ldots??

\section*{Appendix: \emph{The Euclidean continuum}}\label{app}\ \

In ancient geometry, \emph{lines} and \emph{segments} are
not considered as \emph{sets of points}; on the contrary, 
in the last two centuries, the
reductionistic attitude of modern mathematics has tried and described
the Euclidean geometry through a \emph{set theoretic interpretation}. 
So the\emph{ Euclidean continuum} has
been identified with the \emph{Dedekind continuum} and the Euclidean line
has been identified with (or at least considered \emph{isomorphic} to)
the \emph{set of real numbers} (once
an origin $O$ and a unit segment $OA$ have been fixed). Although this
identification be almost universally accepted today, nevertheless it contradicts
various theorems of the Euclidean geometry.

As an important example, we cite the Euclidean 
statement that a segment $AB$ can be \emph{divided in two congruent segments}
$AM$ and $MB$. If $AB$ is identified with the Dedekind continuum
then, either $AM$ has a maximum or $MB$ has a minumum. Thus
$AM$ and $MB$ are not congruent, hence \emph{stricto sensu} the Dedekind continuum
is not a correct model for the Euclidean continuum. In order to construct
a consistent model, we are forced to assume that the \emph{points} $A,B$
and $M$ \emph{do not belong} to the segment $AB$. So the picture which
comes out of the\emph{ Euclidean  straight line} is a \emph{linearly ordered} set
$\mathfrak{E}$, where the Euclidean segment $AB$ is a \emph{subset} of $\mathfrak{E}$
which \emph{cannot} be identified with the set theoretical segment 
$$S(A,B):=\left\{ X\in\mathfrak{E}\ |\ A<X<B\right\}, \ \mathrm{because}\
M\in S(A,B)\setminus AB.
$$
So we might better think of the segment $AB$ as a set of \emph{atoms} with 
lots of \emph{empty
spaces} between them.

In  the Euclidean theory of proportions, a \emph{set}
of magnitudes can be put in biunique correspondence with the \emph{lengths}
of the segments, and the lengths of segments satisfy the \emph{axiom of Archimedes}.
Hence, assuming also this axiom in
  the set theoretic interpretation of (oriented) segments, they build
a field isomorphic to the real field, so that, after a suitable 
identification, 
\begin{equation}
\mathbb{R}\subset\mathfrak{E}\label{uno}
\end{equation}
and, since $AB\subseteqq S(A,B)\setminus\mathbb{R}$, we can assume
that \ $
AB=S(A,B)\setminus\mathbb{R}.
$

Now take an atom $\mathsf{s}$ in $AB$: the \textit{distance}
between $A$ and $\mathsf{s}$ cannot be \emph{measured }by a 
\emph{length}, because
 $S(A,\mathsf{s})$ is not a segment and no segment is congruent
to $S(A,\mathsf{s})$. So  the\emph{ lengths}
cannot be as many as the \emph{distances}; if the lengths are
in biunique correspondence with $\mathbb{R}$, the distances can
be put in biunique correspondence with the full $\mathfrak{E}$.
Assuming that also the distances form a field, the inclusion (\ref{uno})
implies that $\mathfrak{E}$ is  a \emph{non-Archimedean} field. So the Euclidean
continuum leads to the non-Archimedean geometry as described by Veronese
at the end of the XIX century (see \cite{veronese,veronese2})\footnote{
~Actually Veronese considered also infinitesimal segments, but this
is matter of definitions that will be not discussed here.} 
However there are may non-Archimedean fields which contain the real
numbers. Moreover, every non-Archimedean field has gaps. Thus the 
question arises as to wether a non-Archimedean field provides a more satisfactory
model
of the Euclidean continuum than the Dedekind
continuum. 

In a naive way, a \emph{continuum} is a linearly ordered set without \emph{holes}.
In contrast with our intuition, a set $X$ which satisfies the following
property 
\begin{equation}
\forall a,b\in X,\ a<b,\ \exists c\in X,\ a<c<b\label{eq:13}
\end{equation}
is not a continuum since there are \emph{holes} (think \pes\ to a segment
of rational numbers: here the irrational numbers can be considered
holes). However also the Dedekind continuum is not satsfactory:  
the arguments outlined above yield that
the lenghts form a Dedekind continuum, but there are distances which
are not lenghts. Thus in a sense also the Dedekind continuum contains
holes, represented by the distances which are not lenghts. So we are
tempted to give the following definition which generalizes (\ref{eq:13}): 
\begin{quote}
\emph{a linearly ordered ``set" $X$ is an continuum if given two subsets
$A$ and $B$ such that $\forall a\in A,\ \forall b\in B,\ a<b,$
then} 
\begin{equation}
\exists c\in X,\ a<c<b\label{eq:14}
\end{equation}
\end{quote}
Assuming this definition, a continuum is a \emph{proper class} in the sense of 
Von Neumann-Bernays-G\"odel (NBG) class theory. This is the point of view
of Ehrlich \cite{el12}, where the \emph{class of surreal numbers} is
viewed as the \emph{absolute arithmetic continuum}. In fact, as far as the order
structure is concerned, the surreal numbers have the order type described
by (\ref{eq:14}).

We prefer to have the continuum to be a \emph{set}; so we assume the
existence of an inaccessible ordinal and we give the following definition:
\begin{defn}
\emph{A linearly ordered set $X$ is a Euclidean continuum if it is
$\Omega$-saturated, i.e. given any two subsets $A$ and $B$ such
that $|A|,|B|<|X|=|\Omega|$ and $\forall a\in A,\ \forall b\in B,\ a<b,$
we have that} 
\begin{equation}
\exists c\in X,\ a<c<b\label{eq:14-1}
\end{equation}
\end{defn}

Grounding on this discussion, we are led to assume that the Euclidean line
$\mathfrak{E}$ is an \emph{Euclidean continuum} equipped with the structure
of  \emph{real closed} field, so it is \emph{isomorphic} to the field of Euclidean
numbers $\mathbb{E}$. 

The fact that $\mathfrak{E}$ has to be real
closed is a reasonable request: it has to contain not only all 
Euclidean numbers \emph{stricto sensu},\footnote{~
\Ie\ those constructible by \emph{ruler and compass}.} but also all 
zeroes of sign-changing polynomials.
%\textbf{NON SAREBBE IL CASO DI GIUSTIFICARE BREVEMENTE?}

\smallskip
At this point it seems appropriate to explain why, in our opinion, the
order type of $\mathfrak{E}$ must be $\eta_{\Omega}$ rather that
$\eta_{\alpha}$\footnote{~$\eta_{\alpha}$ is the order type of an
$\alpha$-saturated ordered
set of size $\ag$; assuming the generalized continuum hypotesis, such
sets exist for every regular $\ag$. 
} for some $\alpha<\Omega$.

We sketch two arguments. The first is \emph{set theoretical}. The condition
(\ref{eq:14}) seems implicit in the notion of (absolute) continuum,
but working with proper classes meets several technical limitations, 
which do not arise in workig with inaccessible ordinals; so
it seems appropriate to ``truncate'' the universe at the first 
inaccessible level. In doing this, the Euclidean continuum becomes 
indiscernible from the absolute continuum of Ehlrich, because then 
the class of surreal numbers would
be a field isomorphic to $\mathbb{E}$.

Moreover, taking into account the role of the Euclidean numbers as \emph{numerosities}, the use of a saturated real closed field $\FF$ of accessible cardinality seems inappropriate: it contradicts  the natural assumption that a  \emph{powerset}, or a set of f\emph{unctions}, has a numerosity in $\FF$ whenever the original \emph{set} (the \emph{domain}) has a numerosity in $\FF$.

The second argument is \emph{``geometric"}. The assumption  $|\mathfrak{E}|=\Omega$ 
yields $\mathfrak{E}\cong\mathbb{E}$, and hence it inherits a very rich
structure. In particular, every Euclidean number being a transfinite sum
of reals, one has that any \emph{distance}
is a transfinite (algebraic) sum of \emph{lengths}.
For instance
\begin{equation}
\sum_{k\in[0,\omega)}\,\frac{1}{2^{k}}=2-\frac{1}{2^{\omega-1}}\label{eq:martina}
\end{equation}
(see equation (\ref{eq:marta}) in subsection \ref{hyperf}) so adding infinitely many segments of lenght $2^{-k}$
cannot provide a segment of length $2$, but  only a quantity (a \emph{distance})
which is (infinitesimally)  smaller than $2$. In the Euclidean world, Achilles
will never properly reach the turtle, remaining forever at infinitesimal
distance (these issues have been discussed also in \cite{BF16,BF19}). Assuming that in the physical world infinitesimal
quantities cannot be measured and  so do not count, the turtle is reached. 
In our opinion,
this joke might enphasize the fact that non-Archimedean fields
(and, in the future, hopefully also the Euclidean field) might be very useful
in building models of natural phenomena (see some examples in
\cite{A,ultra,BHW13,belu2012,BHW16}).

\bigskip{}

\bibliographystyle{asl}

\end{document}